\newtheorem{theorem}{Theorem}
\newtheorem{lemma}{Lemma}
\newtheorem{corollary}{Corollary}
\newtheorem{proposition}{Proposition}
\theoremstyle{definition}
\newtheorem{definition}{Definition}
\theoremstyle{remark}
\newtheorem{assumption}{Assumption}
\newtheorem{remark}{Remark}
\newcommand{\cZ}{\mathcal{Z}}
\newcommand{\cA}{\mathcal{A}}
\newcommand{\cU}{\mathcal{U}}
\newcommand{\cL}{\mathcal{L}}
\newcommand{\cS}{\mathcal{S}}
\newcommand{\cK}{\mathcal{K}}
\newcommand{\cB}{\mathcal{B}}
\newcommand{\seq}{\textsc{Seq}}
\newcommand{\nti}{n\to\infty}
\newcommand{\eps}{\varepsilon}
\renewcommand{\(}{\left(}
\renewcommand{\)}{\right)}
\newcommand{\zTheta}[1] {\mathrm{\Theta}\left(#1\right)}
\newcommand{\BigO}[1] {O\left(#1\right)}
\newcommand{\smallO}[1] {o\left(#1\right)}
\title{Enumerating Lambda Terms by Weighted Length of Their De Bruijn Representation}
\thanks{A preliminary version of this work appeared in the proceedings of STACS 2016.}
\thanks{The work was supported by FWF grant SFB F50-03.}
\author{Olivier Bodini \and Bernhard Gittenberger \and Zbigniew Go{\l}\k{e}biewski}
\address{
Laboratoire d'Informatique de Paris-Nord, Université de Paris-Nord, 99, avenue
Jean-Baptiste Clément, 93430 Villetaneuse, France.}
\email{olivier.bodini@lipn.univ-paris13.fr}
\address{
Institute for Discrete Mathematics and Geometry, Technische Universit\"at Wien, 
Wiedner Hauptstra\ss e 8-10/104, A-1040 Wien, Austria.} 
\email{gittenberger@dmg.tuwien.ac.at}
\address{
Department of Computer Science, Wroc\l aw University of Science and Technology, ul. Wybrzeze
Wyspianskiego 27, 50-370, Wroc\l aw, Poland.}
\email{zbigniew.golebiewski@pwr.edu.pl}
\keywords{lambda term; asymptotic enumeration; generating function; infinitely nested radical;
Boltzmann sampling}
\begin{document}

\begin{abstract}
John Tromp introduced the so-called 'binary lambda calculus' as a way to encode lambda terms in
terms of 0-1-strings using the de Bruijn representation along with a weighting scheme. 
Later, Grygiel and Lescanne conjectured that the number of binary lambda terms with $m$ free
indices and of size $n$ (encoded as binary words of length $n$ and according to Tromp's weights) is $\smallO{n^{-3/2} \tau^{-n}}$ for $\tau \approx 1.963448\ldots$. 
We generalize the proposed notion of size and show that for several classes of lambda
terms, including binary lambda terms with $m$ free indices, the number of terms of size $n$ is 
$\zTheta{n^{-3/2} \rho^{-n}}$ with some class dependent constant $\rho$, which in particular
disproves the above mentioned conjecture. 

The methodology used is setting up the generating functions for the classes of lambda terms. These
are infinitely nested radicals which are investigated then by a singularity analysis. 

We show further how some properties of random lambda terms can be analyzed and present a way to
sample lambda terms uniformly at random in a very efficient way. This allows to generate terms of
size more than one million within a reasonable time, which is significantly better than the
samplers presented in the literature so far.
\end{abstract}

\maketitle

\section{Introduction}

The objects of our interest are lambda terms, which are the basic objects of lambda calculus. For
a thorough introduction to lambda terms and lambda calculus we refer to \cite{Ba84}. This paper
will not deal with lambda calculus and no understanding of lambda calculus is needed to follow our
proofs. We will instead be interested in the enumeration of lambda terms, the study of some
properties of random lambda terms and the efficient generation of terms of a given size uniformly
at random. 

A lambda term is a formal expression which is described by the grammar 
$M \, ::= \, x \, | \, \lambda x . M \, | \, (M \, M)$ 
where $x$ is a variable, the operation $(M \, M)$ is called application, and using the quantifier $\lambda$ is called abstraction.
In a term of the form $\lambda x . M$ each occurrence of $x$ in $M$ is called a bound variable.
We say that a variable $x$ is free in a term $M$ if it is not in the scope of any abstraction.
A term with no free variables is called closed, otherwise open. 
Two terms are considered equivalent if they are identical up to
renaming of the variables, \emph{i.e.}, more formally speaking, they can be
transformed into each other by $\alpha$-conversion. We shall always mean 'equivalence
class w.r.t. $\alpha$-conversion' whenever we write 'lambda term'.   

In this paper we are interested in counting lambda terms whose size corresponds to their De
Bruijn representation (\emph{i.e.}, what was called 'nameless expressions' in  
\cite{deBruijn:IndMath:1972}).

\begin{definition}
A De Bruijn representation is a word described by the following specification:
\[
M \, ::= \, n \, | \, \lambda M \, | \, M \, M 
\]
where $n$ is a positive integer, called a De Bruijn index. 
Each occurrence of a De Bruijn index is called a variable and each
$\lambda$ an abstraction. A variable $n$ of a De Bruijn representation $w$ 
is bound if the prefix of $w$
which has this variable as its last symbol contains at least $n$ times the symbol $\lambda$,
otherwise it is free. The abstraction which binds a variable $n$ is the $n$th $\lambda$ before the
variable when parsing the De Bruijn representation from that variable $n$ backwards to the first symbol. 
\end{definition}

For the purpose of the analysis we will use the notation consistent with the one used in~\cite{BGLZ16}. 
This means that the variable $n$ will be represented as a sequence of $n$ symbols, namely as a
string of $n-1$ so-called 'successors' $S$ and a so-called 'zero' $0$ at the end. Obviously, there
is a one to one correspondence between equivalence classes of lambda terms (as described in the
first paragraph) and De Bruijn representations. 
For instance, the De Bruijn representation of the lambda term 
$\lambda x . \lambda y . x y$ (which is e.g. equivalent to $\lambda a . \lambda b . a b$ or
$\lambda y . \lambda x . y x$) is $\lambda \lambda 2 1$; using the notation with successors this
becomes $\lambda \lambda ((S 0) 0)$. 

Since we are interested in counting lambda terms of given size we have to specify what we mean by
'size': We use a general 
notion of size which covers several previously studied models from the literature. The 
building blocks of lambda terms,  zeros, successors, abstractions and applications, contribute 
$a, b, c$ and $d$, respectively, to the total size of a lambda term. 
Formally, if $M$ and $N$ are lambda terms, then 
\[
|0|=a, \qquad | S n | = |n| + b, \qquad |\lambda M| = |M| + c, \qquad |M N| = |M| + |N| + d.
\]

Thus for the example given above we have 
$| \lambda \lambda ((S 0) 0) | = 2 a + b + 2 c + d.$ 
Assigning sizes to the symbols like above covers several previously introduced notions of size:
\begin{itemize}
	\item so called 'natural counting' (introduced in~\cite{BGLZ16}) where $a = b = c = d = 1$,
	\item so called 'less natural counting' (introduced in~\cite{BGLZ16}) where $a = 0, b = c = 1, d = 2$.
 	\item binary lambda calculus (introduced in~\cite{tromp:DSP:2006:628}) where $b = 1, a = c = d = 2$,
\end{itemize}

\begin{assumption}\label{i:ass:1}
Throughout the paper we will impose the following assumptions on the constants $a, b, c, d$:

\noindent
\begin{tabular}{llll}
{\bf 1.} $a,b,c,d$ are nonnegative integers, & {\bf 2.} $a+d \geq 1$, &
{\bf 3.} $b, c \geq 1$, & {\bf 4.} $\gcd(b,c,a+d) = 1$.
\end{tabular}
\end{assumption}

If the zeros and the applications both had size $0$ (\emph{i.e.} $a+d=0$), then we would have
infinitely many terms of the given size, because one could insert arbitrarily many applications
and zeros into a term without increasing its size. If the successors or the abstractions had size
$0$ (\emph{i.e.} $b$ or $c$ equals to $0$), then we would again have infinitely many terms of
given size, because one could insert arbitrarily long strings of successors or abstractions into
a term without increasing its size. The last assumption is more technical in its nature. It
ensures that the generating function associated with the sequence of the number of lambda terms
will have exactly one singularity on the circle of convergence, which is on the positive real
line. The case of several singularities is not only technically more complicated, but it is for
instance not even \emph{a priori} clear which singularities are important and which are
negligible. So we cannot expect that it differs from the single singularity case only by a
multiplicative constant.

We mention that in \cite{GL13} lambda terms with size function corresponding to $a=b=0$ and $c=d=1$ were considered, but another restriction was imposed on the terms.  

\subsubsection*{Historical remarks.} Of course, the enumeration of combinatorial structures or
the study of random structures is of interest in its own right. However, there is rising interest
in enumeration problems related to structures coming from logic. One of the first works in such
a direction were the investigation of random Boolean formulas~\cite{bool0a,bool0b,W97b} and the
counting of finite models~\cite{W97a}. The topic was resumed later, studying those random Boolean
formulas under different aspects~\cite{bool1,bool2a,bool2b}, analogous formulas for other
logical models~\cite{tautcount,bool3,bool4}, studying the number of tautologies~\cite{truth} or
the number of proofs in propositional logic~\cite{proofcount}, comparing logical
systems~\cite{GK12}, comparing size notions of Boolean trees~\cite{boolsize}, or applying results
in that domain to satisfiability~\cite{sat1}. In \cite{sat2} combinatorial enumeration of graphs
was applied to study satisfiability problem and some generalizations.

The to our knowledge first enumerative investigation of lambda terms was performed in
\cite{DGKRTZ10}. Later, particular classes of lambda terms like linear and affine terms have been
enumerated~\cite{bcibck,bci_in_bck}. The random generation of terms was for instance treated
in~\cite{bcibck,GL13}. Parts of these results have been extended to more general classes of lambda
terms~\cite{bci_gen}. Lambda terms related to combinatory logic were studied
in~\cite{BGZ15,tromp:DSP:2006:628} where \cite{BGZ15} also investigates the question on how many
normalizing lambda term there are among all terms. As opposed to the above mentioned results,
where lambda terms were interpreted as graphs and their size is then the number of vertices of
their graph representation, \cite{tromp:DSP:2006:628} introduced a size concept related to the
De Bruijn representation of lamda terms. The resulting enumeration problem is then different and
also simpler when approaching it with generating functions. The reason is there are much fewer
terms of a given size than in the graph interpretation, making the generating function analytic
around the origin and therefore amenable to analytic methods. Several variation of this model
have been studied, see~\cite{BGLZ16,GLBinLT,GL13,Le13}.

Based on numerical experiments Grygiel and Lescanne~\cite{GLBinLT,GL13} conjectured that the
growth rate of the counting sequence is smaller than it is for typical tree-like structures,
which is of the form $\alpha^n n^{-3/2}$ for some positive $\alpha$. In~\cite{GG16}, however, a
lower and an upper bound for the numbers were derived which disproved this conjecture. These two
bounds were derived for the whole class of models we defined via the general size notion above
and for concrete models like Tromp's~\cite{tromp:DSP:2006:628} binary lambda terms, they proved
to be of the same order of magnitude, being of the form $\alpha^n n^{-3/2}$, and very close to
each other:  Indeed the difference of the multiplicative constants is less than $10^{-7}$. This
suggests that in fact asymptotic equivalence to $C \alpha^n n^{-3/2}$ for some constant $C$ holds,
though theoretically there could be very small oscillations. Examples of periodic oscillations
are frequently found in number theory and also combinatorial problems some exhibit surprising
periodicities, for example the mathematical puzzle of group Russian roulette described by
Winkler~\cite{W04} and recently analyzed by van de Brug {\em et al.}~\cite{BKM17}. Further
examples, including some with very small oscillations, were discussed in \cite{Pr06}.

\subsubsection*{Notations.} 
We introduce some notations which will be frequently used throughout
the paper: If $p$ is a polynomial, then $\mathrm{RootOf}\left\{p\right\}$ will denote the
smallest positive root of $p$. Moreover, we will write $[z^n]f(z)$ for the $n$th coefficient of
the power series expansion of $f(z)$ at $z=0$ and $f(z) \prec g(z)$ (or $f(z) \preceq g(z)$) to
denote that $\left[z^n\right] f(z) < \left[z^n\right] g(z)$ (or $\left[z^n\right] f(z) \leq
\left[z^n\right] g(z)$) for all integers $n$. 

\subsubsection*{Plan of the paper and results.} 
The first aim of this paper is the asymptotic enumeration of closed lambda terms of given size, as
the size tends to infinity. In the next section we define several classes of lambda terms
as well as the generating function associated with them and present the enumeration result for
those classes. We derive the asymptotic equivalent of the number of closed terms of given size up
to a constant factor. 

Having solved the basic enumeration problem one can ask then for further parameters. 
As an example, we pick the \emph{number of abstractions in a term} in Section~\ref{q_abstractions}
and compute the asymptotic number of lambda terms containing an \emph{a priori} fixed number of
abstractions as well as the asymptotic number of terms with a bounded number of abstractions. 

Terms with are not reducible by so-called $\beta$-reduction (=terms in normal form) play an
important role in lambda calculus. In Section~\ref{normal} we show that asymptotically almost no
term is in normal form and quantify the decay of the fraction of terms in normal form. In fact,
the fraction decays exponentially fast. 

The final Section~\ref{Boltzmann} is devoted to the uniform random generation of lambda terms. We 
exploit the fast convergence in our asymptotic results to construct a Boltzmann sampler which is
based on an adapted rejection method. This sampler is as efficient as the samplers for generating
trees and outranges all existing samplers for lambda terms. On a standard laptop terms of size more
than one million can easily be generated in a couple of minutes.

\section{The asymptotic number of lambda terms}
In order to count lambda terms of a given size we set up a formal equation which is then
translated into a functional equation for generating functions. For this we will utilise the 
symbolic method developed in~\cite{Flajolet:2009:AC:1506267}. 

Let us introduce the following atomic classes, \emph{i.e.}, they consist of one single element: the class of zeros $\cZ$, the class of successors
$\cS$, the class of abstractions $\cU$ and the class of applications $\cA$. 
Then the class $\cL_{\infty}$ of lambda terms can be described as follows:
\begin{equation}\label{mr:eq:0}
\cL_{\infty} = \seq (\cS) \times \cZ \;\cup\; \cU  \times \cL_{\infty} \;\cup\; \cA \times
\cL_{\infty}^2.
\end{equation}
This specification of the set of all lambda terms can be seen as follows: A lambda term is either
a De Bruijn index, which is a sequence of symbols $S$ followed by a zero, or a pair built from an
abstraction (= the element of $\cU$) and a lambda term or two lambda terms concatenated by the 
application symbol between them, where we write them as a triple made of the application symbol in
the first component and the two lambda terms in the second and third component.

The number of lambda terms of size $n$, denoted by $L_{\infty,n}$, is $\left|\left\{t \in
\cL_{\infty}: |t|=n \right\}\right|$.
Let $L_{\infty}(z) = \sum_{n \geq 0} L_{\infty,n} z^n$ be the generating function associated with
$\cL_{\infty}$. 
Then specification~\eqref{mr:eq:0} gives rise to a functional equation for the generating function $L_{\infty}(z)$:
\begin{equation}\label{mr:eq:1}
L_{\infty}(z) = z^a \sum_{j=0}^{\infty} z^{bj} + z^c L_{\infty}(z) + z^d L_{\infty}(z)^2.
\end{equation}
Solving \eqref{mr:eq:1} we get
\begin{equation}\label{mr:eq:2}
L_{\infty}(z) = \frac{1 - z^c - \sqrt{(1-z^c)^2 - \frac{4 z^{a+d}}{1-z^b}}}{2 z^d}, 
\end{equation}
which defines an analytic function in a neighbourhood of $z=0$. 

\begin{proposition}\label{i:prop:1}
Let $\rho$ be the smallest positive root of $(1-z^b)(1-z^c)^2 - 4 z^{a+d}$. Then, as $z\to\rho$ in
such a way that $\arg(z-\rho)\neq 0$, the function $L_{\infty}(z)$ admits the local expansion 
\begin{equation}\label{mr:eq:3}
L_{\infty}(z) = a_{\infty} + b_{\infty} \left(1 - \frac{z}{\rho}\right)^{\frac12} + \BigO{\left|1
- \frac{z}{\rho}\right|} 
\end{equation}
for some constants $a_{\infty} > 0, b_{\infty} < 0$ that depend on $a,b,c,d$.
\end{proposition}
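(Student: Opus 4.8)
The plan is to obtain the expansion \eqref{mr:eq:3} by analysing the explicit formula \eqref{mr:eq:2} near the dominant singularity. Write $L_\infty(z) = \frac{1-z^c - \sqrt{R(z)}}{2z^d}$ where $R(z) = (1-z^c)^2 - \frac{4z^{a+d}}{1-z^b} = \frac{(1-z^b)(1-z^c)^2 - 4z^{a+d}}{1-z^b} = \frac{P(z)}{1-z^b}$, with $P(z) = (1-z^b)(1-z^c)^2 - 4z^{a+d}$. The first step is to locate $\rho$: I would check that $P$ has a positive root strictly inside $(0,1)$ by observing $P(0) = 1 > 0$ and $P(1) = -4 < 0$ (using $b,c\ge 1$ and $a+d\ge 1$), so by continuity there is a smallest positive root $\rho = \mathrm{RootOf}\{P\}$ with $0 < \rho < 1$. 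In particular $1 - \rho^b \neq 0$, so the denominators $1-z^b$ and $2z^d$ are analytic and nonzero at $z = \rho$ (note $\rho > 0$ so $z^d$ causes no trouble even if $d=0$), and the only obstruction to analyticity of $L_\infty$ there is the square root.

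The second step is the local behaviour of $R$ at $\rho$. Since $P(\rho) = 0$, write $P(z) = P'(\rho)(z-\rho) + O((z-\rho)^2)$; provided $P'(\rho) \neq 0$ (a simple root), we get $R(z) = \frac{P'(\rho)}{1-\rho^b}(z - \rho) + O((z-\rho)^2)$, and hence, pulling out the factor $\rho$,
\[
R(z) = -\frac{\rho\, P'(\rho)}{1-\rho^b}\left(1 - \frac{z}{\rho}\right) + O\!\left(\left|1-\frac z\rho\right|^2\right).
\]
For the square root $\sqrt{R(z)}$ to have the stated $(1-z/\rho)^{1/2}$ behaviour with a real branch for $z \in (0,\rho)$, we need the leading coefficient $-\rho P'(\rho)/(1-\rho^b)$ to be positive; equivalently, since $0 < \rho < 1$ forces $1 - \rho^b > 0$, we need $P'(\rho) < 0$. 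This holds because $P(0) > 0$ and $\rho$ is the \emph{smallest} positive root, so $P$ changes sign from $+$ to $-$ there (and the root is simple for the same reason, modulo ruling out higher-order tangency — one argues $P'(\rho)\le 0$ immediately, and $P'(\rho)=0$ is excluded because then $P$ would not cross zero, contradicting minimality combined with $P(1)<0$; a clean way is to note $P$ restricted to $[0,1]$ first hits $0$ at $\rho$ coming down from $P(0)=1$, so the left derivative is $\le 0$, and if it were $0$ one needs a short argument that $P''(\rho)<0$ is impossible or else inspects the next term — alternatively invoke that $(1-z^b)(1-z^c)^2$ is decreasing and $4z^{a+d}$ increasing on a suitable interval to get strict monotonicity of $P$ near $\rho$). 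Granting $P'(\rho) < 0$, set $\kappa = \sqrt{-\rho P'(\rho)/(1-\rho^b)} > 0$, so $\sqrt{R(z)} = \kappa \left(1 - \frac z\rho\right)^{1/2}\sqrt{1 + O(|1-z/\rho|)} = \kappa\left(1-\frac z\rho\right)^{1/2} + O(|1-z/\rho|)$ as $z\to\rho$ with $\arg(z-\rho)\neq 0$ (the branch of the inner square root is analytic near $1$).

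The final step is to substitute back. Since $\frac{1-z^c}{2z^d}$ is analytic at $\rho$, its Taylor expansion contributes $a_\infty := \frac{1-\rho^c}{2\rho^d} + O(|1-z/\rho|)$ to the constant and error terms, and the singular part is $-\frac{\sqrt{R(z)}}{2z^d} = -\frac{\kappa}{2\rho^d}\left(1-\frac z\rho\right)^{1/2} + O(|1-z/\rho|)$. Thus \eqref{mr:eq:3} holds with $a_\infty = \frac{1-\rho^c}{2\rho^d}$ and $b_\infty = -\frac{\kappa}{2\rho^d} = -\frac{1}{2\rho^d}\sqrt{\frac{-\rho P'(\rho)}{1-\rho^b}} < 0$. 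It remains to confirm $a_\infty > 0$: since $0 < \rho < 1$ and $c \ge 1$ we have $1 - \rho^c > 0$ and $\rho^d > 0$, so indeed $a_\infty > 0$. (One may also identify $a_\infty = L_\infty(\rho)$ as the limit from the left.) I expect the main obstacle to be the rigorous justification that $\rho$ is a \emph{simple} root of $P$, i.e.\ $P'(\rho) \neq 0$ — without this the exponent in the expansion could differ — and secondarily checking the sign conditions that make the square root real on $(0,\rho)$; both should follow from elementary monotonicity estimates on $[0,\rho]$ using Assumption~\ref{i:ass:1}, but they need to be stated carefully rather than waved through.
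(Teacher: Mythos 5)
Your proposal is correct and follows essentially the same route as the paper: analyse the explicit formula \eqref{mr:eq:2}, locate $\rho\in(0,1)$ as the smallest positive zero of the radicand's numerator, verify it is a simple zero, and read off the square-root expansion. The one point you flag as a potential obstacle --- that $P'(\rho)\neq 0$ --- is settled in the paper exactly by the monotonicity argument you sketch at the end: $P'(z) = -4(a+d)z^{a+d-1} - 2cz^{c-1}(1-z^b)(1-z^c) - bz^{b-1}(1-z^c)^2$ is a sum of three strictly negative terms on $(0,1)$ under Assumption~\ref{i:ass:1}, so $P$ is strictly decreasing there and no higher-order tangency can occur.
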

\begin{proof}
Let $f(z) = (1-z^b)(1-z^c)^2 - 4 z^{a+d}$. Then $\rho$ is the smallest positive solution of $f(z)
= 0$. If we compute derivative $f'(z) = -4 (a + d) z^{a+d-1} - 2 c z^{c-1} (1 - z^b) (1 - z^c)
- b z^{b-1} (1 - z^c)^2$ we can observe that all three terms are negative for $0 < z < 1$.  Since
$0 < \rho < 1$, the function $f(z)$ does not have a double root at $\rho$ and thus $L_{\infty}(z)$
has an algebraic singularity of type $\frac12$ which means that its Newton-Puiseux expansion is
of the form~(\ref{mr:eq:3}).

Since $L_{\infty}(z)$ is a power series with positive coefficients, we know that $a_{\infty} =
L_{\infty}(\rho) > 0$ and $b_{\infty} < 0$. 
\end{proof}

\begin{corollary}
The coefficients of $L_\infty(z)$ satisfy $[z^n]L_\infty(z)\sim C\rho^{-n}n^{-3/2}$, as $\nti$,
where $C=-b_\infty /(2\sqrt\pi)$. 
\end{corollary}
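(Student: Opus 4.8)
The plan is to apply the standard transfer theorem for singularity analysis (in the style of Flajolet--Odlyzko, see \cite{Flajolet:2009:AC:1506267}) to the local expansion established in Proposition~\ref{i:prop:1}. First I would argue that $\rho$ is the unique dominant singularity of $L_\infty(z)$, i.e.\ the only singularity on the circle $|z|=\rho$. Since $L_\infty(z)$ has nonnegative coefficients, Pringsheim's theorem already guarantees that $\rho$ is a singularity; what remains is to rule out other singularities on $|z|=\rho$. This is precisely where Assumption~\ref{i:ass:1} enters, in particular the condition $\gcd(b,c,a+d)=1$: one shows that the radicand $(1-z^c)^2 - 4z^{a+d}/(1-z^b)$ in~\eqref{mr:eq:2}, equivalently $f(z)=(1-z^b)(1-z^c)^2 - 4z^{a+d}$ divided by $1-z^b$, does not vanish and the pole $1-z^b$ does not occur for $|z|=\rho$ with $z\neq\rho$; the gcd condition forbids the periodicity that would be needed for a second root of the same modulus. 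I expect this aperiodicity step to be the main obstacle, since it requires a careful look at when $f(z)=0$ can have several roots of equal modulus.

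Next, given the single dominant singularity, I would invoke Proposition~\ref{i:prop:1}, which says that near $z=\rho$ (with $\arg(z-\rho)\neq 0$) we have
\begin{equation*}
L_\infty(z) = a_\infty + b_\infty\left(1-\frac{z}{\rho}\right)^{1/2} + \BigO{\left|1-\frac{z}{\rho}\right|},
\end{equation*}
with $a_\infty>0$ and $b_\infty<0$. The constant term $a_\infty$ and the $\BigO{1-z/\rho}$ remainder contribute only exponentially small or $O(\rho^{-n}n^{-2})$ terms to $[z^n]L_\infty(z)$, so the asymptotically dominant contribution comes from the half-integer power. I would check that $L_\infty(z)$ continues analytically to a suitable $\Delta$-domain (a pac-man shaped region around $\rho$ minus a cone at $\rho$), which follows because the only obstruction inside a slightly larger disk is the branch point at $\rho$ itself, the factor $1-z^b$ being nonzero there, and by the first step no other singularity lies on $|z|=\rho$.

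Then I would apply the transfer lemma: $[z^n](1-z/\rho)^{1/2} \sim \rho^{-n} n^{-3/2}/(2\sqrt\pi)$ (using $\Gamma(-1/2)=-2\sqrt\pi$), so that
\begin{equation*}
[z^n]L_\infty(z) \sim b_\infty\,\frac{\rho^{-n}n^{-3/2}}{2\sqrt\pi} + \BigO{\rho^{-n}n^{-2}} = -\frac{b_\infty}{2\sqrt\pi}\,\rho^{-n}n^{-3/2},
\end{equation*}
which is the claimed asymptotics with $C = -b_\infty/(2\sqrt\pi) > 0$ since $b_\infty<0$. The only remaining routine points are verifying that the error terms transfer with the stated orders and that the remainder term in~\eqref{mr:eq:3} is itself amenable to singularity analysis (it is $\BigO{|1-z/\rho|}$ on the whole $\Delta$-domain, not merely at $\rho$), both of which are standard once the $\Delta$-analyticity is in place.
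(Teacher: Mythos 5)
Your proposal is correct and follows exactly the route the paper (implicitly) takes: the corollary is stated without proof as a direct application of the Flajolet--Odlyzko transfer theorem to the local expansion of Proposition~\ref{i:prop:1}, with condition 4 of Assumption~\ref{i:ass:1} guaranteeing the unique dominant singularity on $|z|=\rho$ and hence the required $\Delta$-analyticity. One small slip to fix: the transfer lemma gives $[z^n](1-z/\rho)^{1/2}\sim \rho^{-n}n^{-3/2}/\Gamma(-1/2)=-\rho^{-n}n^{-3/2}/(2\sqrt{\pi})$, so the minus sign dropped in your intermediate display is needed to arrive at your (correct) final expression $C=-b_\infty/(2\sqrt{\pi})>0$.
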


\begin{remark}
The two constants from Proposition~\ref{i:prop:1} are 
\[
a_\infty=\frac{1-\rho^c}{2\rho^d} \qquad\text{ and }\qquad
b_\infty=\frac{\sqrt{4 (a + d) \rho^{a+d} - 2 c \rho^c (1 - \rho^b) (1 - \rho^c)
- b \rho^b (1 - \rho^c)^2}}{2\rho^d\sqrt{1-\rho^b}}.
\]
\end{remark}

Let us define the class of $m$-open lambda terms, denoted $\cL_m$, as
\[
\cL_{m} = \left\{t \in \cL_{\infty}: \textrm{a prefix of } m \textrm{ abstractions } \lambda
\textrm{ makes $t$ a closed term}\right\}.
\]
We remark that any $m$-open lambda term is obviously $(m+1)$-open as well. 
The number of $m$-open lambda terms of size $n$ is denoted by $L_{m, n}$ and the generating
function associated with the class by $L_{m}(z) = \sum_{n \geq 0} L_{m,n} z^n$. 
Similarly to specification \eqref{mr:eq:0} for $\cL_{\infty}$, the class $\cL_{m}$ can be
specified as
\begin{equation} \label{m_open_def}
\cL_m = \seq_{\le m-1} (\cS) \times \cZ \;\cup\; \cU  \times \cL_{m+1} \;\cup\; \cA \times
\cL_m^2, 
\end{equation} 
and this specification yields the functional equation 
\begin{equation}\label{mr:eq:5}
L_{m}(z) = z^a \sum_{j=0}^{m-1} z^{bj} + z^c L_{m+1}(z) + z^d L_{m}(z)^2
\end{equation}
for the associated generating function. 
Note that $L_0(z)$ is the generating function of the set $\mathcal{L}_0$ of all 
closed lambda terms.

Now, we are able to state one of the main results of the paper:

\begin{theorem}\label{thm:1}
Let $\rho$ be the smallest positive root of $(1-z^b)(1-z^c)^2 - 4 z^{a+d}$. Then there exists a 
positive constant $C$ (depending on $a, b, c, d$ and $m$) such that the number of $m$-open lambda terms of size $n$ satisfies
\begin{equation} \label{mainresult}
\left[z^n\right] L_{m}(z) \sim C n^{-\frac32} \rho^{-n}, \text{ as } \nti. 
\end{equation}
\end{theorem}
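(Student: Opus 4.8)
The plan is to show that the system $(L_m(z))_{m\ge 0}$ has the same dominant singularity $\rho$ as $L_\infty(z)$ and that each $L_m$ inherits a square-root singularity there, with the singular behaviour~\eqref{mr:eq:3} essentially propagating down the chain~\eqref{mr:eq:5}. First I would establish the elementary comparison $L_m(z)\preceq L_{m+1}(z)\preceq L_\infty(z)$ coefficientwise, which is immediate from the set inclusions $\cL_m\subseteq\cL_{m+1}\subseteq\cL_\infty$; in particular every $L_m$ has radius of convergence at least $\rho$. For the reverse direction one needs a lower bound showing the radius is \emph{exactly} $\rho$: since $\cL_m$ contains all closed terms and, more concretely, $L_m(z)\succeq L_0(z)$, it suffices to show $L_0$ already has radius of convergence $\rho$. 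This I would get by a sandwiching argument — e.g. comparing $L_0$ with a suitably ``truncated'' family or by noting that $\cL_0$ contains enough terms (closing off any term of $\cL_\infty$ by prepending sufficiently many $\lambda$'s gives an injection that only shifts size by a bounded-per-term amount, but one must be careful because the number of prepended abstractions is unbounded) — the cleanest route is probably to iterate~\eqref{mr:eq:5}: writing $L_m = z^c L_{m+1} + \cdots$ and unfolding, one sees $L_m(z) = \sum_{k\ge 0} z^{ck}(\text{stuff involving } L_{m+k} \text{ and its square}) $, and a direct combinatorial or generating-function comparison with $L_\infty$ pins the radius.

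The technical heart is the singularity analysis of the infinite system. The idea is to treat $L_{m+1}$ in~\eqref{mr:eq:5} as a known quantity and solve the quadratic for $L_m$:
\[
L_m(z) = \frac{1 - \sqrt{1 - 4 z^d\left(z^a\tfrac{1-z^{bm}}{1-z^b} + z^c L_{m+1}(z)\right)}}{2 z^d}.
\]
Then I would argue by downward induction (or rather, by a fixed-point/bootstrapping argument, since the recursion goes \emph{upward} in $m$) that if $L_{m+1}$ has the local expansion $a_{m+1} + b_{m+1}(1-z/\rho)^{1/2} + O(|1-z/\rho|)$ with $b_{m+1}<0$, then the radicand above is of the form $g(z) + h(z)(1-z/\rho)^{1/2} + \cdots$ with $g(\rho) \ge 0$; the key point is to check that $g(\rho) > 0$ strictly, i.e. that the inner square root does not itself vanish at $\rho$, so that $L_m$ picks up a square-root singularity of the \emph{same} type (a composition $\sqrt{\text{analytic} + \text{const}\cdot\sqrt{1-z/\rho}}$ with nonzero constant term expands again as $c_0 + c_1(1-z/\rho)^{1/2}+\cdots$). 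Knowing $a_\infty, b_\infty$ from the Remark and the monotonicity $L_m\preceq L_\infty$ should give $0 < a_m \le a_\infty < 1/(2\rho^d \cdot \text{(bound)})$, keeping the radicand bounded away from $0$; and $b_m<0$ follows since $L_m$ has nonnegative coefficients and a square-root singularity. Once the expansion $L_m(z) = a_m + b_m(1-z/\rho)^{1/2} + O(|1-z/\rho|)$ is established with $b_m<0$, standard singularity analysis (transfer theorems from~\cite{Flajolet:2009:AC:1506267}, exactly as in the Corollary following Proposition~\ref{i:prop:1}) yields $[z^n]L_m(z)\sim C n^{-3/2}\rho^{-n}$ with $C = -b_m/(2\sqrt\pi) > 0$.

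The main obstacle is making the bootstrapping rigorous: the functional equation~\eqref{mr:eq:5} expresses $L_m$ in terms of $L_{m+1}$, so there is no ``base case'' to induct from — one must instead show directly that $L_m$ has a square-root singularity at $\rho$ for every $m$, uniformly enough to control the expansion. I would handle this by working with the differences $D_m(z) := L_\infty(z) - L_m(z) \succeq 0$, which satisfy a linear-type recurrence obtained by subtracting~\eqref{mr:eq:1} from~\eqref{mr:eq:5}; showing $D_m(z) = O(|1-z/\rho|^{1/2})$ near $\rho$ (so that $L_m$ inherits $a_\infty$'s leading constant up to lower order, hence the singularity type) reduces to a contraction estimate for the map governing the $D_m$, exploiting that the relevant multiplier $2 z^d L_\infty(z) = 1 - z^c - \sqrt{\cdots}$ has modulus $<1$ for $z$ near $\rho$ away from the real axis. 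Alternatively, and perhaps more robustly, one can invoke a general theorem on infinite systems of algebraic equations / catalytic-variable recursions guaranteeing a common square-root singularity; the excerpt's framing suggests the authors have such a uniform analytic argument in mind, and I would structure the proof around establishing the needed uniform bounds on $(a_m, b_m)$ via the monotonicity sandwich $L_0 \preceq L_m \preceq L_\infty$.
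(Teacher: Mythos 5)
Your first stage --- pinning the common radius of convergence of all the $L_m(z)$ at $\rho$ via the sandwich $L_m^{(h)}\preceq L_m\preceq L_\infty$ with a truncated family --- is essentially the paper's argument (Lemmas~\ref{conv}--\ref{l:1}). The problem is the second stage, and there your proposal has a genuine gap. The plan of showing $D_m(z)=L_\infty(z)-L_m(z)=O\left(|1-z/\rho|^{1/2}\right)$ near $\rho$, ``so that $L_m$ inherits $a_\infty$'s leading constant,'' cannot work: $D_m=K_m$ has nonnegative coefficients, is not identically zero (e.g.\ $S^m0\in\cL_\infty\setminus\cL_m$), and has radius of convergence exactly $\rho$ by Lemma~\ref{l:0}, so $D_m(\rho)=K_m(\rho)>0$ and $a_m=L_m(\rho)$ is \emph{strictly} smaller than $a_\infty$. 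What one actually has to rule out is the opposite danger, which the paper states explicitly after Lemma~\ref{l:0}: in $L_m=L_\infty-K_m$ the two square-root terms at $\rho$ might \emph{cancel}, leaving $b_m=0$ and a singularity of higher type (hence a subexponential factor other than $n^{-3/2}$). Your closing remark that ``$b_m<0$ follows since $L_m$ has nonnegative coefficients and a square-root singularity'' assumes exactly what is to be proved; positivity of coefficients does not exclude $b_m=0$. Your two fallbacks do not repair this: the bootstrapping along \eqref{mr:eq:5} has, as you note yourself, no base case because the recursion runs upward in $m$, and there is no off-the-shelf theorem for this infinite, non-strongly-connected system that you could cite.

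The paper's resolution, which is absent from your proposal, is to manufacture a base case artificially: define $L_{m,N}(z)$ by $L_{N,N}:=L_\infty$ and the downward recurrence \eqref{rec:L_mN}, so that each $L_{m,N}$ is an explicit \emph{finite} nesting of square roots over $L_\infty$. A direct computation (Lemma~\ref{l:02}) shows every radicand encountered is strictly positive at $z=\rho$ except the innermost one, whence $L_{m,N}\sim a_{m,N}-b_{m,N}\sqrt{1-z/\rho}$ with $b_{m,N}>0$ --- this is the ``$g(\rho)>0$'' check you correctly identify, but executed where an induction can actually start. One then shows $\cL_{m,N}\supseteq\cL_{m,N+1}\supseteq\cL_m$, so $L_{m,N}\to L_m$ uniformly on $[0,\rho]$ with rate $O(N^{-1/2})$ (Lemma~\ref{speedlemma}), and the monotonicity makes $(b_{m,N})_N$ increasing and bounded by $b_\infty$, hence convergent to some $b_m\ge b_{m,m}>0$; passing the singular expansion to the limit gives \eqref{mainresult}. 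If you want to salvage your difference-based route, you would have to prove a two-term expansion $K_m=\alpha_m+\beta_m\sqrt{1-z/\rho}+\cdots$ with $\beta_m$ strictly between $0$ and $-b_\infty$, and the linear recurrence \eqref{mr:eq:7} alone does not obviously yield that without some analogue of the paper's approximation scheme.
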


\begin{remark} 
In case of given $a, b, c, d$ and $m$ it is possible to compute $C$ numerically.
Indeed, in~\cite{GG16} an approximation procedure has been developped to construct a subset and
a superset of $\cL_m$. Using this, we showed that in one approximation step in the case of natural
counting ($a=b=c=d=1$) the constant determining the asymptotic number of closed lambda terms
($m=0$) we have $0.07790995266\le C_{1,1,1,1,0} \le 0.0779099823$,
and for Tromp's binary lambda terms we obtained $0.01252417\le C_{2,1,2,2,0} \le 0.01254594$. So,
the fast convergence of the approximation procedure allows us to determine the constant very
accurately with only moderate computational effort. 
\end{remark}

\begin{remark}
The generating function $L_{m}(z)$ can be expressed as an infinitely nested radical. Solving 
\eqref{mr:eq:5}, a quadratic equation for $L_m(z)$ gives an expression involving a radical the
radicand of which contains $L_{m+1}(z)$. Then iterate to express $L_{m+1}(z), L_{m+2}(z),\dots$ in
the same way. Eventually, we get 
\begin{equation} \label{infnest}
L_{m}(z) = 
\frac{
1 - \sqrt{w_m(z) + 2z^c \sqrt{w_{m+1}(z) + 2z^c \sqrt{w_{m+2}(z) + 2z^c \sqrt{\cdots}}}}}{2z^d}
\end{equation}
where
\[ 
w_{j}(z) = 1 - 4z^{a+d} \frac{1 - z^{j b}}{1-z^b} - 2z^c \text{ for } j=0,1,2,\dots. 
\] 
Nested radicals appeared in other enumeration problems concerning lamdba terms as well. Some
examples of finitely nested radicals and divergent infinitely nested radicals were studied in
\cite{BGGG17}. The infinitely nested radical \eqref{infnest} is convergent for $|z|\le\rho$, in
contrast to the one considered in \cite{BGGG17}. Though it seems still hard to study it directly,
we eventually succeeded in showing Theorem~\ref{thm:1} by approximating it by a finite nested
radical and then studying the convergence to the infinitely nested one. 
\end{remark}

The proof of Theorem~\ref{thm:1} works in two steps. The asymptotics of $\left[z^n\right]
L_{m}(z)$ heavily depends on the behaviour of the function $L_m(z)$ near its smallest positive
singularity. By the transfer theorems of Flajolet and
Odlyzko~\cite{DBLP:journals/siamdm/FlajoletO90} we know that the 
location of the singularity gives the exponential growth rate, the nature of the singularity
determines the subexponential factor. The first task is therefore to locate the dominant
singularities of $L_m(z)$, for all $m\in \mathbb N$. 
We will show that all $L_m(z)$ have the same dominant singularity and that it is equal to 
$\rho$, the dominant singularity of $L_\infty(z)$. The second step is then a more
detailed analysis in order to show that all function have a square-root type singularity at $\rho$.

Let $\cK_m = \cL_{\infty} \setminus \cL_{m}$ and $K_{m}(z) = L_{\infty}(z) - L_{m}(z)$. Then using
\eqref{mr:eq:1}~and~\eqref{mr:eq:5} we obtain 
\begin{equation}\label{mr:eq:6}
K_{m}(z) = z^a \sum_{j=m}^{\infty} z^{bj} + z^c K_{m+1}(z) + z^d K_{m}(z) L_{\infty}(z) + z^d K_{m}(z) L_{m}(z).
\end{equation}
which implies 
\begin{equation}\label{mr:eq:7}
K_{m}(z) = \frac{z^{a+bm}}{(1-z^b)(1 - z^d (L_{\infty}(z) + L_{m}(z)))} + \frac{z^c}{1 - z^d (L_{\infty}(z) + L_{m}(z))}{} K_{m+1}(z).
\end{equation}
Note that $K_m(z)$ as well as $L_m(z)$ define analytic functions in a neighbourhood of $z=0$. 

We introduce the
class $\cL_{m}^{(h)}$ of lambda terms in $\cL_{m}$ where the length of each
string of successors is bounded by a constant integer $h$. As before, set $L_{m,n}^{(h)} =
\left|\left\{t \in \cL_{m}^{(h)}: |t|=n \right\}\right|$ and $L_{m}^{(h)}(z) = \sum_{n \geq 0}
L_{m,n}^{(h)} z^n$. Then $L_{m}^{(h)}(z)$ satisfies the functional equation
\begin{equation}\label{mr:eq:9}
L_{m}^{(h)}(z) = 
\begin{cases} 
z^a \sum_{j=0}^{m-1} z^{bj} + z^c L_{m+1}^{(h)}(z) + z^d L_{m}^{(h)}(z)^2 & \textrm{if } m < h,
\\
z^a \sum_{j=0}^{h-1} z^{bj} + z^c L_{h}^{(h)}(z) + z^d L_{h}^{(h)}(z)^2 & \textrm{if } m \geq h.
\end{cases}
\end{equation}
Notice that for $m \geq h$ we have a quadratic equation for $L_m^{(h)}(z)=L_{h}^{(h)}(z)$ 
that has the solution
\[
L_{h}^{(h)}(z) = \frac{1-z^c-\sqrt{(1-z^c)^2 - 4 z^{a+d} \frac{1-z^{b h}}{1-z^b}}}{2 z^d}.
\]
For $m < h$ we have a relation between $L_{m}^{(h)}(z)$ and $L_{m+1}^{(h)}(z)$ which gives rise to a
representation of $L_m^{(h)}(z)$ in terms of a nested radical
(\emph{cf.}~\cite{BGGG17}) after all. Indeed, for $m < h$ we have
\begin{equation}\label{nestedrad}
L_{m}^{(h)}(z) = 
\frac{
1 - \sqrt{r_m(z) + 2z^c \sqrt{r_{m+1}(z) + 2z^c \sqrt{\cdots \sqrt{r_{h-1}(z) + 2z^c \sqrt{r_h(z)}}}}}
}
{2z^d}
\end{equation}
where
\begin{equation}\label{radic}
r_{j}(z) = 
\begin{cases}
1 - 4z^{a+d} \frac{1 - z^{j b}}{1-z^b} - 2z^c & \textrm{if } m \leq j < h-1, \\
1 - 4z^{a+d} \frac{1-z^{(h-1)b}}{1-z^b} - 2z^c + 2z^{2c} & \textrm{if } j = h-1, \\
(1-z^c)^2 - 4 z^{a+d} \frac{1-z^{b h}}{1-z^b} & \textrm{if } j = h.
\end{cases}
\end{equation}

\begin{lemma}\label{conv}
For all $m\ge0$ the dominant singularity $\rho^{(h)}:=\rho_{m}^{(h)}$ of $L_m^{(h)}(z)$ is
independent of $m$. Moreover, we have $\lim_{h \to \infty} \rho^{(h)} = \rho$. 
\end{lemma}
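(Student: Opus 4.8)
The plan is to handle the two assertions separately. For the first — that $\rho^{(h)}_m$ does not depend on $m$ — I would argue as follows. For $m\ge h$ the function $L^{(h)}_m(z)=L^{(h)}_h(z)$ is given explicitly, and its dominant singularity is the smallest positive root of $(1-z^c)^2-4z^{a+d}\frac{1-z^{bh}}{1-z^b}$, i.e.\ of $(1-z^b)(1-z^c)^2-4z^{a+d}(1-z^{bh})$; call it $\rho^{(h)}$. For $m<h$, relation \eqref{mr:eq:9} expresses $L^{(h)}_m$ as a finite nested radical \eqref{nestedrad} built from the polynomials $r_j$ and from $L^{(h)}_h$. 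Working downwards from $j=h$ to $j=m$, each step has the shape $L^{(h)}_j(z)=\bigl(1-\sqrt{u_j(z)}\bigr)/(2z^d)$ where $u_j$ is a polynomial plus a multiple of $\sqrt{u_{j+1}}$; hence a singularity of $L^{(h)}_j$ can only arise either from a singularity already present in $u_{j+1}$ (propagated upward) or from a new branch point where $u_j$ itself vanishes. One must check that on $(0,\rho^{(h)})$ all the radicands $u_j$ stay positive and bounded away from $0$, so that no new branch point is created before $\rho^{(h)}$, and that at $z=\rho^{(h)}$ the innermost radicand vanishes to order one. Positivity on the open interval should follow from the same monotonicity idea used in the proof of Proposition \ref{i:prop:1}: differentiate the relevant polynomials and observe that all contributions have a fixed sign for $0<z<1$; since $0<\rho^{(h)}<1$, this puts us in the regime where the estimates hold. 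This gives $\rho^{(h)}_m=\rho^{(h)}$ for every $m$.

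For the second assertion, $\lim_{h\to\infty}\rho^{(h)}=\rho$, I would use a monotonicity-plus-bracketing argument. Since $\cL^{(h)}_m\subseteq\cL^{(h+1)}_m\subseteq\cL_m$, coefficientwise we have $L^{(h)}_m(z)\preceq L^{(h+1)}_m(z)\preceq L_m(z)\preceq L_\infty(z)$, so the radii of convergence satisfy $\rho^{(h)}\ge\rho^{(h+1)}\ge\rho$; thus $\rho^{(h)}$ decreases and the limit $\rho^\ast:=\lim_{h\to\infty}\rho^{(h)}\ge\rho$ exists. For the reverse inequality, note that $\rho^{(h)}$ is the smallest positive root of $g_h(z):=(1-z^b)(1-z^c)^2-4z^{a+d}(1-z^{bh})$, while $\rho$ is the smallest positive root of $g(z):=(1-z^b)(1-z^c)^2-4z^{a+d}$. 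On any compact subinterval of $(0,1)$ the term $4z^{a+d}z^{bh}\to0$ uniformly as $h\to\infty$, so $g_h\to g$ uniformly there; since $g'(\rho)\neq0$ (the simple-root fact established for Proposition \ref{i:prop:1}, as $g=f$), a standard continuity-of-roots argument forces the smallest positive root of $g_h$ to converge to $\rho$, i.e.\ $\rho^\ast=\rho$.

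The main obstacle, I expect, is the first part: verifying carefully that the finite nested radical \eqref{nestedrad} has no singularity strictly inside $(0,\rho^{(h)})$, i.e.\ that none of the intermediate radicands $u_j(z)$ hits zero before $z$ reaches $\rho^{(h)}$, and simultaneously that $u_j(z)$ stays real and nonnegative there so that the iterated square roots are well defined and analytic. This requires a uniform-in-$j$ sign analysis of the polynomials $r_j$ from \eqref{radic} together with an induction showing $u_j(z)\ge u_{j+1}(z)\ge\cdots$ or some comparable lower bound on $(0,\rho^{(h)})$; the slightly exceptional forms of $r_{h-1}$ and $r_h$ in \eqref{radic} (the extra $+2z^{2c}$ term and the $(1-z^c)^2$ instead of $1-2z^c$) have to be accommodated in this bookkeeping. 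Once that sign analysis is in place, the analyticity of $L^{(h)}_m$ on the disk $|z|<\rho^{(h)}$ and the square-root branch point at $\rho^{(h)}$ follow routinely, and the independence of $m$ is immediate from the explicit common value $\rho^{(h)}$.
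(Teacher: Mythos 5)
Your argument is essentially the paper's: it too identifies $\rho^{(h)}$ as the smallest positive zero of the innermost radicand $r_h$ (which is independent of $m$) and obtains the limit from the convergence of $r_h$ to the radicand $r$ of \eqref{mr:eq:2} on the unit interval; your bracketing via $\cL_m^{(h)}\subseteq\cL_m^{(h+1)}\subseteq\cL_\infty$ and the simple-root continuity argument just make the last step more explicit than the paper's one-line version. The step you flag as the main obstacle is exactly what the paper dismisses with ``one easily sees'', and it does go through: writing $L_j^{(h)}=\bigl(1-\sqrt{u_j}\bigr)/(2z^d)$ with $u_j=1-4z^{a+d}\frac{1-z^{bj}}{1-z^b}-4z^{c+d}L_{j+1}^{(h)}(z)$, each $u_j$ is $1$ minus a series with nonnegative coefficients, hence decreasing on $[0,\rho^{(h)}]$, so it suffices to check positivity at $\sigma:=\rho^{(h)}$; there the inclusion $\cL_{j+1}^{(h)}\subseteq\cL_h^{(h)}$ gives $L_{j+1}^{(h)}(\sigma)\le L_h^{(h)}(\sigma)=(1-\sigma^c)/(2\sigma^d)$ and $r_h(\sigma)=0$ gives $4\sigma^{a+d}\frac{1-\sigma^{bj}}{1-\sigma^b}\le(1-\sigma^c)^2$, whence $u_j(\sigma)\ge 1-(1-\sigma^c)^2-2\sigma^c(1-\sigma^c)=\sigma^{2c}>0$ by downward induction on $j$ — the same computation the paper carries out later in Lemma~\ref{l:02}.
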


\begin{proof} 
Since $\cL_{m}^{(h)}\subseteq \cL_\infty$, the dominant singularity of $L_m^{(h)}(z)$ cannot be
larger than $\rho$. Furthermore, it must be the smallest positive number where any of the
radicands in \eqref{nestedrad} becomes zero. Since $\rho<1$, one easily sees from \eqref{radic}
that $\rho_{(h)}$ is the smallest positive 
root of $r_h(z)$ and that it is of type $\frac12$, \emph{i.e.}, there are constants
$a_{m}^{(h)}, b_{m}^{(h)}$ depending on $m$ and $h$ such that 
$L_m^{(h)}(z) = a_{m}^{(h)} + b_{m}^{(h)} \left(1 - \frac{z}{\rho^{(h)}}\right)^{\frac12} +
\BigO{\left|1 - \frac{z}{\rho^{(h)}}\right|}$, as $z \to \rho^{(h)}$ in such a way that
$\arg\(z-\rho^{(h)}\)\neq 0$. Since $r_h(z)$ is independent of $m$, so is $\rho_{m}^{(h)}$. 
Notice that in the unit interval $r_h(z)$ converges uniformly to $r(z)$, the radicand in
\eqref{mr:eq:2}. Thus $\lim_{h \to \infty} \rho^{(h)} = \rho$ since $\rho$ is the smallest positive
root of $r(z)$.
\end{proof}

Let us begin with computing the radii of convergence of the functions $K_m(z)$ and $L_m(z)$. For
the case of binary lambda calculus Lemmas~\ref{l:01}~and~\ref{l:1} were already proven 
in~\cite{GLBinLT}. To extend those results to our more general setting, we will use different
techniques. 

\begin{lemma}\label{l:0}
For all $m \geq 0$ the radius of convergence of $K_{m}(z)$ equals $\rho$ (the radius of
convergence of $L_{\infty}(z)$).
\end{lemma}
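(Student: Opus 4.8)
The plan is to sandwich the radius of convergence $\rho_{K_m}$ of $K_m(z)$ between $\rho$ and $\rho$ by two separate arguments. For the upper bound $\rho_{K_m}\le\rho$, recall that $K_m(z)=L_\infty(z)-L_m(z)$ and $K_m$ has nonnegative coefficients (since $\cL_m\subseteq\cL_\infty$, so $L_m(z)\preceq L_\infty(z)$). Hence $\rho_{K_m}$ is a singularity of $K_m$ by Pringsheim's theorem, and since $L_\infty$ is analytic on $|z|<\rho$ with its only singularity there at $\rho$ (Proposition~\ref{i:prop:1}), if $\rho_{K_m}>\rho$ were to hold then $L_m=L_\infty-K_m$ would be analytic in a punctured neighbourhood of $\rho$ on the real axis while $L_\infty$ is not — actually the cleaner route is: $K_m(z)\preceq L_\infty(z)$ would follow if we knew $L_m$ had nonnegative coefficients only, which is not enough. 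Instead I would note directly that $K_m$ counts the nonempty class $\cK_m=\cL_\infty\setminus\cL_m$ of lambda terms that are \emph{not} $m$-open; this class is infinite (e.g. it contains De Bruijn indices $n$ with $n>m$, or terms with deeply unbound variables), so $K_m(z)$ is a genuine power series with infinitely many positive coefficients, and moreover $\cK_m$ injects into $\cL_\infty$, giving $K_m(z)\preceq L_\infty(z)$ and therefore $\rho_{K_m}\ge\rho$. Wait — that inequality goes the wrong way for an upper bound. Let me restructure.

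The correct structure is as follows. \textbf{Step 1 (lower bound $\rho_{K_m}\ge\rho$):} Since $\cK_m\subseteq\cL_\infty$ we have $0\preceq K_m(z)\preceq L_\infty(z)$, hence the radius of convergence of $K_m$ is at least that of $L_\infty$, which is $\rho$. \textbf{Step 2 (upper bound $\rho_{K_m}\le\rho$):} Here I would use the recursion \eqref{mr:eq:7}. First establish the easy bound $\rho_{K_{m+1}}\ge\rho_{K_m}$: indeed $\cK_{m+1}\subseteq\cK_m$ (every $(m+1)$-open term is more restrictive... actually every $m$-open term is $(m+1)$-open, so $\cL_m\subseteq\cL_{m+1}$, hence $\cK_{m+1}\subseteq\cK_m$, hence $K_{m+1}(z)\preceq K_m(z)$), which already gives $\rho_{K_{m+1}}\ge\rho_{K_m}$. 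Combined with Step 1, all the $\rho_{K_m}$ lie in $[\rho,\infty]$ and form a nondecreasing sequence in $m$. To pin $\rho_{K_0}$ (and hence all of them, by monotonicity plus Step 3 below) down to exactly $\rho$, observe from \eqref{mr:eq:7} that the coefficient $z^{a+bm}/\big((1-z^b)(1-z^d(L_\infty(z)+L_m(z)))\big)$ has $L_\infty(z)$ inside it, and $L_\infty$ has a square-root singularity at $\rho$ (Proposition~\ref{i:prop:1}); I would argue that the factor $1-z^d(L_\infty(z)+L_m(z))$ does not vanish at $\rho$ — using $L_\infty(\rho)=a_\infty$ and the explicit value, or a monotonicity/positivity argument — so that this first summand of $K_m$ inherits the singularity of $L_\infty$ at $\rho$, forcing $\rho_{K_m}\le\rho$. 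This is the delicate point.

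\textbf{Step 3 (uniformity in $m$):} It remains to rule out that $\rho_{K_m}$ strictly increases to something $>\rho$ as $m\to\infty$ or that some $\rho_{K_m}>\rho$. I would handle this via the approximation $\cL_m^{(h)}$ and Lemma~\ref{conv}: since $\cL_m^{(h)}\subseteq\cL_m$, we get $L_m^{(h)}(z)\preceq L_m(z)\preceq L_\infty(z)$, so $K_m(z)=L_\infty(z)-L_m(z)\preceq L_\infty(z)-L_m^{(h)}(z)$, and the right-hand side has radius of convergence $\rho^{(h)}\to\rho$; together with Step~1 this squeezes $\rho_{K_m}=\rho$ for every $m$. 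Alternatively, one can avoid Step~2's delicate non-vanishing check entirely and run the whole argument through this squeeze: $\rho\le\rho_{K_m}\le\rho^{(h)}$ is false since $\rho^{(h)}>\rho$; rather one needs $L_\infty(z)-L_m^{(h)}(z)$ to have radius of convergence exactly $\rho^{(h)}$ and let $h\to\infty$, which only gives $\rho_{K_m}\ge\rho$ again. So the genuine content, and the main obstacle, is \textbf{Step~2}: proving that $1-z^d(L_\infty(z)+L_m(z))\ne0$ on $[0,\rho]$, equivalently that $L_\infty$ together with $L_m$ stays below the relevant threshold throughout the interval of convergence. I expect this to follow from the inequality $z^d L_\infty(z)<1-z^c$ (which is equivalent to $L_\infty(z)<(1-z^c)/z^d$, and from \eqref{mr:eq:2} one sees $z^d L_\infty(z)=\tfrac12\big((1-z^c)-\sqrt{\cdots}\big)<\tfrac12(1-z^c)<1-z^c$ for $z\in(0,\rho]$, since the radicand is nonnegative there), together with the analogous bound $z^d L_m(z)\le z^d L_\infty(z)$, giving $z^d(L_\infty(z)+L_m(z))\le 2z^d L_\infty(z)\le(1-z^c)<1$. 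This closes the estimate and completes the proof.
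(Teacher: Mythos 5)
Your overall route coincides with the paper's: the lower bound $\rho_{K_m}\ge\rho$ from $\cK_m\subseteq\cL_\infty$, and the analysis of \eqref{mr:eq:7} resting on the inequality $\rho^d(L_\infty(\rho)+L_m(\rho))\le 2\rho^dL_\infty(\rho)=1-\rho^c<1$, which shows that the denominator $1-z^d(L_\infty(z)+L_m(z))$ cannot vanish on $(0,\rho]$ --- this is literally the computation in the paper's proof. The meandering in your first paragraph and the admitted circularity of your Step~3 are harmless, since you correctly identify Step~2 as the real content.

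However, Step~2 as written has a genuine gap at its last move. Knowing that the first summand of \eqref{mr:eq:7} is singular at $\rho$ does not by itself ``force'' $\rho_{K_m}\le\rho$: the right-hand side of \eqref{mr:eq:7} is a \emph{sum} of two terms, and the second one, $\frac{z^c}{1-z^d(L_\infty(z)+L_m(z))}K_{m+1}(z)$, may also be singular at $\rho$, so a priori the two singular parts could cancel and leave $K_m$ analytic at $\rho$. (This is exactly the cancellation phenomenon the paper itself warns about right after this lemma, in the identity $L_m=L_\infty-K_m$.) The fix is one line and uses positivity of coefficients: both summands in \eqref{mr:eq:7} are power series with nonnegative coefficients, hence
$K_m(z)\succeq \frac{z^{a+bm}}{(1-z^b)(1-z^d(L_\infty(z)+L_m(z)))}\succeq z^{a+bm}\cdot z^dL_\infty(z)$,
using $\frac1{1-u}\succeq u$ for any $u\succeq 0$ with $u(0)=0$; the rightmost series has radius of convergence exactly $\rho$, whence $\rho_{K_m}\le\rho$. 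Equivalently, in combinatorial terms, $\cK_m$ contains the injective image of $\cL_\infty$ under $M\mapsto M\,(S^m 0)$, so $[z^{\,n+a+bm+d}]K_m(z)\ge[z^n]L_\infty(z)$. With that observation inserted your argument is complete; note also that this same point is where the paper's own wording (``subsets of $\cL_\infty$, thus the dominant singularity cannot be larger than $\rho$'') is at its tersest, so making it explicit is worthwhile.
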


\begin{proof}
Inspecting (\ref{mr:eq:7}) reveals that $K_m(z)$ can only be singular if $K_{m+1}(z)$, $L_m(z)$,
or $L_\infty(z)$ is singular or any of the expressions appearing in some denominator vanish. Both 
$\cL_{m}$ and $\cK_m$ are subsets of $\cL_\infty$, thus the dominant singularity of $K_m(z)$
cannot be larger than $\rho$. Since $\rho<1$, the term $1-z^b$ is certainly positive. So the only
subexpression which could cause a singularity smaller than $\rho$ is 
$\frac{1}{1 - z^d (L_{\infty}(z) + L_{m}(z))}$. 
This is the generating function of a sequence of combinatorial structures associated
with the generating function $z^d (L_{\infty}(z) + L_{m}(z))$. 
One can check that we are not in the case of a supercritical sequence schema, \emph{i.e.}, 
the dominant singularity of the considered fraction does not come from a root of its denominator,
see~\cite[pp.~293]{Flajolet:2009:AC:1506267}), because $1 - \rho^d \left(L_{\infty}(\rho) +
L_m(\rho)\right) > 0$. This follows from 
\[
\rho^d \left(L_{\infty}(\rho) + L_m(\rho)\right) \leq 2 \rho^d L_{\infty}(\rho) = 1 - \rho^c < 1.
\]
The first inequality holds because $L_{\infty}(\rho) \geq L_m(\rho)$ for all $m\ge 0$ and the
second one because $\rho > 0$. 
Therefore, we conclude after all that for all $m \geq 0$ the radius of convergence of $K_{m}(z)$ equals $\rho$, the radius of convergence of $L_{\infty}(z)$. 
\end{proof}

Note that we are not done yet. Indeed, the singularities (at $z=\rho$) of the functions on the
right-hand side of $L_m(z)=L_\infty(z)-K_m(z)$ might cancel and $L_m(z)$ would have a larger
dominant singularity. So, we first show that all $L_m(z)$ have the same dominant singularity and
in a second step that this singularity is indeed at $\rho$. 

\begin{lemma}\label{l:01}
All the functions $L_{m}(z)$, $m \geq 0$, have the same radius of convergence.
\end{lemma}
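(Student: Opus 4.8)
The plan is to exploit the relation $L_m(z)=L_{m+1}(z)$-type recursion encoded in \eqref{mr:eq:5} together with the comparison $L_m(z)=L_\infty(z)-K_m(z)$ and the fact (Lemma~\ref{l:0}) that every $K_m(z)$ has radius of convergence exactly $\rho$. Write $\rho_m$ for the radius of convergence of $L_m(z)$. Since $\cL_m\subseteq\cL_\infty$ we have $\rho_m\ge\rho$ for all $m$, and since $\cL_m\subseteq\cL_{m+1}$ (every $m$-open term is $(m+1)$-open) we get $L_m(z)\preceq L_{m+1}(z)$, hence the sequence $(\rho_m)_{m\ge0}$ is non-increasing: $\rho_0\le\rho_1\le\cdots$. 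Wait — the inclusion gives $L_m\preceq L_{m+1}$, so $\rho_m\ge\rho_{m+1}$; thus the sequence is non-increasing in $m$ and bounded below by $\rho$, so it converges to some limit $\rho_\infty\ge\rho$. The goal is to show $\rho_m=\rho_0$ for all $m$, i.e. that this non-increasing sequence is actually constant.

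First I would show $\rho_m\le\rho_{m+1}$ implies we only need a reverse bound. From \eqref{mr:eq:5}, solving the quadratic for $L_m$ in terms of $L_{m+1}$ gives
\[
L_m(z)=\frac{1-z^c-\sqrt{(1-z^c)^2-4z^{a+d}\sum_{j=0}^{m-1}z^{bj}\cdot\frac{?}{}-4z^{c+d}L_{m+1}(z)}}{2z^d},
\]
more precisely $L_m(z)=\dfrac{1-\sqrt{1-4z^d\left(z^a\sum_{j=0}^{m-1}z^{bj}+z^cL_{m+1}(z)\right)}}{2z^d}$. This shows $L_m(z)$ is analytic wherever $L_{m+1}(z)$ is analytic \emph{and} the radicand $R_m(z):=1-4z^d(z^a\sum_{j=0}^{m-1}z^{bj}+z^cL_{m+1}(z))$ stays away from the negative real axis / nonzero. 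So a singularity of $L_m$ at a point $z_0<\rho_{m+1}$ can only come from $R_m(z_0)=0$. On $(0,\rho_{m+1})$ the function $R_m$ is real, starts at $R_m(0)=1>0$, and is strictly decreasing (all terms in its derivative contribute with the right sign, exactly as in the proof of Proposition~\ref{i:prop:1}); hence it has at most one zero there, and $\rho_m$ equals that zero if it exists in $(0,\rho_{m+1})$, otherwise $\rho_m=\rho_{m+1}$. In either case $L_m$ has a square-root singularity at $\rho_m$.

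The heart of the argument is to rule out a strict drop, i.e. to show $R_m(z)>0$ for all $z\in(0,\rho_{m+1})$, equivalently $\rho_m=\rho_{m+1}$ for every $m$, which then forces $\rho_m=\rho_\infty$ for all $m$ and, combined with the next lemma identifying $\rho_\infty$ with $\rho$, completes the picture. The cleanest route is a bootstrap: if for some $m$ we had $\rho_m<\rho_{m+1}$, then iterating the reverse direction \eqref{mr:eq:5} (expressing $L_{m+1}$ via $L_{m+2}$, etc.) and using $L_m(z)=L_\infty(z)-K_m(z)$ with Lemma~\ref{l:0}'s statement $\rho(K_m)=\rho$ would produce a contradiction with $L_m(z)\preceq L_\infty(z)$ and the monotonicity $\rho_m\ge\rho_{m+1}\ge\cdots\ge\rho$: indeed $L_m(z)=L_\infty(z)-K_m(z)$ exhibits $L_m$ as a difference of two functions each singular exactly at $\rho$, so the only way $L_m$ can have a \emph{smaller} radius of convergence than $\rho$ is impossible (it can't: $L_m\preceq L_\infty$ forces $\rho_m\ge\rho$), and the only way it can have a radius strictly \emph{between} $\rho$ and $\rho_{m+1}$ is via a genuine zero of $R_m$; one shows $R_m(\rho)>0$ by the same sequence-schema estimate as in Lemma~\ref{l:0} (using $\rho^d(L_\infty(\rho)+\text{stuff})<1$ via $2\rho^dL_\infty(\rho)=1-\rho^c<1$), and since $R_m$ is decreasing on $(0,\rho)$ and stays positive up to $\rho$, no such intermediate zero exists. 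Hence $\rho_m\ge\rho$ for all $m$ with equality propagating, giving $\rho_m=\rho_0$.

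The main obstacle I anticipate is the interlocking of the two directions of monotonicity: the set-inclusion $\cL_m\subseteq\cL_{m+1}$ pushes radii one way while the recursion \eqref{mr:eq:5} naturally controls $L_m$ in terms of $L_{m+1}$ (the other way), so one must be careful that the bootstrap does not become circular. The safe resolution is to fix the limit $\rho_\infty=\lim_m\rho_m$ first (it exists by monotonicity and boundedness), then show (a) $\rho_\infty\ge\rho$ trivially, and (b) that each $\rho_m$ cannot exceed $\rho_\infty$ while also $R_m$ being positive on $(0,\rho)$ forbids $\rho_m\in(\rho,\rho_\infty)$ unless $\rho_\infty=\rho$; these together pin every $\rho_m$ to the common value. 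I would present it in exactly that order: monotonicity and existence of the limit, the reverse representation of $L_m$ via $L_{m+1}$ with its at-most-one-zero radicand, positivity of the radicand at $\rho$ via the non-supercritical estimate, and finally the conclusion that all radii coincide.
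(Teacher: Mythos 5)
Your proposal gets the easy direction right but never actually proves the hard one. From $\cL_m\subseteq\cL_{m+1}$ you correctly obtain $L_m(z)\preceq L_{m+1}(z)$ and hence $\rho_m\ge\rho_{m+1}$; what remains is the reverse inequality $\rho_m\le\rho_{m+1}$, i.e.\ ruling out that $L_m$ stays analytic \emph{past} $\rho_{m+1}$. Your radicand analysis of $L_m(z)=\bigl(1-\sqrt{R_m(z)}\bigr)/(2z^d)$ with $R_m(z)=1-4z^{a+d}\sum_{j=0}^{m-1}z^{bj}-4z^{c+d}L_{m+1}(z)$ only shows that $L_m$ has no singularity \emph{before} $\rho_{m+1}$: positivity of the radicand yields a lower bound on $\rho_m$, which is exactly what the coefficient comparison already gave. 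The step you call the heart of the argument --- that $R_m>0$ on $(0,\rho_{m+1})$ is ``equivalent'' to $\rho_m=\rho_{m+1}$ --- is not justified: the dangerous scenario, which the paper explicitly flags just before this lemma, is that the singularity of $L_{m+1}$ at $\rho_{m+1}$ \emph{cancels} inside the radical, so that $\sqrt{R_m}$, and hence $L_m$, continues analytically beyond $\rho_{m+1}$. Nothing in your write-up excludes this. The positivity computation at $z=\rho$ is irrelevant to it (and in any case it only works on $(0,\rho]$, not on all of $(0,\rho_{m+1})$, since it rests on $4z^{a+d}/(1-z^b)\le(1-z^c)^2$, which fails for $z>\rho$), and the appeal to $L_m=L_\infty-K_m$ together with Lemma~\ref{l:0} runs into exactly the same cancellation problem.

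The missing idea is to invert the dependence: solve \eqref{mr:eq:5} for $L_{m+1}$ instead of for $L_m$. This gives $L_{m+1}(z)=-z^{a-c}\sum_{j=0}^{m-1}z^{bj}+z^{-c}L_m(z)-z^{d-c}L_m(z)^2$, a polynomial in $L_m(z)$ with Laurent-polynomial coefficients in $z$, so $L_{m+1}$ is analytic wherever $L_m$ is (the apparent singularity at $0$ must cancel since $L_{m+1}$ is a power series), whence $\rho_{m+1}\ge\rho_m$; combined with the inclusion this gives equality. Equivalently, within your own setup: if $L_m$ were analytic at $\rho_{m+1}$, then $R_m=(1-2z^dL_m)^2$ would be analytic there, and solving for $L_{m+1}$ would make it analytic at its own radius of convergence, contradicting Pringsheim's theorem. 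Either of these one-line observations closes the gap; without one of them the proof is incomplete.
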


\begin{proof}
Let $\rho_m$ denote the radius of convergence of the function $L_{m}(z)$. 
From the definition of $L_{m}(z)$ it is known that for all $m \geq 0$ and for all $n$
we have $\left[z^n\right] L_{m}(z) \leq \left[z^n\right] L_{m+1}(z)$ and therefore $\rho_m \geq
\rho_{m+1}$. 
Moreover, from (\ref{mr:eq:5}) we know
\[
L_{m+1}(z) = - z^{a-c} \sum_{j=0}^{m-1} z^{bj} + z^{-c} L_{m}(z) - z^{d-c} L_{m}(z)^2.
\]
Hence, $L_{m+1}(z)$ is singular whenever $L_{m}(z) - z^d L_{m}(z)^2$, since the seeming singularity
at 0 must cancel, as know that $L_{m+1}(z)$ is regular there. Obviously, the radius of convergence
of $L_{m}(z) - z^d L_{m}(z)^2$ is at least $\rho_m$ and so $\rho_{m} \leq \rho_{m+1}$. 
\end{proof}

\begin{lemma}\label{l:1}
For all $m \geq 0$ the radius of convergence of $L_{m}(z)$ equals $\rho$.
\end{lemma}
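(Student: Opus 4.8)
The plan is to pin down $\rho_m$, the radius of convergence of $L_m(z)$, by squeezing it between $\rho$ from above and below, combining the two previous lemmas with the finite approximations $L_m^{(h)}(z)$. By Lemma~\ref{l:01} all the $\rho_m$ coincide, so it suffices to treat a single value of $m$, say $m=0$. Since $\cL_m\subseteq\cL_\infty$, we immediately get $\rho_m\ge\rho$, so the whole content is the reverse inequality $\rho_m\le\rho$.

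First I would establish the lower bound $L_m(z)\succeq L_m^{(h)}(z)$ coefficientwise for every $h$, which is clear combinatorially since $\cL_m^{(h)}\subseteq\cL_m$. Hence $\rho_m\le\rho^{(h)}$ for every $h$. Now let $h\to\infty$: by Lemma~\ref{conv} we have $\rho^{(h)}\to\rho$, and therefore $\rho_m\le\rho$. Together with $\rho_m\ge\rho$ this gives $\rho_m=\rho$, as desired. The only point requiring a little care is that $\rho^{(h)}$ is monotone (or at least that its $\limsup$ is $\rho$), so that passing to the limit in $\rho_m\le\rho^{(h)}$ is legitimate; but Lemma~\ref{conv} already gives the full limit, so this is immediate.

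An alternative route, avoiding the $h$-truncation, is to argue directly from \eqref{mr:eq:7} and Lemma~\ref{l:0}: we have $L_m(z)=L_\infty(z)-K_m(z)$, and both $L_\infty(z)$ and $K_m(z)$ have radius of convergence exactly $\rho$ with a square-root singularity there (for $L_\infty$ by Proposition~\ref{i:prop:1}). If one can show the singular parts do \emph{not} cancel, i.e.\ that the coefficient of $(1-z/\rho)^{1/2}$ in $L_\infty$ differs from that in $K_m$ at $z=\rho$, then $L_m$ is singular at $\rho$ and $\rho_m\le\rho$ follows. One could try to see this from positivity: $K_m(z)\prec L_\infty(z)$ strictly, and an iteration of \eqref{mr:eq:7} expresses $K_m$ through $K_{m+1}$ with a factor $z^c/(1-z^d(L_\infty+L_m))<1$ near $\rho$, which should force the singular coefficient of $K_m$ to be strictly smaller in modulus than that of $L_\infty$. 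However, making the non-cancellation rigorous this way is delicate because it needs control of $L_m(z)$ at $\rho$, which is essentially what we are trying to prove.

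For that reason the truncation argument is the cleaner one, and I would present it as the main proof: the key steps are (i) the coefficientwise domination $L_m^{(h)}\preceq L_m\preceq L_\infty$, giving $\rho\le\rho_m\le\rho^{(h)}$, and (ii) the limit $\rho^{(h)}\to\rho$ from Lemma~\ref{conv}. The main obstacle is really conceptual rather than computational: one must be sure that the upper bound on $\rho_m$ coming from the finite radicals is tight in the limit, which is exactly the uniform convergence $r_h(z)\to r(z)$ on $[0,1]$ recorded in Lemma~\ref{conv}; once that is in hand the proof is a two-line sandwich.
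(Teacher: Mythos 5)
Your main argument is essentially the paper's own proof: sandwich $L_m^{(h)}\preceq L_m\preceq L_\infty$ to get $\rho\le\rho_m\le\rho^{(h)}$ and then invoke Lemma~\ref{conv} to send $\rho^{(h)}\to\rho$. The only (harmless) difference is that you fix $m$ and let $h\to\infty$, which makes the appeal to Lemma~\ref{l:01} unnecessary, whereas the paper takes the diagonal $h=m$ and therefore needs Lemma~\ref{l:01} to transfer the bound across indices; your flagged alternative via $K_m$ is rightly set aside as circular.
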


\begin{proof}
Take $L_{m}^{(m)}$, defined in~\eqref{mr:eq:9}.
Recall that $\rho^{(m)}, \rho_m$ and $\rho$ denote the radii of convergence of $L_{m}^{(m)}(z),
L_{m}(z)$ and $L_{\infty}(z)$, respectively.
Notice that for all $m, n \geq 0$ we have $\left[z^n\right] L_{m}^{(m)}(z) \leq \left[z^n\right]
L_{m}(z) \leq \left[z^n\right] L_{\infty}(z) $ and thus $\rho^{(m)} \geq \rho_m \geq \rho$. 
Now, the assertion follows from Lemmas~\ref{conv} and~\ref{l:01}. 
\end{proof}

In order to prove Theorem~\ref{thm:1} we have to show that all functions $L_m(z)$ satisfy
$L_m(z)\sim a_m-b_m\sqrt{1-\frac z\rho}$, as $z\to\rho$, for some suitable constants $a_m$ and
$b_m$. The idea is as follows: We know that $L_\infty(z)$ admits such a representation (\emph{cf.}
Proposition~\ref{i:prop:1}) and that the functions $L_m(z)$ are related to each other via the
recurrence \eqref{mr:eq:5}. However, we cannot start from $L_\infty(z)$ and then trace backwards. 
But if $N$ is large, we then we expect that $L_N(z)$ is close to $L_\infty(z)$. So, if we replace
$L_N(z)$ by $L_\infty(z)$, thus having the desired local shape at index $N$, and then trace
backwards to index $0$, then we will obtain the desired shape for all functions up to index $N$.
Of course, the functions we obtain are not precisely the $L_m(z)$, but close to them and if we can
control the error, we are done. 

So, define functions $L_{m,N}(z)$ by 
\begin{align} 
L_{N,N}(z)&=L_\infty(z), \nonumber \\
L_{m,N}(z)&=z^a\sum_{j=0}^{m-1} z^{bj} + z^c L_{m+1,N}(z)+z^dL_{m,N}(z)^2. \label{rec:L_mN}
\end{align} 
These functions can be interpreted in a straight-forward way as generating functions associated to
some combinatorial classes. Let $\mathcal L_{m,N}$ denote the combinatorial class with generating
function $L_{m,N}(z)$, where $m=0,1,\dots,N$. 

Let us define the distance between two power series by 
$$
d \( \sum_{n\ge 0} a_nz^n, \sum_{n\ge 0} b_nz^n \) :=
\begin{cases}
0 & \text{if } a_n=b_n \text{ for all } n,  \\
2^{-\min\{\ell\,\mid\,a_\ell\neq b_\ell\}} & \text{otherwise}.
\end{cases}
$$
We remark that this distance makes the ring of formal power series of the reals a complete metric
space. 

Obviously, we have $\mathcal L_N\subseteq \mathcal L_{N,N}$, and the two classes differ only in
the set of $(N+1)$-open terms. Thus $d(L_N(z),L_{N,N}(z))<2^{-N}$. By relation \eqref{rec:L_mN}
this property propagates to smaller indices, \emph{i.e.}, $d(L_m(z),L_{m,N}(z))<2^{-N}$ holds for
$m=0,1,\dots, N$.

Moreover, observe how $\mathcal L_{m,N}$ is constructed. The terms in $\mathcal L_{m,N}$ fall
into one of three categories. The first two are: terms of the form $S^k0$ with $k<m$ and terms
constructed of two terms from $\mathcal L_{m,N}$ which are connected via an application. The last
category is an abstraction followed by a term from $\mathcal L_{m+1,N}$, \emph{i.e.}, after an
abstraction we go one level up. The latter is done as long as level $N$ is reached. After a
further abstraction any term in $\mathcal L_\infty$ may follow (contrary to the construction
of $\mathcal L_m$, where a term from $\mathcal L_{N+1}$ must follow in that case).  Therefore, we
have $\mathcal L_{m,N} \supseteq \mathcal L_m$. 

\begin{lemma}\label{speedlemma}
For all $m\ge 0$ we have $\lim_{N\to\infty} L_{m,N}(z)= L_m (z)$ uniformly for $|z|\le \rho$.
Moreover, the speed of convergence can be estimated by $|L_{m,N}(z)-L_m (z)|=O\(N^{-1/2}\)$, as
$N\to\infty$ and uniformly in $m$, i.e., the implicit $O$-constant neither depends on $N$ nor on
$m$. 
\end{lemma}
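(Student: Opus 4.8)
The plan is to exploit the recursive structure relating $L_{m,N}(z)$ to $L_{m+1,N}(z)$ and to track how a small perturbation at level $N$ propagates down to level $m$. Both $L_{m,N}(z)$ and $L_m(z)$ satisfy the same quadratic relation \eqref{mr:eq:5}/\eqref{rec:L_mN} with the same inhomogeneous term $z^a\sum_{j=0}^{m-1}z^{bj}$; only the ``boundary condition'' at level $N$ differs ($L_\infty$ versus $L_{N+1}$). Subtracting the two quadratic equations and writing $\Delta_m(z):=L_{m,N}(z)-L_m(z)$, I get
\[
\Delta_m(z) \;=\; \frac{z^c}{1 - z^d\bigl(L_{m,N}(z)+L_m(z)\bigr)}\,\Delta_{m+1}(z),
\]
exactly as in the derivation of \eqref{mr:eq:7}. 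Iterating this from level $m$ up to level $N$ yields
\[
\Delta_m(z) \;=\; \Delta_N(z)\prod_{j=m}^{N-1}\frac{z^c}{1 - z^d\bigl(L_{j,N}(z)+L_j(z)\bigr)},
\]
where $\Delta_N(z)=L_\infty(z)-L_{N+1}(z)=K_{N+1}(z)$.

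The uniform convergence on $|z|\le\rho$ (for the formal/coefficientwise statement) is immediate from the metric argument already given in the text: $d(L_{m,N}(z),L_m(z))<2^{-N}$, so coefficientwise $L_{m,N}\to L_m$. The substance of the lemma is the analytic bound $|L_{m,N}(z)-L_m(z)|=O(N^{-1/2})$ uniformly in $m$ and $z$ with $|z|\le\rho$. For this I would estimate the three ingredients of the product formula on the closed disk $|z|\le\rho$. First, each factor $q_j(z):=z^c/(1-z^d(L_{j,N}(z)+L_j(z)))$ satisfies $|q_j(z)|\le q_j(\rho)$ by positivity of all coefficients, and $q_j(\rho)=\rho^c/(1-\rho^d(L_{j,N}(\rho)+L_j(\rho)))$. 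Using $L_j(\rho)\le L_\infty(\rho)=a_\infty$ and $L_{j,N}(\rho)\le L_\infty(\rho)$ together with $2\rho^d a_\infty=1-\rho^c$ (the identity from the proof of Lemma~\ref{l:0}), one gets $q_j(\rho)\le \rho^c/(1-(1-\rho^c))=1$. So the product has modulus at most $1$ — but a bare bound of $1$ is not enough; I need a genuinely decaying factor.

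The decay has to come from $\Delta_N(\rho)=K_{N+1}(\rho)=[$value of $K_{N+1}$ at its singularity$]$. By Lemma~\ref{l:0}, $K_{N+1}(z)$ has radius of convergence $\rho$ and is finite at $\rho$; since $[z^n]K_{N+1}(z)=[z^n](L_\infty(z)-L_{N+1}(z))$ and these coefficients are nonnegative, $K_{N+1}(\rho)=\sum_n[z^n]K_{N+1}(z)\rho^n\to 0$ as $N\to\infty$ (tail of a convergent series, using that for fixed $n$ eventually $[z^n]K_{N+1}=0$). To get the rate $O(N^{-1/2})$ I would argue that $K_{N+1}(\rho)=L_\infty(\rho)-L_{N+1}(\rho)$, and since $L_\infty(z)-L_{N+1}(z)$ is itself, via \eqref{mr:eq:7} iterated, comparable to $\sum_{k\ge N}(\text{product of }q_j)\cdot z^{a+bk}/(1-z^b)$-type terms, one obtains at $z=\rho$ a bound of the form $\sum_{k\ge N}\prod_{j}q_j(\rho)\le$ (something like) $C\sum_{k\ge N}k^{-3/2}=O(N^{-1/2})$ — here I expect to need the refined estimate $q_j(\rho)=1-\Theta(1/j)$ coming from the fact that $L_\infty(\rho)-L_j(\rho)=\Theta(1/j)$, which in turn follows from $K_j(\rho)$ being the tail of a convergent series whose terms decay like $n^{-3/2}\rho^n\cdot\rho^{-n}$... this circularity is exactly the delicate point.

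\textbf{The main obstacle.} The hard part is precisely pinning down the rate $N^{-1/2}$ rather than just ``$\to 0$''. A clean way around the apparent circularity is to use the finite-radical approximants $L_m^{(h)}(z)$ of Lemma~\ref{conv} as an intermediate comparison: one has the chain $L_m^{(m)}\preceq\cdots\preceq L_m\preceq L_{m,N}\preceq L_\infty$, and the square-root singularity of $L_m^{(h)}$ at $\rho^{(h)}$ with $\rho^{(h)}\to\rho$ gives quantitative control. Concretely, I would bound $|L_{m,N}(z)-L_m(z)|\le L_\infty(\rho)-L_{N+1}(\rho)$ first (monotone coefficients, evaluate at the real point $\rho$ where everything is maximised on the disk), reducing the whole lemma to the single numerical estimate $L_\infty(\rho)-L_{N}(\rho)=O(N^{-1/2})$. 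That last estimate I would prove by sandwiching: $L_\infty(\rho)-L_N(\rho)\le L_\infty(\rho)-L_N^{(N)}(\rho)\le (L_\infty(\rho)-L_N^{(N)}(\rho^{(N)}))+(L_N^{(N)}(\rho^{(N)})-L_N^{(N)}(\rho))$ and estimating the gap $\rho-\rho^{(N)}$ — from the defining equation $r_h(\rho^{(h)})=0$ with $r_h(z)=(1-z^c)^2-4z^{a+d}(1-z^{bh})/(1-z^b)$ one finds $\rho-\rho^{(h)}=\Theta(\rho^{bh}/f'(\rho))$, i.e.\ geometrically small, which is far better than $N^{-1/2}$ and hence harmless; the $N^{-1/2}$ then genuinely comes from the square-root behaviour $L_N^{(N)}(\rho^{(N)})-L_N^{(N)}(\rho)=O(\sqrt{|1-\rho/\rho^{(N)}|})=O(\rho^{bN/2})$, which is again more than enough — so in fact one should get an exponentially small bound, and $O(N^{-1/2})$ is a comfortable (if not tight) statement. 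I would therefore present the argument via this sandwich, flagging that the stated $O(N^{-1/2})$ is all that is needed downstream even though more is true.
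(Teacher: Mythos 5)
Your final argument is correct, but it takes a genuinely different route from the paper, and it is worth comparing the two. The paper's proof is essentially a two-line coefficient comparison: since $\mathcal L_{m,N}$ and $\mathcal L_m$ agree on all terms of size at most $N$ (this is the metric bound $d(L_{m,N}(z),L_m(z))<2^{-N}$ recorded just before the lemma) and since $\mathcal L_{m,N}\setminus\mathcal L_m\subseteq\mathcal L_\infty$, one has $L_{m,N}(z)-L_m(z)=\sum_{n>N}F_nz^n$ with $0\le F_n\le [z^n]L_\infty(z)$, whence $|L_{m,N}(z)-L_m(z)|\le\sum_{n>N}\rho^n[z^n]L_\infty(z)=O\bigl(\sum_{n>N}n^{-3/2}\bigr)=O(N^{-1/2})$, uniformly in $m$ and in $|z|\le\rho$; no product formula and no finite-radical approximants are needed. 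Your route instead propagates the top-level error down through the recursion via the product of the factors $q_j$, each of modulus at most $1$ on the closed disk (your verification of $q_j(\rho)\le1$ is sound and is the same computation as in Lemma~\ref{l:0}; note the harmless off-by-one, $\Delta_N=L_{N,N}-L_N=K_N$, not $K_{N+1}$), and then controls the top-level error by the sandwich $L_\infty(\rho)-L_N(\rho)\le L_\infty(\rho)-L_N^{(N)}(\rho)=\sqrt{r_N(\rho)}/(2\rho^d)=\Theta(\rho^{bN/2})$, using the explicit closed form of $L_N^{(N)}$ and $r_N(\rho)=4\rho^{a+d+bN}/(1-\rho^b)$. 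This is valid, and it buys you strictly more than the lemma claims: an exponentially small bound instead of $O(N^{-1/2})$. Two remarks. First, the middle portion of your write-up (iterating \eqref{mr:eq:7} and hoping for $q_j(\rho)=1-\Theta(1/j)$) is indeed circular, as you noticed, and should simply be deleted; your product formula together with the sandwich is already a complete proof, and the parenthetical ``monotone coefficients'' in your last paragraph does need the product formula with $q_j(\rho)\le1$ to pass from $L_{m,N}(\rho)-L_m(\rho)$ to $L_\infty(\rho)-L_N(\rho)$. Second, your exponential estimate is not merely ``not tight'': it shows that the assertion $|L_\infty(\rho)-L_N(\rho)|=\Theta(N^{-1/2})$ made later in the paper (in the complexity analysis of the sampler) cannot hold as a lower bound, since the constant in the coefficient asymptotics of $K_N$ is $b_\infty-b_N$ and itself tends to $0$; this is worth flagging explicitly if you present your version.
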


\begin{proof}
Since the above described relaxation takes place only at the levels $N,N+1,N+2\dots$, 
we also obtain the inclusion
$\mathcal L_{m,N}\supseteq \mathcal L_{m,N+1}$. Thus the sequence $(L_{m,N}(z))_{N=m,m+1,\dots}$
is decreasing (seen as sequence of functions on $[0,\rho]$ as well as coefficient-wise). Hence,
$\lim_{N\to\infty} L_{m,N}(z)$ exists for any $0\le z\le \rho$. To find the limit, observe that 
\begin{equation} \label{difference_of_Ls}
L_{m,N}(z)=\sum_{n\ge 1} z^n [z^n] L_m (z) + \sum_{n>N} F_n z^n, 
\end{equation}  
and $F_n$ is the number of terms in $\mathcal L_{m,N}\setminus \mathcal L_m$. Therefore $F_n\ge
0$. Moreover, since $\mathcal L_{m,N}\setminus \mathcal L_m \subseteq \mathcal L_\infty$, for any
$z$ satisfying $|z|\le\rho$              
the series $\sum_{n>N} F_n z^n$ is bounded from above by the
remainder $R:=\sum_{n>N} \rho^n [z^n] L_\infty (z)$ of the convergent series for $L_\infty (z)$.
Thus we can choose $N$ so that $\sum_{n>N} F_n z^n$ is arbitrarily small. As $R$ is independent
of $z$, the convergence is uniform. 

The bound for the convergence rate follows from $F_n< [z^n] L_\infty (z) \sim C\rho^{-n}n^{-3/2}$,
which holds for all $n\le N$ and is independent of $m$. 
\end{proof}

\begin{lemma}\label{l:02}
For any $N>0$ all the functions $L_{m,N}(z)$, $m=0,1,\dots,N$, have radius of convergence equal to
$\rho$. 
\end{lemma}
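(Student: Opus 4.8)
The plan is to sandwich $L_{m,N}(z)$ coefficient-wise between $L_m(z)$ and $L_\infty(z)$, two power series whose radius of convergence is already known to equal $\rho$, and then to invoke the elementary fact that if $0\le [z^n]f\le [z^n]g\le [z^n]h$ holds for all $n$ and both $f$ and $h$ have radius of convergence $\rho$, then so does $g$; since the coefficients of $L_{m,N}(z)$ are nonnegative, $\rho$ is then automatically its dominant singularity.

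The sandwich $L_m(z)\preceq L_{m,N}(z)\preceq L_\infty(z)$, valid for $m=0,1,\dots,N$, is immediate from two inclusions already recorded in the excerpt: $\mathcal L_m\subseteq\mathcal L_{m,N}$ (noted just before Lemma~\ref{speedlemma}) and $\mathcal L_{m,N}\setminus\mathcal L_m\subseteq\mathcal L_\infty$ (used in the proof of Lemma~\ref{speedlemma}), the latter because $\mathcal L_{m,N}$ is by construction a class of genuine lambda terms. Should one wish to avoid the combinatorial picture, the same two inequalities also follow by downward induction on $m$ directly from the recurrence \eqref{rec:L_mN}: for fixed $m<N$ the operator $Y\mapsto z^a\sum_{j=0}^{m-1}z^{bj}+z^cL_{m+1,N}(z)+z^dY^2$ is monotone on power series with nonnegative coefficients and has $L_{m,N}(z)$ as its least fixed point, while $L_\infty(z)$ is a supersolution and $L_m(z)$ a subsolution of the corresponding fixed-point equation once the inductive hypothesis at level $m+1$ is in place; the base case $m=N$ is simply $L_{N,N}(z)=L_\infty(z)$ together with $\mathcal L_N\subseteq\mathcal L_{N,N}$.

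To finish, I would use Proposition~\ref{i:prop:1} and Lemma~\ref{l:1} to get $\limsup_{n\to\infty}([z^n]L_m(z))^{1/n}=\limsup_{n\to\infty}([z^n]L_\infty(z))^{1/n}=\rho^{-1}$; the sandwich then forces $\limsup_{n\to\infty}([z^n]L_{m,N}(z))^{1/n}=\rho^{-1}$, that is, radius of convergence $\rho$, for every $m=0,1,\dots,N$. I do not expect a genuine obstacle in this argument --- the only point that needs a word of justification is that $\mathcal L_{m,N}\subseteq\mathcal L_\infty$ (equivalently, the monotone fixed-point inequalities and their base case $m=N$); granting that, the statement reduces to the elementary squeeze for radii of convergence.
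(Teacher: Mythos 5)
Your proof is correct, but it follows a genuinely different route from the paper's. You sandwich $L_m(z)\preceq L_{m,N}(z)\preceq L_\infty(z)$ coefficient-wise (both inclusions $\mathcal L_m\subseteq\mathcal L_{m,N}\subseteq\mathcal L_\infty$ are indeed already available in the text) and then squeeze the radius of convergence between that of $L_m(z)$ (Lemma~\ref{l:1}) and that of $L_\infty(z)$ (Proposition~\ref{i:prop:1}); there is no circularity, since Lemma~\ref{l:1} is proved via the truncated functions $L_m^{(h)}$ and does not depend on the $L_{m,N}$. The paper instead works directly with the closed form \eqref{LmN_explicit}: it notes that the singularity cannot exceed $\rho$ because $L_{N,N}(z)=L_\infty(z)$ is singular there and this propagates down through \eqref{rec:L_mN}, and then shows that no \emph{earlier} singularity is created by verifying that the radicand $1-4z^{a+d}\frac{1-z^{bm}}{1-z^b}-4z^{c+d}L_{m+1,N}(z)$ is still strictly positive at $z=\rho$, using the identities $4\rho^{a+d}/(1-\rho^b)=(1-\rho^c)^2$ and $\rho^dL_\infty(\rho)=(1-\rho^c)/2$. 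The trade-off is worth noting: your argument is shorter and more elementary, but it only yields the radius of convergence, whereas the paper's radicand-positivity computation is exactly what establishes that the singularity at $\rho$ is of square-root type, which is needed for the Corollary immediately following the lemma (the expansion $L_{m,N}(z)\sim a_{m,N}-b_{m,N}\sqrt{1-z/\rho}$). So your proof suffices for the lemma as stated, but the paper's computation cannot be dispensed with in the surrounding development. A minor caveat: your alternative fixed-point phrasing ("$L_{m,N}$ is the least fixed point, $L_m$ a subsolution, $L_\infty$ a supersolution") should really be backed by the coefficient-wise induction on $n$ to be fully rigorous, but since your primary combinatorial argument stands on its own, this is not a gap.
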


\begin{proof}
By \eqref{rec:L_mN} we have 
\begin{equation} \label{LmN_explicit}
L_{m,N}(z)=\frac{1-\sqrt{1-4z^{a+d}\frac{1-z^{bm}}{1-z^b}-4z^{c+d} L_{m+1,N}(z)}}{2z^d}.
\end{equation} 
Since $L_{m,N}(z)$ has eventually positive coefficients, it has only one singularity on its circle
of convergence and this must lie on the positive real axis. Moreover, since $L_{N,N}(z)$ is 
singular at $\rho$, the singularity of $L_{m,N}(z)$ cannot be larger than $\rho$. 
In order to show that it is exactly $\rho$, observes that the radicand in \eqref{LmN_explicit} is
decreasing for $z>0$. Thus it suffices to show that the radicand is positive at $z=\rho$.

Note that the definition of $\rho$ implies $4\rho^{a+d}/(1-\rho^b)=(1-\rho^c)^2$ and that
$L_{m+1,N}(\rho)\le L_\infty(\rho)$. Finally, by \eqref{mr:eq:2} we have
$\rho^dL_\infty(\rho)=(1-\rho^c)/2$. Thus 
\begin{align*} 
4\rho^{a+d}\frac{1-\rho^{bm}}{1-\rho^b}+4\rho^{c+d}L_{m+1,N}(\rho) & \le  
(1-\rho^{bm})(1-\rho^c)^2 + 2\rho^c (1-\rho^c) \\
& \le (1-\rho^c)(1+\rho^c)=(1-\rho^{2c}) < 1. 
\end{align*} 
Here we used $1-\rho^{bm}<1$ (because of $b\ge 1$) in the penultimate step and $c\ge 1$ in the
last step. This implies positivity of the radicand at $z=\rho$. 
\end{proof}

\begin{corollary}
For all intergers $N>0$ and $0\le m\le N$ there are positive constants $a_{m,N}$ and $b_{m,N}$ such that 
$$
L_{m,N}(z) \sim a_{m,N}-b_{m,N}\sqrt{1-\frac z\rho}, 
$$
as $z\to \rho$. 
\end{corollary}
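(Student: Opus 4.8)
\emph{Plan.} I would prove the statement by downward induction on $m$, starting from $m=N$ and decreasing to $m=0$, propagating a local square-root expansion through the recurrence \eqref{LmN_explicit}. Throughout, ``a local expansion of the admissible form'' will mean $\alpha+\beta\sqrt{1-z/\rho}+\BigO{|1-z/\rho|}$ as $z\to\rho$ with $\arg(z-\rho)\neq0$, for real constants $\alpha,\beta$; the class of functions admitting such an expansion is closed under sums, under multiplication by functions analytic at $\rho$, and under composition with functions analytic at the value taken at $\rho$. The crucial input is the positivity of the radicand in \eqref{LmN_explicit} at $z=\rho$, which is exactly what the proof of Lemma~\ref{l:02} establishes.

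\emph{Base case.} For $m=N$ we have $L_{N,N}(z)=L_\infty(z)$, and Proposition~\ref{i:prop:1} already gives $L_\infty(z)=a_\infty+b_\infty\sqrt{1-z/\rho}+\BigO{|1-z/\rho|}$ with $a_\infty>0$ and $b_\infty<0$; so we set $a_{N,N}=a_\infty$ and $b_{N,N}=-b_\infty>0$.

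\emph{Inductive step.} Assume that for some $0\le m<N$ we already have $L_{m+1,N}(z)=a_{m+1,N}-b_{m+1,N}\sqrt{1-z/\rho}+\BigO{|1-z/\rho|}$ with $a_{m+1,N},b_{m+1,N}>0$. Write $R_m(z)=1-4z^{a+d}(1-z^{bm})/(1-z^b)-4z^{c+d}L_{m+1,N}(z)$ for the radicand in \eqref{LmN_explicit}. Since $\rho<1$, the functions $4z^{a+d}(1-z^{bm})/(1-z^b)$, $4z^{c+d}$ and $1/(2z^d)$ are analytic at $z=\rho$ (the last one also nonzero there), so by the closure properties $R_m(z)$ has a local expansion of the admissible form, namely $R_m(z)=R_m(\rho)+\beta_m\sqrt{1-z/\rho}+\BigO{|1-z/\rho|}$ with $\beta_m=4\rho^{c+d}b_{m+1,N}>0$. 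By the proof of Lemma~\ref{l:02} we have $R_m(\rho)>0$, hence $t\mapsto\sqrt t$ is analytic near $R_m(\rho)$; substituting the expansion of $R_m$ into the Taylor series of $\sqrt{\cdot}$ about $R_m(\rho)$ gives $\sqrt{R_m(z)}=\sqrt{R_m(\rho)}+\frac{\beta_m}{2\sqrt{R_m(\rho)}}\sqrt{1-z/\rho}+\BigO{|1-z/\rho|}$. Dividing $1-\sqrt{R_m(z)}$ by $2z^d$ then yields $L_{m,N}(z)=a_{m,N}+\gamma_m\sqrt{1-z/\rho}+\BigO{|1-z/\rho|}$ with $a_{m,N}=(1-\sqrt{R_m(\rho)})/(2\rho^d)$ and $\gamma_m=-\beta_m/(4\rho^d\sqrt{R_m(\rho)})<0$. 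Here $a_{m,N}=L_{m,N}(\rho)>0$ because $L_{m,N}$ is a nonzero power series with nonnegative coefficients evaluated at $\rho>0$ (equivalently $R_m(\rho)<1$, which is part of the chain of inequalities used in the proof of Lemma~\ref{l:02}); and $\gamma_m<0$ already from the computation, or alternatively because a positive coefficient would contradict monotonicity of $L_{m,N}$ on $[0,\rho)$ while a zero coefficient would contradict $\rho$ being a genuine singularity (Lemma~\ref{l:02}). Setting $b_{m,N}=-\gamma_m>0$ completes the induction, and the case $m=0$ is the assertion.

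\emph{Main obstacle.} The only genuinely non-formal ingredient is the strict positivity $R_m(\rho)>0$ of the outer radicand: it is what keeps the outer square root from creating a new singularity at $\rho$ and what keeps us inside the class of functions with a square-root–type singularity, so that the expansion can be pushed down through all indices $m=N,N-1,\dots,0$. That positivity is precisely what the proof of Lemma~\ref{l:02} supplies; everything else is the routine closure of the relevant function class under arithmetic operations and composition, together with the standard positivity argument for the signs of $a_{m,N}$ and $b_{m,N}$.
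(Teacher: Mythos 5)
Your proposal is correct and follows essentially the same route the paper intends: the corollary is stated without explicit proof as a consequence of Lemma~\ref{l:02}, whose key content is precisely the strict positivity of the radicand in \eqref{LmN_explicit} at $z=\rho$, and the intended argument is the downward propagation of the square-root expansion from $L_{N,N}=L_\infty$ through \eqref{LmN_explicit} — exactly your induction, and the same bottom-up insertion of singular expansions that the paper carries out explicitly in the analogous Proposition~\ref{bnzero}. Your sign bookkeeping for $a_{m,N}$ and $b_{m,N}$ is also consistent with the recurrences given there.
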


\begin{lemma}
For every $m\ge 0$ the sequences $(a_{m,N})_{N\ge m}$ and $(b_{m,N})_{N\ge m}$ are convergent. 
\end{lemma}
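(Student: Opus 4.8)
The plan is to turn the recurrence \eqref{rec:L_mN} into recurrences for the constants $a_{m,N}$ and $b_{m,N}$ and then pass to the limit $N\to\infty$ with the help of the estimates already established, chiefly Lemma~\ref{speedlemma}.

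The sequence $(a_{m,N})_{N\ge m}$ is the easy one. Since $L_{m,N}(z)$ has a square-root singularity at $\rho$, it extends continuously to $z=\rho$ with $L_{m,N}(\rho)=a_{m,N}$, a finite value because $L_{m,N}(z)\preceq L_\infty(z)$ while $\sum_n\rho^n[z^n]L_\infty(z)<\infty$ by the Corollary to Proposition~\ref{i:prop:1}. By Lemma~\ref{speedlemma}, $L_{m,N}(z)\to L_m(z)$ uniformly on $|z|\le\rho$, hence $a_{m,N}=L_{m,N}(\rho)\to L_m(\rho)=:a_m$, which settles the first half.

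For $(b_{m,N})_{N\ge m}$ I would substitute $L_{k,N}(z)=a_{k,N}-b_{k,N}\sqrt{1-z/\rho}+\BigO{|1-z/\rho|}$ for $k=m,m+1$ into \eqref{rec:L_mN} and compare coefficients of $\sqrt{1-z/\rho}$; since $z^a\sum_{0\le j<m}z^{bj}$, $z^c$ and $z^d$ are analytic at $\rho$, this produces $b_{m,N}(1-2\rho^da_{m,N})=\rho^cb_{m+1,N}$. Reading off $2\rho^da_\infty=1-\rho^c$ from \eqref{mr:eq:2} and writing $\delta_{k,N}:=a_\infty-a_{k,N}\ge0$, the relation becomes $b_{m,N}=b_{m+1,N}/(1+2\rho^{d-c}\delta_{m,N})$, so, iterating it down from $b_{N,N}=-b_\infty>0$ (because $L_{N,N}=L_\infty$, in the notation of Proposition~\ref{i:prop:1}), I obtain the telescoping formula
\[ b_{m,N}=(-b_\infty)\prod_{k=m}^{N-1}\frac{1}{1+2\rho^{d-c}\delta_{k,N}}. \]

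It remains to let $N\to\infty$ in this product, for which I would use two ingredients. First, $\delta_{k,N}\le K_k(\rho)$: indeed $\mathcal L_{k,N}\supseteq\mathcal L_k$ gives $L_{k,N}(z)\succeq L_k(z)$, whence $\delta_{k,N}=L_\infty(\rho)-L_{k,N}(\rho)\le L_\infty(\rho)-L_k(\rho)=K_k(\rho)$, and by Lemma~\ref{speedlemma} one also has $\delta_{k,N}\to K_k(\rho)$ for each fixed $k$. Second, $\sum_{k\ge0}K_k(\rho)<\infty$: evaluating \eqref{mr:eq:7} at $z=\rho$ and setting $\mu_m:=\rho^d(L_\infty(\rho)+L_m(\rho))$, the inequality $\mu_m\le2\rho^dL_\infty(\rho)=1-\rho^c$ already used in the proof of Lemma~\ref{l:0} gives $1-\mu_m\ge\rho^c>0$ and $\rho^c/(1-\mu_m)\le1$, so iterating $K_m(\rho)=\frac{\rho^{a+bm}}{(1-\rho^b)(1-\mu_m)}+\frac{\rho^c}{1-\mu_m}K_{m+1}(\rho)$ and using $K_{m+J}(\rho)\to0$ as $J\to\infty$ yields $K_m(\rho)\le\sum_{j\ge m}\frac{\rho^{a+bj}}{(1-\rho^b)\rho^c}=\BigO{\rho^{bm}}$, which is summable in $m$. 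Then the summands $\log(1+2\rho^{d-c}\delta_{k,N})$ are dominated by the summable sequence $\log(1+2\rho^{d-c}K_k(\rho))$ and converge pointwise to it, so dominated convergence for series gives $\sum_{k=m}^{N-1}\log(1+2\rho^{d-c}\delta_{k,N})\to\sum_{k\ge m}\log(1+2\rho^{d-c}K_k(\rho))$, whence $b_{m,N}\to(-b_\infty)\prod_{k\ge m}(1+2\rho^{d-c}K_k(\rho))^{-1}=:b_m$, which is moreover strictly positive. The main obstacle is precisely the summability $\sum_kK_k(\rho)<\infty$: the naive bound coming from the fact that the cheapest term in $\mathcal K_k$ has size $\Theta(k)$ only yields $K_k(\rho)=\BigO{k^{-1/2}}$, which is not summable, so one genuinely has to exploit the recursive structure of \eqref{mr:eq:7} together with the sub-criticality inequality $\mu_m\le1-\rho^c$.
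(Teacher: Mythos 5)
Your treatment of $(a_{m,N})_{N\ge m}$ is exactly the paper's: evaluate at $z=\rho$ and invoke the uniform convergence of Lemma~\ref{speedlemma}. For $(b_{m,N})_{N\ge m}$ you take a genuinely different and much more explicit route. The paper's proof is a two-line compactness argument: since $\mathcal L_{m,N}\supseteq\mathcal L_{m,N+1}$, the functions $L_{m,N}(z)$ are coefficient-wise monotone in $N$, hence so are the singular coefficients $b_{m,N}$, and these are bounded by $|b_\infty|$; a bounded monotone sequence converges. You instead extract from \eqref{rec:L_mN} the exact one-step relation $b_{m,N}(1-2\rho^d a_{m,N})=\rho^c b_{m+1,N}$, rewrite it using $2\rho^d a_\infty=1-\rho^c$, telescope into an infinite product, and prove convergence of that product via the summability $\sum_k K_k(\rho)<\infty$ — which, as you rightly note, does not follow from the naive observation that the cheapest term of $\mathcal K_k$ has size $\Theta(k)$, and instead needs the iteration of \eqref{mr:eq:7} together with the subcriticality bound $\rho^c/(1-\mu_m)\le 1$ already used in Lemma~\ref{l:0}. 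Your computation checks out (and is consistent with the monotonicity the paper uses: increasing $N$ adds factors $\le 1$ and increases each $\delta_{k,N}$). What your approach buys is an explicit product formula for $b_m$ and, more importantly, the strict positivity $b_m>0$; the paper's monotone-and-bounded argument gives convergence but does not by itself exclude a vanishing limit, which would degenerate the square-root singularity asserted in the final lemma of the proof of Theorem~\ref{thm:1}. What it costs is length, and the (harmless, since \eqref{LmN_explicit} gives an algebraic function with a simple branch point) need for the full $a_{m,N}-b_{m,N}\sqrt{1-z/\rho}+\BigO{\left|1-z/\rho\right|}$ Puiseux expansion rather than just the asymptotic equivalence stated in the preceding corollary.
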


\begin{proof}
We know that $a_{m,N}=L_{m,N}(\rho)$ and $\lim_{N\to\infty} L_{m,N}(z)= L_m (z)$. Thus
$(a_{m,N})_{N\ge m}$ converges. 

Recall that $(L_{m,N}(z))_{N\ge m}$ is (coefficient-wise) decreasing; thus $(b_{m,N})_{N\ge m}$ is
increasing. Moreover, $b_{m,N}\le b_\infty$; thus $(b_{m,N})_{N\ge m}$ converges as well. 
\end{proof}

The following lemma completes the proof of Theorem~\ref{thm:1}
\begin{lemma}
Let $a_m:=\lim_{N\to\infty} a_{m,N}$ and $b_m:=\lim_{N\to\infty} b_{m,N}$. Then, as $z\to\rho$,  
$$
L_m\sim a_m-b_m \sqrt{1-\frac z\rho}.
$$
\end{lemma}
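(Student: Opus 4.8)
The plan is to combine the uniform convergence $L_{m,N}(z)\to L_m(z)$ on $|z|\le\rho$ (Lemma~\ref{speedlemma}) with the uniform square-root behaviour of the approximants $L_{m,N}(z)\sim a_{m,N}-b_{m,N}\sqrt{1-z/\rho}$ near $\rho$, and then pass to the limit in $N$. The delicate point is that we have two limiting processes, $z\to\rho$ and $N\to\infty$, and we must exchange them; the key will be that the error bound in Lemma~\ref{speedlemma} is \emph{uniform in $m$ and in $z$ on the closed disc}, so that the tail $L_m(z)-L_{m,N}(z)$ is controlled near the singularity as well.

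First I would make the singular behaviour of the $L_{m,N}$ quantitative. From the explicit formula \eqref{LmN_explicit} together with Proposition~\ref{i:prop:1} (giving the square-root expansion of $L_{N,N}=L_\infty$) and Lemma~\ref{l:02} (giving $\rho$ as common radius), an induction downward on $m$ from $N$ to $0$ via \eqref{rec:L_mN} shows that each $L_{m,N}$ has a Newton--Puiseux expansion $L_{m,N}(z)=a_{m,N}+b_{m,N}(1-z/\rho)^{1/2}+O(|1-z/\rho|)$ with $a_{m,N}>0$, $b_{m,N}<0$, exactly as in the Corollary. The point I would emphasize is that the implied $O$-constant in this expansion can be taken uniform in $N$: indeed at level $N$ it is fixed (Proposition~\ref{i:prop:1}), and each step of the downward recursion $L_{m,N}=(1-\sqrt{1-4z^{a+d}\frac{1-z^{bm}}{1-z^b}-4z^{c+d}L_{m+1,N}})^{1/2}$-type substitution preserves a square-root singularity with a controlled remainder because the radicand stays bounded away from $0$ at $\rho$ (Lemma~\ref{l:02}), and the recursion only runs for $N-m\le N$ steps but the relevant quantities ($\rho^c,\rho^d,\rho^{a+d}$, and $L_\infty(\rho)$) are all fixed; a careful bookkeeping gives a bound independent of $N$ and $m$. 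Call this uniform error $O(|1-z/\rho|)$, valid for all $m\le N$.

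Next I would fix a small sector $\Delta=\{z:|z|<\rho+\eta,\ |\arg(z-\rho)|>\phi\}$ and argue on $\Delta$. On one hand, $|L_m(z)-L_{m,N}(z)|=O(N^{-1/2})$ uniformly for $|z|\le\rho$ by Lemma~\ref{speedlemma}; by analytic continuation (both sides are analytic on a common neighbourhood of the closed disc minus $\rho$, and the difference is a power series with nonnegative coefficients bounded above by the tail of $L_\infty$) this estimate extends to $z\in\Delta$ with the bound replaced by $O(N^{-1/2})$ still, or at worst $o(1)$ as $N\to\infty$ uniformly on compact subsets of $\Delta$. On the other hand, for each fixed $N$,
\[
L_m(z)-\bigl(a_m-b_m\sqrt{1-z/\rho}\bigr)
= \bigl(L_m(z)-L_{m,N}(z)\bigr)
 + \bigl(L_{m,N}(z)-a_{m,N}+b_{m,N}\sqrt{1-z/\rho}\bigr)
 + \bigl((a_{m,N}-a_m)+(b_m-b_{m,N})\sqrt{1-z/\rho}\bigr).
\]
The middle term is $O(|1-z/\rho|)$ uniformly in $N$ by the previous paragraph; the last term is bounded by $|a_{m,N}-a_m|+|b_{m,N}-b_m|$, which tends to $0$ as $N\to\infty$ by the preceding Lemma; and the first term is $O(N^{-1/2})$. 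Hence, given $\eps>0$, first choose $N$ large enough that $|a_{m,N}-a_m|+|b_{m,N}-b_m|<\eps$ and the first term is $<\eps$ on the relevant region, then the above shows $|L_m(z)-(a_m-b_m\sqrt{1-z/\rho})|\le 2\eps+O(|1-z/\rho|)$; letting $z\to\rho$ and then $\eps\to0$ gives $L_m(z)=a_m-b_m\sqrt{1-z/\rho}+o(\sqrt{1-z/\rho}\,)$...

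To get the sharper form actually needed (an honest $\sim$, i.e.\ the remainder is $o(\sqrt{1-z/\rho})$ or better $O(1-z/\rho)$), I would instead keep $N$ coupled to the distance to $\rho$: set $N=N(z)\to\infty$ as $z\to\rho$, slowly enough that the $O(N^{-1/2})$ term from Lemma~\ref{speedlemma} is $o(\sqrt{1-z/\rho})$ is too much to hope for pointwise, so more realistically one shows $b_m<0$ (so the square-root term genuinely dominates) and concludes the asymptotic equivalence $L_m(z)\sim a_m-b_m\sqrt{1-z/\rho}$ in the weaker sense that $a_m=L_m(\rho)>0$ and the singular part is exactly $b_m\sqrt{1-z/\rho}$; that $b_m\neq 0$ follows since $b_{m,N}$ is increasing in $N$ and $b_{m,m}<0$ strictly (the approximant $L_{m}^{(m)}$-type function has a genuine square-root singularity by Lemma~\ref{conv}), so $b_m\le b_{m,m}<0$. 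I expect the main obstacle to be precisely this interchange of limits near the singularity — making rigorous that the uniform-on-the-disc convergence of Lemma~\ref{speedlemma} plus the uniform-in-$N$ control of the remainder terms in the Puiseux expansions together pin down the singular coefficient of $L_m$; once the bound $|L_m(z)-L_{m,N}(z)|$ is known to hold on a sector around $\rho$ (not just the disc), everything else is the routine three-term-split estimate above followed by transfer (Flajolet--Odlyzko) to recover \eqref{mainresult} with $C=-b_0/(2\sqrt\pi)$ for the closed-term case $m=0$ and $C=-b_m/(2\sqrt\pi)$ in general.
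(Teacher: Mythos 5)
Your overall strategy is the same as the paper's: combine the square-root expansions of the approximants $L_{m,N}$, the uniform convergence $L_{m,N}\to L_m$ on $|z|\le\rho$ (Lemma~\ref{speedlemma}), and the convergence of the sequences $(a_{m,N})_N$ and $(b_{m,N})_N$, then pass to the limit in $N$. The paper's own proof is exactly this three-ingredient argument compressed into three lines. To your credit, you put your finger on the point that the paper glosses over: for fixed $N$ the bound $|L_m(z)-L_{m,N}(z)|=O(N^{-1/2})$ is a constant in $z$, so in your three-term split the contribution $2\eps$ does not vanish as $z\to\rho$; letting $z\to\rho$ and then $\eps\to 0$ only recovers $L_m(z)\to a_m$, i.e.\ continuity at $\rho$, and says nothing about the coefficient of $\sqrt{1-z/\rho}$.

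However, your proposal does not close this gap. The claim that the split yields $L_m(z)=a_m-b_m\sqrt{1-z/\rho}+o(\sqrt{1-z/\rho})$ does not follow from the estimate you derived, and you immediately retract it; the fallback ("one shows \dots the singular part is exactly $b_m\sqrt{1-z/\rho}$") is an assertion of the lemma, not a proof of it. Coupling $N$ to $z$ would additionally require a quantitative rate for $b_{m,N}\to b_m$, which you do not have. There is also a sign slip in your nonvanishing argument: if $(b_{m,N})_N$ is increasing, its limit satisfies $b_m\ge b_{m,m}$, not $b_m\le b_{m,m}$, so strict negativity of $b_{m,m}$ does not by itself transfer to the limit (with the paper's convention $L_{m,N}\sim a_{m,N}-b_{m,N}\sqrt{1-z/\rho}$, $b_{m,N}>0$, the monotonicity and the bound away from zero must be checked in the correct direction, since an increasing sequence of negative singular coefficients could still tend to $0$). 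Your auxiliary claims --- a uniform-in-$N$ constant in the $O(|1-z/\rho|)$ remainder obtained by downward induction on \eqref{rec:L_mN}, and the extension of the $O(N^{-1/2})$ bound from the disc to a sector at $\rho$ --- are exactly the ingredients a fully rigorous version would need, but the "careful bookkeeping" over up to $N$ composition steps is precisely what has to be proved and is not carried out. In short: same route as the paper, correctly diagnosed difficulty, but the decisive step (exchanging $z\to\rho$ with $N\to\infty$ without losing the singular coefficient) is still missing.
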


\begin{proof}
We know already that for each $\eps>0$ and all $z$ sufficiently close to $\rho$ we have 
$$
\left|\frac{L_{m,N}(z)}{a_{m,N}-b_{m,N}\sqrt{1-\frac z\rho}} -1 \right|<\eps.
$$
By \eqref{difference_of_Ls} we know that $L_{m,N}(z)$ converges uniformly to $L_m (z)$. Thus we
can choose a sufficiently large $N$ such that $L_{m,N}(z)$, $a_{m,N}$ and $b_{m,N}$ are
arbitrarily close to $L_m (z)$, $a_m$ and $b_m$, respectively, and we are done.
\end{proof}


\section{Enumeration of lambda terms with prescribed number of abstractions}
\label{q_abstractions}

\subsection{Lambda terms containing $q$ abstractions} 

We consider the class of $m$-open lambda terms with exactly $q$ abstractions, denoted
$\mathcal{L}_{m,q}$. Note that $\mathcal{L}_{0,q}$ is then the class of closed lambda terms with exactly $q$ abstractions.

The number of $m$-open lambda terms of size $n$ with exactly $q$ abstractions is denoted by
$L_{m,q,n}$. As before, we define the generating function associated with the class
$\mathcal{L}_{m,q}$ by $L_{m,q}(z) = \sum_{n \geq 0} L_{m,q,n} z^n$.

We shall set up recurrence relations for the generating functions $L_{m,q}(z)$.
The objects in $\mathcal{L}_{m,0}$ are binary plane trees with sequences of successors of length
less than $m$ followed by a zero attached as leaves. Therefore $L_{0,0}(z) = 0$ 
(because without any abstraction the term cannot be closed) and for $m>0$
\[
L_{m,0}(z) = z^a \sum_{j=0}^{m-1} z^{b j} + z^d L_{m,0}(z)^{2}
\]
what can be solved as
\[
L_{m,0}(z) = \frac{1 - \sqrt{1-4 z^{a+d} \sum_{j=0}^{m-1} z^{b j}}}{2 z^d}.
\]
For general $q>0$, a term has either an abstraction as the root and $q-1$ abstractions below or an
application as the root, and the $q$ abstractions are distributed into $l$ being in the left
subterm, and $q-l$ being in the right subterm. Hence we obtain
\[
L_{m,q}(z) = z^c L_{m+1,q-1}(z) + z^d \sum_{l=0}^{q} L_{m,l}(z) L_{m,q-l}(z).
\]
From this equation we can easily derive an equation for $L_{m,q}(z)$ in terms of $L_{m+1,q-1}$ and
$L_{m,l}$ for $l<q$. We get 
\begin{equation}\label{zq:eq:1}
L_{m,q}(z) = \frac{1}{\sqrt{1-4 z^{a+d} \sum_{j=0}^{m-1} z^{b j}}} \left(z^c L_{m+1,q-1}(z) + z^d \sum_{l=1}^{q-1} L_{m,l}(z) L_{m,q-l}(z)\right).
\end{equation}
The number of closed lambda terms with exactly $q$ abstractions, which we are mainly interested in, is then described by
\[
L_{0,q}(z) = z^c L_{1,q-1}(z) + z^d \sum_{l=1}^{q-1} L_{0,l}(z) L_{0,q-l}(z).
\]

\begin{lemma}
Let $\delta_{m}(z) = \sqrt{1 - 4 z^{a+d} \sum_{j=0}^{m-1} z^{b j}}$. Then, for all $m, q \geq 0$,
there exists a rational function $R_{m,q}(z)$ such that 
\begin{equation}\label{zq:eq:2}
L_{m,q}(z) = - \frac{z^{c q} \delta_{m+q}(z)}{2 z^d \prod_{i=0}^{q-1} \delta_{m+i}(z)} + R_{m,q}(z).
\end{equation}
Moreover, the denominator of $R_{m,q}(z)$ is of the form $\prod_{i=0}^{q-1} \delta_{m+i}(z)^{\alpha_{i}}$ where the exponents $\alpha_{0}, \ldots, \alpha_{q-1}$ are positive integers.
\end{lemma}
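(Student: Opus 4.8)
The plan is to prove the claimed form of $L_{m,q}(z)$ by induction on $q$, using the recurrences \eqref{zq:eq:1} and the base case $L_{m,0}(z) = \bigl(1 - \delta_m(z)\bigr)/(2z^d)$ computed above. The inductive structure is forced by the shape of \eqref{zq:eq:1}: $L_{m,q}(z)$ is expressed in terms of $L_{m+1,q-1}(z)$ (lower $q$, shifted $m$) and the products $L_{m,l}(z)L_{m,q-l}(z)$ for $1 \le l \le q-1$ (strictly lower $q$, same $m$), all divided by one factor of $\delta_m(z)$. So I would set up a strong induction hypothesis: for all $m$ and all $q' < q$, equation \eqref{zq:eq:2} holds with the stated denominator structure for $R_{m,q'}$.

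For the inductive step, I would substitute the induction hypothesis into \eqref{zq:eq:1} and track two things separately: the ``singular part'' (the term with the lone $\delta$ in the numerator) and the rational remainder. For the first term of \eqref{zq:eq:1}, namely $\delta_m(z)^{-1} z^c L_{m+1,q-1}(z)$, plugging in the hypothesis gives a contribution $-\dfrac{z^{c} \cdot z^{c(q-1)}\delta_{m+q}(z)}{2z^d\,\delta_m(z)\prod_{i=0}^{q-2}\delta_{m+1+i}(z)} + \dfrac{z^c R_{m+1,q-1}(z)}{\delta_m(z)}$; the first piece is exactly $-\dfrac{z^{cq}\delta_{m+q}(z)}{2z^d\prod_{i=0}^{q-1}\delta_{m+i}(z)}$, which is the desired leading term, and the second piece is rational with denominator dividing $\delta_m(z)\prod_{i=1}^{q-2}\delta_{m+1+i}(z)^{\alpha_i}$, consistent with the claimed form. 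For the sum $\delta_m(z)^{-1}z^d\sum_{l=1}^{q-1}L_{m,l}(z)L_{m,q-l}(z)$, each product $L_{m,l}L_{m,q-l}$ expands (via the hypothesis applied to both factors) into a sum of four terms: rational times rational, rational times singular, singular times rational, and singular times singular. The key algebraic observation is that the singular-times-singular term carries a factor $\delta_{m+l}(z)\delta_{m+q-l}(z)$ in its numerator, and since $1 \le l \le q-1$ both indices $m+l$ and $m+q-l$ lie strictly between $m$ and $m+q$, so these $\delta$ factors cancel against $\delta$'s appearing in the denominator product $\prod_{i=0}^{q-1}\delta_{m+i}(z)$ — hence the whole product $L_{m,l}L_{m,q-l}$ is in fact \emph{rational}. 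Thus the entire sum contributes only to $R_{m,q}(z)$, and I would finish by checking that after multiplying by the extra $\delta_m(z)^{-1}$ the denominator is still a product of the $\delta_{m+i}(z)$ with positive integer exponents (the exponent of $\delta_m$ may grow, but that is allowed).

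The main obstacle I anticipate is the bookkeeping for the denominator structure of $R_{m,q}(z)$: one must verify that no factor $\delta_{m+q}(z)$ ever creeps into a denominator (it appears only in the distinguished numerator term) and that the exponents $\alpha_i$ stay positive integers rather than, say, picking up half-integer powers from an uncancelled single $\delta$. The cancellation in the singular$\times$singular cross terms is what prevents this — it is essential that in \eqref{zq:eq:1} the sum runs over $1 \le l \le q-1$ (excluding $l=0$ and $l=q$), so that neither index equals $m$ or $m+q$; if the endpoints were included, the index $m+q$ would appear and break the structure. I would state this cancellation as the crux of the argument and present it carefully, treating the rest as routine symbolic manipulation. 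A minor secondary point is handling the case $q=1$ separately as the base of the step (where the sum $\sum_{l=1}^{q-1}$ is empty and only the $z^c L_{m+1,0}/\delta_m$ term contributes), which is easily checked directly from the known closed forms of $L_{m,0}$ and $L_{m+1,0}$.
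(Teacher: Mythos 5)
Your overall strategy coincides with the paper's: induction on $q$, with the distinguished singular term extracted from $z^cL_{m+1,q-1}(z)/\delta_m(z)$ and everything else absorbed into $R_{m,q}(z)$; that part of your argument is correct and matches the paper's proof exactly. The problem is the step you yourself single out as the crux. The singular part of $L_{m,l}(z)$ is $-z^{cl}\delta_{m+l}(z)/\bigl(2z^d\prod_{i=0}^{l-1}\delta_{m+i}(z)\bigr)$, so the singular$\times$singular piece of $L_{m,l}L_{m,q-l}$ has numerator $\delta_{m+l}\delta_{m+q-l}$ over the denominator $\prod_{i=0}^{l-1}\delta_{m+i}\cdot\prod_{i=0}^{q-l-1}\delta_{m+i}$ --- not over $\prod_{i=0}^{q-1}\delta_{m+i}$ as you assert. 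The largest index in that denominator is $m+\max(l-1,q-l-1)$, so at most one of the two numerator factors can cancel: $\delta_{m+l}$ cancels only if $l\le q-l-1$, and $\delta_{m+q-l}$ only if $q-l\le l-1$, and these conditions are mutually exclusive. Concretely, for $q=3$, $l=1$ the product of the two singular parts is $z^{3c}\delta_{m+2}(z)/\bigl(4z^{2d}\delta_m(z)^2\bigr)$, which is not a rational function of $z$; likewise the singular$\times$rational cross terms (e.g.\ in $L_{m,1}(z)^2$ when $q=2$) leave a lone radical $\delta_{m+1}$ in a numerator. So the claim that $L_{m,l}L_{m,q-l}$ is rational in $z$ is false, and the argument as written does not close.

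What actually saves the induction --- and is all the lemma needs --- is the weaker observation that every radical occurring anywhere in $L_{m,l}L_{m,q-l}$ for $1\le l\le q-1$ has index at most $m+\max(l,q-l)\le m+q-1$, strictly below $m+q$. Hence the whole sum contributes only to $R_{m,q}$, which is a rational function of $z$ \emph{and} of $\delta_m,\dots,\delta_{m+q-1}$; this is the reading of ``rational'' under which the statement and the paper's proof make sense in the first place, since already $R_{m,1}=z^c/(2z^d\delta_m)$ carries an odd power of a radical in its denominator. The only structural facts needed downstream are that $\delta_{m+q}$ occurs solely in the distinguished leading term and that the denominator of $R_{m,q}$ involves only $\delta_m,\dots,\delta_{m+q-1}$, so that $R_{m,q}$ is singular no closer to the origin than $\xi_{m+q-1}>\xi_{m+q}$. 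Your instinct that excluding $l=0$ and $l=q$ from the sum is essential is right, but the reason is that it keeps the index $m+q$ out of the remainder, not that it produces a cancellation making the products rational.
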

\begin{proof}
The proof is based on induction on $q$. To start the induction observe that for all $m \geq 0$ we have $L_{m,0}(z) = - \frac{\delta_m(z)}{2 z^d} + \frac{1}{2 z^d}$, so $R_{m,0}(z) = \frac{1}{2 z^d}$.
Now, assume that~(\ref{zq:eq:2}) is true for $L_{m,0}(z), \ldots, L_{m,q}(z)$ for all $m \geq 0$. Then by~(\ref{zq:eq:1}) we have
\[
L_{m,q+1}(z) = \frac{1}{\delta_{m}(z)} \left(-\frac{z^{c (q+1)} \delta_{m+1+q}(z)}{2 z^d \prod_{i=0}^{q-1} \delta_{m+1+i}(z)} + z^c R_{m+1,q}(z) + z^d \sum_{l=1}^{q} L_{m,l}(z) L_{m,q+1-l}(z)\right).
\]
The induction hypothesis implies that each $L_{m,l}(z)$ is itself a rational function of $z, \delta_{m}(z),\ldots,\delta_{m+l-1}(z)$. Hence, by setting 
\begin{equation}\label{zq:eq:3}
R_{m,q+1}(z) = \frac{1}{\delta_{m}(z)} \left(z^c R_{m+1,q}(z) + z^d \sum_{l=1}^{q} L_{m,l}(z) L_{m,q+1-l}(z)\right)
\end{equation}
we obtain
\[
L_{m,q+1}(z) = -\frac{z^{c (q+1)} \delta_{m+1+q}(z)}{2 z^d \prod_{i=0}^{q} \delta_{m+i}(z)} + R_{m,q+1}(z).
\]
The expression in the denominator of the $R_{m,q}(z)$ comes readily from \eqref{zq:eq:2} and the
recurrence relation~\eqref{zq:eq:3}.
\end{proof}

Now, a singularity analysis of \eqref{zq:eq:2} leads to the desired asymptotics for the number of
lambda terms with $q$ abstractions. 

\begin{lemma}
Let $\xi_{m+q}$ denote the smallest positive root of $1 - 4 z^{a+d} \sum_{j=0}^{m+q-1} z^{b j}$.
Then the number of $m$-open lambda terms with exactly $q$ abstractions and size $n$ is $0$ if
$m,q=0$ or if $n$ is any positive integer that cannot be expressed in the form $(y+1)a+xb+qc+yd$
with nonnegative integers $x,y$ satisfying $x\le (m-1+q)(y+1)$; 
otherwise its asymptotic value is
\begin{equation} \label{q_abs}
L_{m,q,n} \sim 
C \frac{\xi_{m+q}^{-n}}{2 \sqrt{\pi n^3}},  \quad \mathrm{as} \quad n \to \infty,
\end{equation} 
where
\[
C = \frac{\xi_{m+q}^{c q - d} \sqrt{\xi_{m+q}^{a+d} \sum_{j=0}^{m+q-1} (a+d+bj) \xi_{m+q}^{b j}}}{\prod_{i=0}^{q-1} \delta_{m+i}(\xi_{m+q})}.
\]
\end{lemma}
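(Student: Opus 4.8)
The plan is to read off the asymptotics of $L_{m,q,n}$ from the closed-form representation~\eqref{zq:eq:2} by a straightforward singularity analysis. First I would locate the dominant singularity. By~\eqref{zq:eq:2}, $L_{m,q}(z)$ is a rational function of $z$ and the square roots $\delta_{m}(z),\dots,\delta_{m+q}(z)$. Each $\delta_{m+i}(z) = \sqrt{1 - 4z^{a+d}\sum_{j=0}^{m+i-1} z^{bj}}$ has its smallest positive singularity at $\xi_{m+i}$, the smallest positive root of the radicand; since the polynomial $1 - 4z^{a+d}\sum_{j=0}^{k-1} z^{bj}$ has nonnegative coefficients after the constant term and is decreasing in $k$ (more terms subtracted), the roots satisfy $\xi_{m+q} < \xi_{m+q-1} < \cdots < \xi_{m}$. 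Hence among all the $\delta$'s appearing, the one with the smallest singularity is $\delta_{m+q}$, located at $\xi_{m+q}$. I must then check that this singularity does not cancel: it appears in the numerator of the leading term of~\eqref{zq:eq:2} (not under the product in the denominator, since that product runs only up to $i=q-1$), and the rational remainder $R_{m,q}(z)$ has denominator $\prod_{i=0}^{q-1}\delta_{m+i}(z)^{\alpha_i}$, which is analytic and nonzero at $\xi_{m+q}$ (as $\xi_{m+q}<\xi_{m+i}$ for $i<q$). Therefore $L_{m,q}(z)$ is analytic on $|z|<\xi_{m+q}$ and has a genuine square-root singularity at $\xi_{m+q}$, provided $\xi_{m+q}$ is a simple root of $1-4z^{a+d}\sum_{j=0}^{m+q-1}z^{bj}$, which follows because the derivative of that polynomial is strictly negative on $(0,1)$ (all the coefficients being positive there after differentiation), exactly as in the proof of Proposition~\ref{i:prop:1}.

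Next I would extract the singular expansion. Near $z=\xi_{m+q}$ write $1 - 4z^{a+d}\sum_{j=0}^{m+q-1} z^{bj} = g(\xi_{m+q})(\xi_{m+q}-z) + O((\xi_{m+q}-z)^2)$ with $g(\xi_{m+q}) = 4z^{a+d-1}\sum_{j=0}^{m+q-1}(a+d+bj)z^{bj}\big|_{z=\xi_{m+q}} \cdot \xi_{m+q}^{-1}\cdot\xi_{m+q}$ — more cleanly, $-\frac{d}{dz}\big(1-4z^{a+d}\sum_{j=0}^{m+q-1}z^{bj}\big)\big|_{z=\xi_{m+q}} = 4\xi_{m+q}^{a+d-1}\sum_{j=0}^{m+q-1}(a+d+bj)\xi_{m+q}^{bj}$, so that $\delta_{m+q}(z) \sim \sqrt{g(\xi_{m+q})\,\xi_{m+q}}\,\sqrt{1-z/\xi_{m+q}}$ with $g(\xi_{m+q})\xi_{m+q} = 4\xi_{m+q}^{a+d}\sum_{j=0}^{m+q-1}(a+d+bj)\xi_{m+q}^{bj}$. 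Substituting into~\eqref{zq:eq:2}, the remainder $R_{m,q}$ and all the $\delta_{m+i}$ with $i<q$ contribute only an analytic factor evaluated at $\xi_{m+q}$, so
\[
L_{m,q}(z) = L_{m,q}(\xi_{m+q}) - \frac{\xi_{m+q}^{cq}\sqrt{4\xi_{m+q}^{a+d}\sum_{j=0}^{m+q-1}(a+d+bj)\xi_{m+q}^{bj}}}{2\xi_{m+q}^d\prod_{i=0}^{q-1}\delta_{m+i}(\xi_{m+q})}\sqrt{1-\frac{z}{\xi_{m+q}}} + O\!\left(\left|1-\frac{z}{\xi_{m+q}}\right|\right).
\]
Applying the transfer theorem of Flajolet and Odlyzko~\cite{DBLP:journals/siamdm/FlajoletO90} to the $\sqrt{1-z/\xi_{m+q}}$ term gives $[z^n]L_{m,q}(z) \sim \frac{C'}{2\sqrt{\pi n^3}}\xi_{m+q}^{-n}$ with $C'$ equal to the coefficient of $-\sqrt{1-z/\xi_{m+q}}$ above; absorbing the factor $2$ from $\sqrt{4(\cdots)}=2\sqrt{(\cdots)}$ into the constant yields exactly the stated expression for $C$.

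Finally I would handle the support statement, i.e.\ for which $n$ the coefficient $L_{m,q,n}$ is nonzero. The case $m=q=0$ gives $L_{0,0}(z)=0$, already noted. Otherwise a term in $\mathcal L_{m,q}$ has exactly $q$ abstractions (total size contribution $qc$), some number $y+1$ of zeros where $y$ is the number of applications (a binary-tree identity: a binary plane tree with $y$ internal application nodes has $y+1$ leaves, each leaf a zero preceded by a string of successors), contributing $(y+1)a + yd$, and a total of $x$ successors contributing $xb$; moreover each of the $y+1$ leaves carries a successor-string of length at most $m-1+q$ (since below $q$ abstractions a De Bruijn index may use up to $m+q$ levels, i.e.\ at most $m+q-1$ successors), so $0\le x\le(m-1+q)(y+1)$. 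Conversely, any $(x,y)$ in this range is realized by some term. Hence $L_{m,q,n}\neq 0$ iff $n=(y+1)a+xb+qc+yd$ for some such $x,y$; for all other positive $n$ the coefficient vanishes. This is a combinatorial bookkeeping argument and, together with the fact that the singularity $\xi_{m+q}$ is determined by exactly the polynomial $1-4z^{a+d}\sum_{j=0}^{m+q-1}z^{bj}$ whose support of exponents matches this arithmetic progression structure, it is the only step requiring care; the rest is routine singularity analysis. I expect the main (minor) obstacle to be verifying cleanly that the singularities $\xi_{m+i}$ for $i<q$, which appear in denominators of both the leading term and $R_{m,q}$, are all strictly larger than $\xi_{m+q}$ and hence cause no cancellation or competing singularity — this is where the monotonicity $\xi_{m+q}<\xi_{m+i}$ and Assumption~\ref{i:ass:1} (ensuring $b,c\ge1$ and $a+d\ge1$, so the relevant polynomials genuinely lose mass as the index grows) are used.
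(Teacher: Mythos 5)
Your proposal is correct and follows essentially the same route as the paper: locate the dominant singularity of \eqref{zq:eq:2} at $\xi_{m+q}$ using the monotonicity $\xi_{j}>\xi_{j+1}$ (so that $R_{m,q}$ and the $\delta_{m+i}$ with $i<q$ are regular there), expand $\delta_{m+q}$ to get a type-$\frac12$ singularity, transfer, and justify the support condition by counting $y+1$ leaves against $y$ applications with at most $m-1+q$ successors per leaf. Your write-up is somewhat more explicit than the paper's (simple-root check, the derivative computation for the constant, the realizability converse), but there is no substantive difference in method.
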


\begin{proof}
The restriction on the expressability of $n$ as a combination of $a,b,c,d$ results from the fact
that the terms have to have exactly $q$ abstractions and must be $m$-open. If there are $y$
applications then we have $y+1$ leaves, each of which needs at most $q$ abstractions to get
$m$-open. Thus the number of $S$'s in each leaf is bounded by $m-1+q$. So the total number $x$ of
$S$'s is bounded by $(m-1+q)(y+1)$. 

For proving \eqref{q_abs} let $\xi_m$ be the dominant singularity of $\delta_m(z)$, which is the smallest positive singularity of $1 - 4 z^{a+d} \sum_{j=0}^{m-1} z^{b j}$. 
Observe that for all $j \geq 0$ we have $\xi_{j} > \xi_{j+1}$. 
Therefore the dominant singularity of $R_{m,q}(z)$ is $\xi_{m+q-1}$ and it is further away from
the origin than $\xi_{m+q}$ the dominant singularity of the first term in the right-hand side
of~\eqref{zq:eq:2}.
Hence the dominant contribution to the asymptotics of $L_{m,q,n} = [z^n] L_{m,q}(z)$ comes from the singularity at $\xi_{m+q}$ of type $\frac12$. 
So we easily get that (similar as in the proof of Proposition~\ref{i:prop:1}) 
\[
L_{m,q}(z) = 
R_{m,q}(\xi_{m+q}) - C \left(1 - \frac{z}{\xi_{m+q}}\right)^{\frac12} + \BigO{\left|1 - \frac{z}{\xi_{m+q}}\right|},
\] 
where
\[
C = \frac{\xi_{m+q}^{c q - d} \sqrt{\xi_{m+q}^{a+d} \sum_{j=0}^{m+q-1} (a+d+bj) \xi_{m+q}^{b
j}}}{\prod_{i=0}^{q-1} \delta_{m+i}(\xi_{m+q})}. \qedhere
\]
\end{proof}

\subsection{Lambda terms containing at most $q$ abstractions}

Let $L_{m,\leq q}(z)$ denote the generating function for lambda terms with at most $q$ abstractions.
If $q=0$, for all $m \geq 0$ we get once more the generating function for binary plane trees with sequence of successors of length less than $m$ and zero attached as leaves: $L_{m,\leq 0}(z) = L_{m, 0}(z)$.
Otherwise, $L_{m,\leq q}(z) = \sum_{l=0}^{q} L_{m,q}(z)$ and we can apply the results that we have obtained for a fixed number of abstractions.
The dominant singularity of $L_{m,\leq q}(z)$ comes from $L_{m,q}(z)$, whereas the terms $L_{m,l}(z)$ for $l < q$ give negligible contributions to the asymptotics.
So the terms with exactly $q$ abstractions outnumber those with at most $q-1$ abstractions and determine the asymptotic behaviour of the number of lambda terms with at most $q$ abstractions.

\section{Number of terms in normal form}
\label{normal} 

An important rule in lambda calculus is the so-called $\beta$-reduction, which reads follows:
$$(\lambda x.M)\: N \quad \implies \quad M[x\to N].$$ In words, the application of
the term $\lambda x.M$ to some term $M$ can be reduced to the term $M$ where all occurrences
of the bound variable $x$ have been replaced by a copy of $N$. $\beta$-reductions can we performed
on every subterm. A lambda term containing a subterm on which a $\beta$-reduction can be performed
is called a $\beta$-redex, otherwise it is called to be in normal form. In lambda calculus one is
interested in terms being in normal form. Thus we are keen to know how many of all lambda terms of
given size are also in normal form. 

From the discussion above it is clear that a lambda term is a $\beta$-redex if it contains a
subterm of the form $\cA\times (\cU\times \cL) \times \cL$. Let $\cB_m$ denote the class of all
$m$-open lambda terms which are not a $\beta$-redex and $\cB_\infty$ the class of all lambda terms
not being a $\beta$-redex. The exclusion of subterms of the above described form yields the
specification 
\begin{align*} 
\cB_m &= \seq_{\le m-1} (\cS) \times \cZ \;\cup\; \cU  \times \cB_{m+1} \;\cup\; \cA \times
(\cA\times \cB_m^2 \;\cup \; \seq_{\le m-1} (\cS) \times \cZ \times \cB_m), 
\end{align*} 
and from there we can derive directly the system of functional equations 
\begin{align*}      
B_m(z) &= z^a\sum_{j=1}^{m-1} z^{bj} + z^cB_{m+1}(z) + z^d \((z^d B_m(z)^2)\cdot B_m(z) + z^a
\sum_{j=1}^{m-1} z^{bj} B_m(z)\) \\
&= z^a\frac{1-z^{bm}}{1-z^b} + z^cB_{m+1}(z) + z^{a+d}\frac{1-z^{bm}}{1-z^b} B_m(z) + z^{2d}
B_m(z)^3 
\end{align*} 
for the associated generating functions. 

In an analogous way as for $L_m(z)$ it can be shown that all the functions $B_m(z)$ have the same
dominant singularity and that it equals the dominant singularity of $B_infty(z)$. 

For $m=\infty$ we have a similar functional equation. Putting all the terms on one side yields: 
\[
z^{2d} B_\infty(z)^3 + \(\frac{z^{a+d}}{1-z^b}+z^c-1\) B_\infty(z) + \frac{z^a}{1-z^b} = 0.
\]
This equation can be solved for $B_\infty(z)$ and we obtain three solutions from Cardano's
formula. The simplest is   
\begin{equation} \label{solB}
\frac{\sqrt[3]{\sqrt{X(z)^2 + \frac1{27}\(X(z)-4(1-z^c)\)^3}-X(z)}}{2z^d} - 
\frac{X(z)-4(1-z^c)}{6z^d \sqrt[3]{\sqrt{X(z)^2 + \frac1{27}\(X(z)-4(1-z^c)\)^3}-X(z)}}
\end{equation}  
where 
\[
X(z)=\frac{4z^{a+d}}{1-z^b}. 
\]
The other two solutions differ from this one by a third root of unity as multiplicative factor.
Thus it is sufficient to analyze \eqref{solB} to get an estimate for the fraction of lambda terms
in normal form. 

Obviously, singularities of the expression \eqref{solB} can only appear if one of the radicands
becomes zero. We know that $\rho$ must be a lower bound for the dominant singularity of
$B_\infty(z)$ and, as there are terms in normal form of any size, 1 is an upper bound. Moreover,
for $0\le z \le\rho$ we have $X(z)\le (1-z^c)^2$, where equality holds only
for $z=\rho$. This implies that for $0< z\le \rho$ we have 
\begin{align*} 
X(z)^2 + \frac1{27}\(X(z)-4(1-z^c)\)^3 & \le (1-z^c)^4 + \frac1{27}\((1-z^c)^2-4(1-z^c)\)^3 \\
&= (1-z^c)^4+ (1-z^c)^3 \frac{(-3-z^c)^3}{27} \\
&= (1-z^c)^3 \( 1-z^c - \(1+\frac{z^c}3\)^3\) < 0.
\end{align*} 
As the first inequality above is strict if $z<\rho$, this implies that $X(z)<0$ on the whole
interval $[0,\rho]$. Consequently, we also have 
\[
27 \sqrt{X(z)^2 + \frac1{27}\(X(z)-4(1-z^c)\)^3}-27X(z) < 0 
\]
for $0\le z\le \rho$, which means that the dominant of $B_\infty(z)$ must be strictly larger than
$\rho$. 

From \eqref{solB} we observe that $B_\infty(z)$ is singular if either 
\begin{equation} \label{firstequ} 
\sqrt{X(z)^2 + \frac1{27}\(X(z)-4(1-z^c)\)^3}-X(z) = 0
\end{equation} 
or 
\begin{equation} \label{secondequ}  
X(z)^2 + \frac1{27}\(X(z)-4(1-z^c)\)^3 = 0.
\end{equation} 
Note that we are searching for solutions of \eqref{firstequ} or \eqref{secondequ} which lie in the
interval $(\rho,1]$ and 
that $X(z)$ is a positive and strictly increasing function on the interval $[0,1)$. 
This implies that \eqref{firstequ} is equivalent to $X(z)=4(1-z^c)$. Therefore, the smallest
positive solution $\tilde\rho$ of \eqref{secondequ} is smaller than that of \eqref{firstequ}. 

The exponential term of the asymptotic number of lambda terms in normal form is determined by
$\tilde\rho$. In order to find out the subexponential factor, we 
consider the second term of the Taylor expansion of 
\begin{equation} \label{definef}
f(z):=X(z)^2 + \frac1{27}\(X(z)-4(1-z^c)\)^3
\end{equation} 
at $z=\tilde \rho$. The derivative is 
\begin{align} 
f'(z)&= 2X(z)X'(z)+ \frac19\(X(z)-4(1-z^c)\)^2 (X'(z)+4cz^{c-1}) \\ 
&=2X(z)X'(z)+X(z)^{4/3} (X'(z)+4cz^{c-1}) \label{ableitung}
\end{align} 
where we recalled that $X(z)>0$ as well as $X'(z)>0$ and thus the square
$\frac19\(X(z)-4(1-z^c)\)^2$, which is obviously positive, must be equal to $X(z)^{4/3}$. The last
expression \eqref{ableitung} consists of positive terms only, hence $f'(\tilde\rho)\neq 0$. Thus,
locally around $z=\tilde\rho$ the singularity in \eqref{solB} stems from the square-roots under
the third roots, the third roots themselves being regular and nonzero. This leads again to a
singularity of type $\frac12$, provided that there are no cancellations. 

To show this, observe that $\sqrt{f(z)}\sim \sqrt{f'(\tilde\rho)(z-\tilde\rho)}=:R$. Obviously, 
$R$ has a singularity of type $\frac12$, and we are supposed to show that this behaviour propagates
to the function given in \eqref{solB}. Setting 
\[ 
X:=X(\tilde\rho) \text{ and } Y:=\frac{4(1-\tilde\rho^c)-X(\tilde\rho)}{3} 
\] 
and keeping in mind that $X^2=Y^3$, the function \eqref{solB} is asymptotically equivalent to 
\begin{align*}
\frac{\sqrt[3]{R-X}}{2\tilde\rho^d}-\frac{Y}{2\tilde\rho^d \sqrt[3]{R-X}} &=
\frac{1}{2\tilde\rho^d}\(-\sqrt[3]{X} \sqrt[3]{1-\frac RX} -\frac{Y}{-\sqrt[3]{X} \sqrt[3]{1-\frac
RX}} \) \\ 
&=\frac{1}{2\tilde\rho^d}\(-\sqrt{Y} \sqrt[3]{1-\frac RX} +\frac{\sqrt{Y}}{\sqrt[3]{1-\frac
RX}}\) \\ 
&=\frac{\sqrt{Y}}{2\tilde\rho^d} \(-\(1-\frac RX\)^{1/3} + \(1-\frac RX\)^{-1/3} \) \\
&=\frac{\sqrt{Y}}{2\tilde\rho^d} \(\frac{2}{3X}\cdot R + O(R^2)\).  
\end{align*} 
Since $\frac{2}{3X}>0$ this shows that we have indeed a singularity of type $\frac12$. Summarizing
all our considerations leads to the following result.

\begin{theorem}
Let $\tilde\rho$ be the smallest positive root $f(z)$, the function defined in \eqref{definef}. 
Then there exists a positive constant $C$ (depending on $a, b, c, d$ and $m$) such that the number
of $m$-open lambda terms of size $n$ which are in normal form satisfies
\[
\left[z^n\right] B_{m}(z) \sim C n^{-\frac32} \tilde\rho^{-n}, \text{ as } \nti.
\]

Moreover, if $\rho$ is as in Theorem~\ref{thm:1}, then $\tilde\rho>\rho$. Therefore, the fraction
of lambda terms in normal form in the set of $m$-open lamda terms of size $n$ is exponentially
small, as $n$ tends to infinity, and the rate at which this fraction tends to zero is
$\Theta\( \(\frac{\rho}{\tilde\rho}\)^n\)$. 
\end{theorem}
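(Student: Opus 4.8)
The plan is to follow the same two-step strategy as the proof of Theorem~\ref{thm:1}: first pin down the location of the dominant singularity of $B_m(z)$ and show it is the same for all $m\in\mathbb N\cup\{\infty\}$ and equal to $\tilde\rho$, and then show that the singularity is of square-root type, so that the transfer theorems of Flajolet and Odlyzko~\cite{DBLP:journals/siamdm/FlajoletO90} give the stated $n^{-3/2}\tilde\rho^{-n}$ asymptotics. For the first step I would argue, exactly as in Lemmas~\ref{l:0}--\ref{l:1}, that since $\cB_m\subseteq\cB_\infty\subseteq\cL_\infty$ the radius of convergence of $B_m(z)$ is at least $\rho$ and at most the radius of $B_\infty(z)$; monotonicity of the coefficients in $m$ (every $m$-open term not being a $\beta$-redex is also $(m+1)$-open and not a $\beta$-redex) combined with the recurrence for $B_m(z)$ — which lets one express $B_{m+1}$ rationally in terms of $B_m$ and vice versa, the spurious pole at $0$ cancelling — forces all the $B_m(z)$ to have a common radius of convergence. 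The computation already carried out in the excerpt shows that $X(z)<0$ on $[0,\rho]$, hence \eqref{solB} is analytic past $\rho$, so this common radius is $\tilde\rho$, the smallest root of $f$ in $(\rho,1]$, and the excerpt's analysis of \eqref{firstequ} vs.\ \eqref{secondequ} identifies $\tilde\rho$ as the smallest positive solution of \eqref{secondequ}.

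For the second step I would invoke the Taylor expansion computation already performed in the excerpt: $f'(\tilde\rho)\neq0$ because \eqref{ableitung} is a sum of strictly positive terms, so $\sqrt{f(z)}$ has a clean square-root singularity at $\tilde\rho$, and the displayed chain of asymptotic equivalences shows that this propagates through the Cardano expression \eqref{solB} — the two cube-root terms conspire (using $X^2=Y^3$) so that the leading behaviour is $\tfrac{\sqrt Y}{2\tilde\rho^d}\cdot\tfrac{2}{3X}\sqrt{f'(\tilde\rho)(z-\tilde\rho)}+O(|z-\tilde\rho|)$, i.e.\ of the form $a_\infty+b_\infty\sqrt{1-z/\tilde\rho}+O(|1-z/\tilde\rho|)$ with $b_\infty\neq0$. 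A transfer theorem then yields $[z^n]B_\infty(z)\sim C_\infty n^{-3/2}\tilde\rho^{-n}$. To lift this to finite $m$ I would mimic the $L_{m,N}(z)$ approximation scheme from the proof of Theorem~\ref{thm:1}: define $B_{m,N}(z)$ by $B_{N,N}(z)=B_\infty(z)$ and the same recurrence for smaller indices, check that these are generating functions of combinatorial superclasses of $\cB_m$ with $d(B_m(z),B_{m,N}(z))<2^{-N}$, that $(B_{m,N}(z))_N$ decreases to $B_m(z)$ uniformly on $|z|\le\tilde\rho$ at rate $O(N^{-1/2})$ (bounding the tail coefficients by those of $B_\infty$), deduce a square-root expansion $B_{m,N}(z)\sim a_{m,N}-b_{m,N}\sqrt{1-z/\tilde\rho}$ with convergent $a_{m,N},b_{m,N}$, and pass to the limit to get the expansion for $B_m(z)$ with a positive constant $b_m$. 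The $n^{-3/2}\tilde\rho^{-n}$ asymptotics for $[z^n]B_m(z)$ follows, with $C=C_{a,b,c,d,m}=b_m/(2\sqrt\pi)>0$.

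The comparison $\tilde\rho>\rho$ is immediate from the computation in the excerpt: the strict inequality $X(z)<(1-z^c)^2$ for $z<\rho$ was shown to give $f(z)<0$ on all of $[0,\rho]$, in particular $f(\rho)<0$, while $f$ is continuous and (by the discussion of \eqref{secondequ}) changes sign at $\tilde\rho\in(\rho,1]$; hence $\tilde\rho>\rho$ strictly. The final assertion about the decay of the fraction of normal-form terms is then purely formal: by Theorem~\ref{thm:1} the number of $m$-open terms of size $n$ is $\Theta(n^{-3/2}\rho^{-n})$ and we have just shown the number of those in normal form is $\Theta(n^{-3/2}\tilde\rho^{-n})$, so the ratio is $\Theta((\rho/\tilde\rho)^n)$, which is exponentially small since $\rho<\tilde\rho$. (Strictly, for $n$ in the support of both counting sequences; the $\Theta$ should be read along that subsequence, matching the convention in Theorem~\ref{thm:1}.)

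I expect the main obstacle to be the same as in Theorem~\ref{thm:1}: establishing that the finite-$m$ functions $B_m(z)$ genuinely inherit the square-root singularity of $B_\infty(z)$ rather than having the singularities cancel in $B_m(z)=B_\infty(z)-(B_\infty(z)-B_m(z))$, which is precisely why the $B_{m,N}$ approximation machinery and the uniform $O(N^{-1/2})$ convergence estimate are needed. The algebraic bookkeeping for the Cardano solution — verifying that no further cancellation occurs among the three cube roots and that the branch chosen in \eqref{solB} is the combinatorially relevant one with a positive power-series expansion at the origin — is routine but must be done carefully; everything else reduces to rerunning the arguments of Section~2 with $\cB$ in place of $\cL$.
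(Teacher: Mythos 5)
Your proposal follows essentially the same route as the paper: the paper's own treatment consists of the Cardano analysis of $B_\infty(z)$ (the sign of $f$ on $[0,\rho]$, the non-vanishing of $f'(\tilde\rho)$, and the non-cancellation of the square root in \eqref{solB}), which you reuse verbatim, plus the bare assertion that the reduction from $B_\infty$ to $B_m$ works ``in an analogous way as for $L_m(z)$'' --- exactly the Section~2 machinery ($B_{m,N}$ approximants, uniform $O(N^{-1/2})$ convergence, monotone coefficients $b_{m,N}$) that you propose to rerun. The one slip is your claim that $\cB_m\subseteq\cB_\infty$ bounds the radius of convergence of $B_m(z)$ from \emph{above} by that of $B_\infty(z)$ (containment gives the reverse inequality); this is harmless, since the needed upper bound on the radius is precisely what the positivity of the limiting coefficient $b_m$ in your approximation scheme supplies.
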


\begin{remark}
For concretely given $a,b,c,d$ the singularities $\rho$ and $\tilde\rho$ can be approximated
numerically. For instance, for natural counting ($a=b=c=d=1$) we have $\rho\approx 0.295598$ and
$\tilde\rho\approx 0.318876$, which shows that the fraction of terms in normal form among terms of
size $n$ is $\Theta(\alpha^n)$ with $\alpha\approx 0.926999$. For binary lambda terms we get
$\rho\approx 0.509308$ and $\tilde\rho\approx 0.526219$. Thus the fraction of terms in normal form
is $\Theta(\beta^n)$ with $\beta\approx 0.967864$.
\end{remark}

\section{A random sampler for lambda terms}\label{Boltzmann}

In this section we will discuss the random generation of lamdba term, where we require that, if
conditioning on a specified size, the sampler draws according to the uniform distribution. It is
well-known that Boltzmann samplers have exactly this property and that they are well-suited to a
generating function approach. For simplicity, we will only treat the natural counting case
($a=b=c=d=1$) in this section. However, all our results also apply to the general case.

\subsection{Introduction to singular Boltzmann sampling}

Boltzmann samplers have been introduced by Duchon \emph{et al.}~\cite{DFLS04} in 2004. It is a universal
framework to create automatically a sampler for objects belonging to any specified combinatorial
class. All the constructors of the symbolic method have been "`interpreted"' in terms of samplers.
For instance, a Boltzmann sampler for the product of two classes $\mathcal{A} \times \mathcal{B}$
returns a couple obtained by Boltzmann sampling from each class ($\Gamma (A\times B)=(\Gamma
A,\Gamma B)$). Contrary to the recursive method, these samplers do not need to pre-compute
the coefficients of the series associated to the class, but directly deal with the generating
function which has to be evaluated. These new samplers do not output an object of a specified
size, but one considers the possible outputs of a given size, then the object is drawn uniformly
from all objects of same size. In other word, the conditional distribution, when conditioned on the
size of the output, is the uniform distribution on the set of objects of the desired size. This is
in general enough in most applications. 

In the case of singular Boltzmann samplers, the distribution of the output size is governed by
the Boltzmann distribution, \emph{i.e.}, 
$Prob(N=n)=\dfrac{a_n \rho^n}{A(\rho)}$ where $A(z)$ is the generating function of the
combinatorial class which is sampled and $\rho$ is the dominant singularity of $A(z)$.

\subsection{The sampler}

The idea here is to define a superclass $\cL_{N,0}$ of the class $\cL_0$ of closed lambda terms,
then sample lambda terms from this class and reject them if they are not closed. 

We start to define $\cL_{N,0}$, $\cL_{N,1}$ and so forth by using the specification
\eqref{m_open_def}, but for $\cL_{N,N}$ we drop the constraint on the leaves. The generating
functions associated with these classes satisfy therefore the following system of functional
equations: 

\begin{equation} \label{boltz}
\begin{cases}
    L_{N,0} &= z L_{N,1} + z L_{N,0}^2 \enspace , \\
    L_{N,1} &= z L_{N,2} + z L_{N,1}^2 + z \enspace , \\
    L_{N,2} &= z L_{N,3} + z L_{N,2}^2 + z + z^2 \enspace , \\
    \ldots &= \ldots \enspace , \\
    L_{N,N-1} &= z L_{N,N} + z L_{N,N-1}^2 + z \dfrac{1 - z^{N-1}}{1 - z} \enspace ,\\
    L_{N,N} &= z L_{N,N} + z L_{N,N}^2 + \dfrac{z}{1 - z} \enspace
\end{cases}
\end{equation}

By dropping the constraint on the leaves in $\cL_{N,N}$ we may have De Bruijn indices which are
too large for a term in $\cL_{N,N}$ to be $N$-open, and this propagates down to $\cL_{N,0}$. But
clearly, in order to violate the closedness condition when $N$ is large, we need a large De Bruijn
index and thus a term of large size. Hence the generating function associated with $\cL_{N,0}$
converges to $L_0(z)$, both coefficient-wise as well as uniformly on the interval $[0,\rho]$.

On the level of Boltzmann sampling we will sample a term from $\cL_{N,0}$ and reject it if it is
not closed. To get even more efficient, we perform some (controlled) extra rejections to obtain
closed lambda terms. 
In detail, let us denote by $\Gamma L_k$ the sampler which draws objects from $\cL_{N,k}$. The
samplers $\Gamma L_0,\dots,\Gamma L_N$ are automatically obtained from the specification which 
led to \eqref{boltz}. But the procedure actually has to call $\Gamma L_N$ before it can complete
the calls of the lower order samplers. If a De Bruijn index larger than
$N$ is drawn in $\Gamma L_N$, then we add a further rejection, since we will never get a closed
term when pushing the output through all lower order samplers $\Gamma L_k$, $k<N$.

Let us state some preliminary remarks before we analyze this sampler. Without this extra rejection,
the sampler lives in the traditional framework of singular Boltzmann sampling. Consequently, we
can apply the complexity theorems about this algorithm which state that such a sampler is linear
in the size of the output. If we only keep objects drawn in some window $](1-\varepsilon) n,
(1+\varepsilon) n[$, then the sampler stays linear \cite{BGR15,DFLS04}.

So, we essentially have to study the impact of the extra-rejection. A first observation is 
that in $\Gamma L_N$, the De Bruijn indices are drawn according to independent geometric
laws of parameter $\rho<1$. So, the average De Bruijn index drawn is $\dfrac{\rho}{1-\rho}$ 
(which is quite small). Consequently, the probability to draw an unbound variable, \emph{i.e.}, a
De Bruijn index which is too large (larger than $N$), is very low for large $N$. 

The next section is dedicated to a precise analysis of this fact.

\subsection{The complexity analysis}

We are going to give an upper bound for the cost of the extra rejection by assuming that 
the costs for each rejected object in proportional to its size. In fact, rejecting is cheaper, 
because we stop the building process as soon as the first unbound variable is observed. 
But it turns out that this upper bound is sufficient, since we will eventually reject an
arbitrarily small number of generated terms by choosing $N$ large enough.

\begin{figure}[h]
\begin{center}
\includegraphics[scale=0.4]{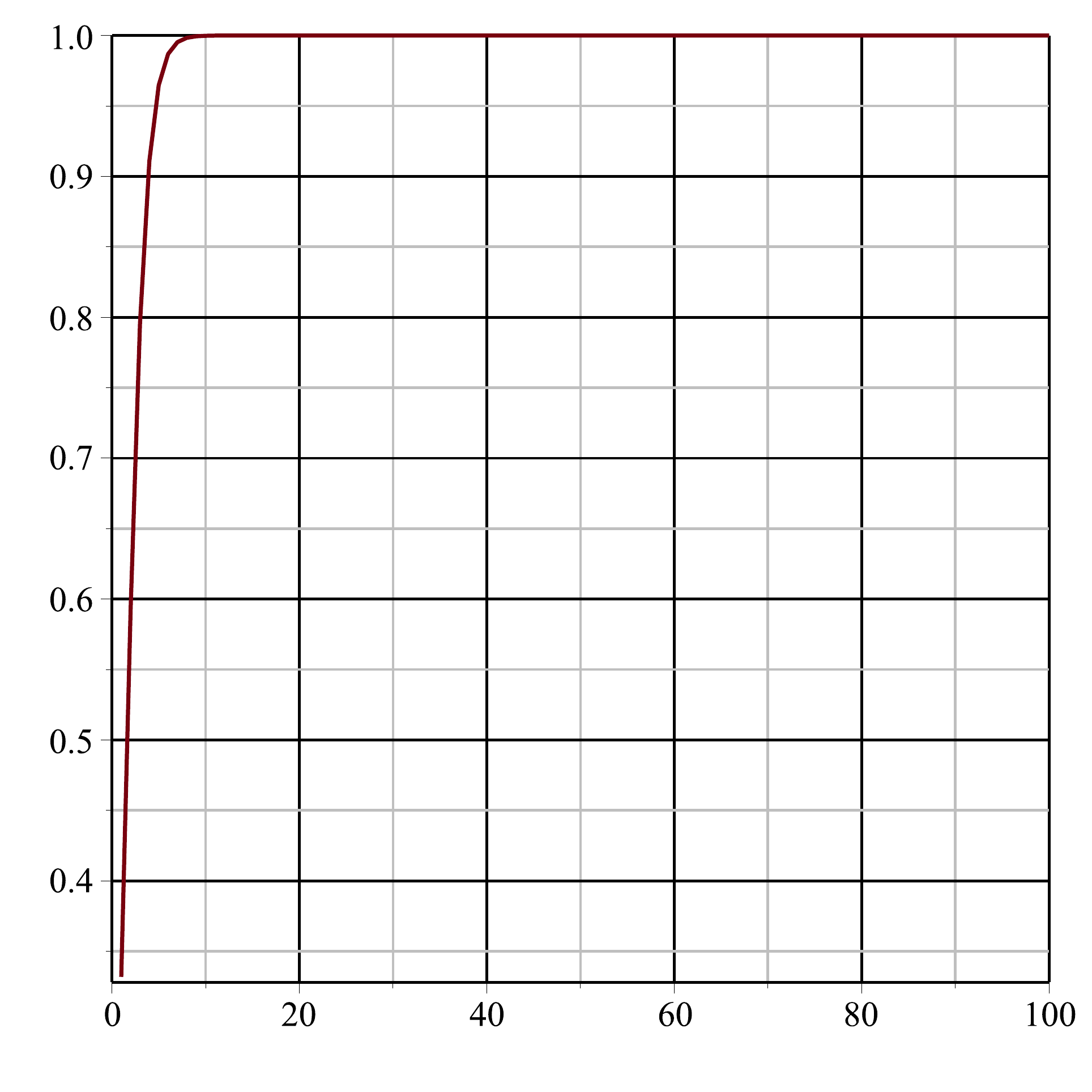}
   \caption{Proportion of closed terms in $\mathcal{L}_{N,0}$ for $N$ from 1 to 100}
   \end{center}
\end{figure}

For this purpose, let us determine the proportion of closed lambda terms of size $n$ in the class
$\mathcal{L}_{N,0}$. This proportion is just 
$\dfrac{[z^n]L_{0}(z)}{[z^n]L_{N,0}(z)}$. But we know that $L_{N,0}(z)$ has the same radius of
convergence as $L_{0}(z)$ and that the sequence converges uniformly to $L_{0}(z)$. Even a more
precise result holds, which can be seen by inspecting the system \eqref{boltz}. 

\begin{proposition}\label{bnzero}
For $m=0,1,\dots,N$ there are positive constants $a_{N,m}$ and $b_{N,m}$ such that $L_{N,m}(z)\sim
a_{N,m}-b_{N,m} \sqrt{1-\frac z\rho}$, as $z\to\rho$ in such a way that $\arg(z-\rho)\neq 0$. 
\end{proposition}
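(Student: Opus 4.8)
The plan is to mimic the argument used for the functions $L_{m,N}(z)$ in the proof of Theorem~\ref{thm:1}, adapted to the system \eqref{boltz}. The key structural fact is that the last equation of \eqref{boltz} is a genuine quadratic equation for $L_{N,N}(z)$ alone, which already appeared (up to the size specialization $a=b=c=d=1$) as the function $L_h^{(h)}(z)$ in Lemma~\ref{conv}: solving it gives
\[
L_{N,N}(z)=\frac{1-z-\sqrt{(1-z)^2-\frac{4z^2}{1-z}}}{2z},
\]
which by Proposition~\ref{i:prop:1} (with $a=b=c=d=1$) has a square-root singularity of the stated form at $\rho=\mathrm{RootOf}\{(1-z)^2(1-z)-4z^2\}$; that is, $L_{N,N}(z)\sim a_{N,N}-b_{N,N}\sqrt{1-z/\rho}$ with $a_{N,N},b_{N,N}>0$. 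This starts the induction at level $N$.

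Next I would run a downward induction on $m$ from $N$ to $0$. Suppose $L_{N,m+1}(z)\sim a_{N,m+1}-b_{N,m+1}\sqrt{1-z/\rho}$ with both constants positive. From the $m$-th equation of \eqref{boltz}, namely $L_{N,m}=zL_{N,m+1}+zL_{N,m}^2+z\frac{1-z^{m}}{1-z}$, solve the quadratic for $L_{N,m}(z)$ to obtain
\[
L_{N,m}(z)=\frac{1-\sqrt{1-4z^2\frac{1-z^{m}}{1-z}-4z^2L_{N,m+1}(z)}}{2z}.
\]
The radicand is an analytic function of $z$ near $\rho$ except for the $\sqrt{1-z/\rho}$ contributed by $L_{N,m+1}$, so it has the form $g(z)-4z^2b_{N,m+1}\sqrt{1-z/\rho}+\BigO{|1-z/\rho|}$ with $g$ analytic. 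The decisive point is that this radicand is strictly positive at $z=\rho$: this is exactly the computation in the proof of Lemma~\ref{l:02} (specialized to $a=b=c=d=1$), which shows $4\rho^2\frac{1-\rho^{m}}{1-\rho}+4\rho^2L_{N,m+1}(\rho)<1$ using $L_{N,m+1}(\rho)\le L_\infty(\rho)$, $\rho^d L_\infty(\rho)=(1-\rho)/2$ and $4\rho^{a+d}/(1-\rho^b)=(1-\rho^c)^2$. Hence near $\rho$ the outer square root is the composition of the analytic function $\sqrt{\cdot}$ (around a positive value) with a function having a $\sqrt{1-z/\rho}$ singularity, so $\sqrt{\text{radicand}}$ again has a local expansion $\text{const}+\text{const}\cdot\sqrt{1-z/\rho}+\BigO{|1-z/\rho|}$; the linear-in-$\sqrt{\cdot}$ coefficient is $\frac{4\rho^2 b_{N,m+1}}{2\sqrt{\text{radicand}(\rho)}}>0$, which after dividing by $2z$ and accounting for the sign yields $b_{N,m}>0$. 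Positivity of $a_{N,m}=L_{N,m}(\rho)>0$ is immediate since $L_{N,m}$ is a nonzero power series with nonnegative coefficients. Finally, $\rho$ is indeed the dominant singularity of every $L_{N,m}$: by Lemma~\ref{l:02} (again $a=b=c=d=1$) the radius of convergence is exactly $\rho$ for all $m=0,\dots,N$, so no smaller singularity intervenes and the local expansion governs the behaviour on the slit disk.

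The main obstacle is the bookkeeping in the inductive step: one must check that composing the analytic branch of $\sqrt{\cdot}$ with the singular expansion genuinely preserves the square-root type and does not accidentally kill the $\sqrt{1-z/\rho}$ term. As indicated, this cannot happen because the coefficient $\frac{4\rho^2 b_{N,m+1}}{2\sqrt{\text{radicand}(\rho)}}$ is a strictly positive number times $b_{N,m+1}>0$; the only way it could degenerate is if the radicand vanished at $\rho$, which the Lemma~\ref{l:02} estimate rules out. Everything else is routine: the quadratic formula, Newton--Puiseux, and the transfer-ready form of the expansion are exactly as in Proposition~\ref{i:prop:1}.
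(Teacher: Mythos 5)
Your proposal is correct and follows essentially the same route as the paper: solve the last equation of \eqref{boltz} explicitly to get the square-root expansion of $L_{N,N}=L_\infty$, then propagate it downward through the quadratic formula, with positivity of the radicand at $z=\rho$ (the Lemma~\ref{l:02} computation) guaranteeing that the coefficient $b_{N,m}=\rho\, b_{N,m+1}/\sqrt{\mathrm{radicand}(\rho)}$ stays positive; if anything you are more explicit than the paper about why the radicand cannot vanish. The only blemish is a harmless sign slip in your displayed expansion of the radicand (substituting $L_{N,m+1}\sim a_{N,m+1}-b_{N,m+1}\sqrt{1-z/\rho}$ into $-4z^2L_{N,m+1}$ gives $+4z^2b_{N,m+1}\sqrt{1-z/\rho}$, not $-$), which you silently correct when you state the final coefficient.
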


\begin{proof}
The last equation of \eqref{boltz} is the quadratic equation for $L_{N,N}(z)=L_\infty(z)$. Solving
it yields the explicit solution for $L_{N,N}(z)$ and gives in particular
\[
L_{N,N}=\frac{1-z-\sqrt{\frac{1-3z-z^2-z^3}{1-z}}}{2z}\sim a_{N,N}-b_{N,N} \sqrt{1-\frac z\rho}
\]
with $\rho$ being the smallest positive root of $1-3z-z^2-z^3$ and 
\[
a_{N,N}=\frac{1-2\rho}{2\rho} \qquad b_{N,N}=\frac{\sqrt{3-6\rho-\rho^2}}{2\rho\sqrt{1-\rho}}.
\]
Using the other equations of \eqref{boltz}, solving successively from bottom up, we obtain nothing
but solutions of quadratic equations. Inserting the singular expansions one after another we
obtain a system of two recurrence relations for $a_{N,m}$ and $b_{N,m}$: 
\begin{align*} 
a_{N,m}&= \frac{1-\sqrt{1-(1-\rho^2)(1-\rho^m)-4\rho^2 a_{N,m+1}}}{2\rho}, \\
b_{N,m}&=\frac{\rho}{\sqrt{1-(1-\rho^2)(1-\rho^m)-4\rho^2 a_{N,m+1}}} \cdot b_{N,m+1}.
\end{align*}
From this system it is obvious that $b_{N,m}>0$ and hence all function $L_{N,m}(z)$ have indeed a
singularity of type $\frac12$. The positivity of $a_{N,m}$ is trivial, because
$a_{N,m}=L_{N,m}(\rho)$, $\rho>0$ and $L_{N,m}(z)$ has positive coefficients. 
\end{proof}

Now, let us return to our sampling problem and note that
$\dfrac{[z^n]L_{0}(z)}{[z^n]L_{N,0}(z)}=\dfrac{b_0}{b_{N,0}}$. 
Proposition~\ref{bnzero} gives us a procedure to
compute $[z^n]L_{N,0}(z)$ asymptotically. The next proposition will tell us that we do not have to
go very far, \emph{i.e.}, an $N$ of moderate size is sufficient.

\begin{proposition}
The fraction $\dfrac{b_0}{b_{N,0}}$ tends to 1 exponentially fast, as $N$ tends to
infinity. 
\end{proposition}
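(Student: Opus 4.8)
The plan is to exploit the recursion for the $b_{N,m}$ derived in the proof of Proposition~\ref{bnzero} and compare it, step by step, with the analogous recursion satisfied by the $b_m$ of the infinite system. Recall that $b_{N,m}/b_{N,m+1}=\rho/\sqrt{D_{N,m}}$ with $D_{N,m}:=1-(1-\rho^2)(1-\rho^m)-4\rho^2 a_{N,m+1}$, and that the limiting quantities satisfy $b_m/b_{m+1}=\rho/\sqrt{D_m}$ with $D_m:=1-(1-\rho^2)(1-\rho^m)-4\rho^2 a_{m+1}$, where $a_m=L_m(\rho)$ and $a_{N,m}=L_{N,m}(\rho)$. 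Telescoping from level $N$ down to level $0$, we get
\[
\frac{b_0}{b_{N,0}}=\frac{b_N}{b_{N,N}}\prod_{m=0}^{N-1}\sqrt{\frac{D_{N,m}}{D_m}},
\]
so it suffices to control the prefactor $b_N/b_{N,N}$ and the product. Since $b_{N,N}=b_\infty$ is a fixed positive constant and $b_N\to b_\infty$ as $N\to\infty$ (indeed $b_N\uparrow b_\infty$ by the monotonicity established earlier, and $L_N(z)\to L_\infty(z)$ with the $O(N^{-1/2})$ rate of Lemma~\ref{speedlemma}), the prefactor tends to $1$; the genuinely exponential gain must come from the product.

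First I would bound each factor $\sqrt{D_{N,m}/D_m}$. Since $\mathcal L_{m,N}\supseteq\mathcal L_m$ we have $a_{N,m}\ge a_m$, hence $D_{N,m}\le D_m$ and every factor is $\le 1$; thus $b_0/b_{N,0}\le (b_N/b_{N,N})\le 1+o(1)$, which already re-proves convergence. For the rate, write $\sqrt{D_{N,m}/D_m}=1-\varepsilon_{N,m}$ with
\[
\varepsilon_{N,m}\le \frac{D_m-D_{N,m}}{2D_m}=\frac{2\rho^2\,(a_{N,m+1}-a_m)}{D_m}.
\]
The key point is that $a_{N,m+1}-a_m=L_{m+1,N}(\rho)-L_{m+1}(\rho)$ is the value at $\rho$ of the generating function counting exactly those terms that lie in $\mathcal L_{m+1,N}\setminus\mathcal L_{m+1}$; by the argument in Lemma~\ref{speedlemma} this is at most $R:=\sum_{n>N}\rho^n[z^n]L_\infty(z)$, the tail of the convergent series for $L_\infty(\rho)$. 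Since $[z^n]L_\infty(z)\sim C\rho^{-n}n^{-3/2}$, we have $\rho^n[z^n]L_\infty(z)=\Theta(n^{-3/2})$, so in fact $R=\Theta(N^{-1/2})$ — which is only polynomial, not exponential. So a naive bound on a single $\varepsilon_{N,m}$ is too weak, and the whole product of $N$ such factors could a priori be as large as $(1-cN^{-1/2})^{N}$, which is exponentially small but with the "wrong" (stretched-exponential) shape.

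The hard part, and the place where the real work lies, is therefore to show that the differences $a_{N,m+1}-a_m$ decay geometrically in $N-m$, uniformly in $m$. The mechanism is that a term in $\mathcal L_{m,N}\setminus\mathcal L_m$ must, by construction, contain a nested chain of at least $N-m$ abstractions before the "relaxation" at level $N$ can be felt, and each abstraction contributes a factor $z^c$ (here $z=\rho<1$) to the generating function; more precisely, from $\mathcal L_{m,N}=\seq_{\le m-1}(\mathcal S)\times\mathcal Z\cup\mathcal U\times\mathcal L_{m+1,N}\cup\mathcal A\times\mathcal L_{m,N}^2$ one gets a bound of the form $a_{N,m}-a_m\le \rho^c\,(1-\rho^d(L_\infty(\rho)+L_m(\rho)))^{-1}(a_{N,m+1}-a_{m+1})$, and since the constant $\gamma:=\rho^c/(1-\rho^d(L_\infty(\rho)+L_0(\rho)))$ is $<1$ by the estimate $\rho^d(L_\infty(\rho)+L_m(\rho))\le 1-\rho^c$ used in Lemma~\ref{l:0}, iterating $N-m$ times gives $a_{N,m+1}-a_m\le \gamma^{\,N-m-1}\cdot R'$ for a fixed constant $R'$. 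Feeding this into the bound for $\varepsilon_{N,m}$ yields $\sum_{m=0}^{N-1}\varepsilon_{N,m}=O\bigl(\sum_{m}\gamma^{N-m}\bigr)=O(\gamma^{N}\cdot\gamma^{-N})$—wait, summed the right way, $\sum_{k\ge 1}\gamma^k$ converges, but each term is weighted by its distance from $N$, so $\sum_{m=0}^{N-1}\gamma^{N-m}=\sum_{k=1}^{N}\gamma^{k}\le \gamma/(1-\gamma)$, which does not vanish; the correct statement is that the product $\prod_{m=0}^{N-1}(1-\varepsilon_{N,m})$ differs from its infinite counterpart $\prod_{m\ge 0}(1-\varepsilon_{\infty,m})$ by a tail $\prod_{m=0}^{N-1}$ versus the full product, and one shows directly that $b_0/b_{N,0}=\prod_{m\ge N}(1+\delta_{N,m})$-type correction with $\sum_{m\ge N}|\delta_{N,m}|=O(\gamma^{N})$. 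Thus $|b_0/b_{N,0}-1|=O(\gamma^N)$ with $\gamma<1$, which is the claimed exponential convergence. I would present the geometric-decay lemma for $a_{N,m}-a_m$ as the crux, deduce the telescoped estimate, and close by combining it with $b_N/b_{N,N}\to 1$.
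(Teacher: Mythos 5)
Your telescoping of the $b$-recursion is a reasonable skeleton, but the step you yourself identify as ``the crux'' --- the geometric-decay lemma $a_{N,m+1}-a_{m+1}\le \gamma^{\,N-m-1}R'$ with a \emph{fixed} $\gamma<1$ --- is false, and the error is in the direction of an inequality. The exact contraction factor at level $m$ is $\rho^c\big/\big(1-\rho^d(L_{m,N}(\rho)+L_m(\rho))\big)$. To get a bound that is uniform in $m$ you replace $L_m(\rho)$ by $L_0(\rho)$; but $L_m(\rho)\ge L_0(\rho)$ (since $\cL_0\subseteq\cL_m$), so this substitution \emph{decreases} the denominator's subtrahend and hence gives a \emph{lower} bound on the factor, not an upper bound. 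The correct uniform upper bound over $0\le m\le N-1$ uses $L_{m,N}(\rho)\le L_\infty(\rho)$ and $L_m(\rho)\le L_N(\rho)$, which yields $\rho^c/\big(1-\rho^d(L_\infty(\rho)+L_N(\rho))\big)=\big(1+\Theta(N^{-1/2})\big)^{-1}$ (using $1-2\rho^dL_\infty(\rho)=\rho^c$ and $L_\infty(\rho)-L_N(\rho)=\Theta(N^{-1/2})$). These factors tend to $1$ as $m$ approaches $N$, so no $N$-independent geometric rate is available, and your final paragraph --- which notices that $\sum_{m}\gamma^{N-m}$ does not vanish and then appeals to an unspecified ``tail product'' correction --- does not repair this. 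A secondary gap: your prefactor $b_N/b_{N,N}=b_N/b_\infty$ is only shown to tend to $1$, with no rate; for your decomposition to deliver the claim this ratio would itself need to approach $1$ exponentially fast, which you do not establish (and if it only does so polynomially, your split separates $b_0/b_{N,0}$ into two factors whose deviations from $1$ must cancel, which term-by-term bounds cannot detect).

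Ironically, the bound you compute and then dismiss as having ``the wrong (stretched-exponential) shape'' is essentially the one the paper proves. The paper's route is to telescope the functional differences directly: from the recursion, $L_{0,N}(z)-L_0(z)=(L_\infty(z)-L_N(z))\prod_{\ell=0}^{N-1}\frac{z^c}{1-z^d(L_{\ell,N}(z)+L_\ell(z))}$; bounding every factor at $z=\rho$ by $\big(1+c_2N^{-1/2}\big)^{-1}$ as above gives $|L_{0,N}(\rho)-L_0(\rho)|\le \frac{c_1}{\sqrt N}\big(1+\frac{c_2}{\sqrt N}\big)^{-N}\approx \frac{c_1}{\sqrt N}e^{-c_2\sqrt N}$, and this super-polynomially small bound is transferred to $b_{N,0}-b_0$ via Cauchy's formula. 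So the attainable (and intended) rate here is $e^{-\Theta(\sqrt N)}$ rather than $\gamma^N$; if you redirect your estimates toward that target, keeping the product over all $N$ levels rather than trying to localize the decay at a single level, your argument can be completed.
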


\begin{proof}
Consider the difference $L_{m,N}(z)-L_m(z)$. We have 
\[
L_{m,N}(z)-L_m(z) = z(L_{m+1,N}(z)-L_{m+1}(z)) + z(L_{m,N}(z)^2-L_m(z)^2, 
\]
for all $m=0,1\dots,N-1$, which implies 
\[
L_{m,N}(z)-L_m(z) = \frac{z}{1-z(L_{m,N}(z)+L_\infty(z))} (L_{m+1,N}(z)-L_{m+1}(z)).
\]
Iterating gives 
\[
L_{0,N}(z)-L_0(z) = (L_\infty(z)-L_N(z)) 
\prod_{\ell=0}^{N-1} \frac{z}{1-z(L_{\ell,N}(z)+L_\ell(z))}. 
\]
The function on the left-hand side has only nonnegative coefficients and therefore, in order to
estimate the function it is enough to consider only positive $z$. For positive $z$ we have
$L_{m,N}(z)< L_\infty(z)$ and $L_m(z)< L_N(z)$. Thus 
\begin{align} 
|L_{0,N}(z)-L_0(z)|&< |L_\infty(z)-L_N(z)| \prod_{\ell=0}^{N-1} \left|
\frac{z}{1-z(L_{\infty}(z)+L_N(z))} \right|  \nonumber \\
&= |L_\infty(z)-L_N(z)| \cdot \left| 
\frac{z}{1-2zL_{\infty}(z)+z(L_infty(z)-L_N(z))} \right|^N. \label{auxil_ineq}
\end{align} 

By an analogous reasoning as in the proof of Lemma~\ref{speedlemma} we can derive that
$|L_\infty(z)-L_N(z)|=O(N^{-1/2})$. This can be refined to
$|L_\infty(\rho)-L_N(\rho)|=\Theta(N^{-1/2})$. To see this, note that as $b_N<b_\infty$, the square-root
singularities of $L_\infty(z)$ and $L_N(z)$ cannot cancel when building simply the difference.
Thus $L_\infty(z)-L_N(z)$ has a singularity of type $\frac12$ as well and thus its coefficients
satisfy an asymptotic of the form $\rho^{-n}n^{-3/2}$, and they are zero only for $n<N$. So, the
modulus of the difference must be of order $N^{-1/2}$ when evaluated at $z=\rho$. 

With this in conjunction with $1-2\rho L_\infty(\rho)=\rho$ the inequality \eqref{auxil_ineq}
becomes   
\[
|L_{0,N}(z)-L_0(z)|\le |L_{0,N}(\rho)-L_0(\rho)|< 
\frac{c_1}{\sqrt N} \(\frac{C}{1+\frac{c_2}{\sqrt N}}\)^N 
\]
which means that the difference $|L_{0,N}(z)-L_0(z)|$ is exponentially small. Likewise, 
$(b_{N,0}-b_0)/b_{N,0}$ is exponentially small, because the coefficients $b_0$ and $b_{N,0}$ can
be expressed as integrals of $L_{0,N}(z)$ and $L_0(z)$ by means of Cauchy's integration formula. 
\end{proof}

Obviously, our results generalize to the general case with arbitrary $a,b,c,d$ as well. Exploiting
these results for the present case ($a=b=c=d=1$), shows the efficiency of the sampler: Already for
$N=20$, the proportion of closed terms is 0.999999998 such that the sampler will almost never
reject a drawn object. 

\subsection{Experiments}

In this section, we deal with a Boltzmann sampler using $N=20$. It is quite interesting to
analyze the behavior of this sampler. The first graphic shows the dynamic of the choice inside the
system of equations. The first observation is that this dynamic tends quickly to a stable law: the
probability to draw a unary node tends to 0.2955977425, the probability to draw a binary node as
the probability to draw a leaf tends to 0.3522011287. In particular, this fact shows that the
number of leaves in a lambda term of size $n$ is asymptotically of order $\mu n$ with $\mu\approx 0.3522011287$.

\begin{figure}[h]
\begin{center}
\includegraphics[scale=0.4]{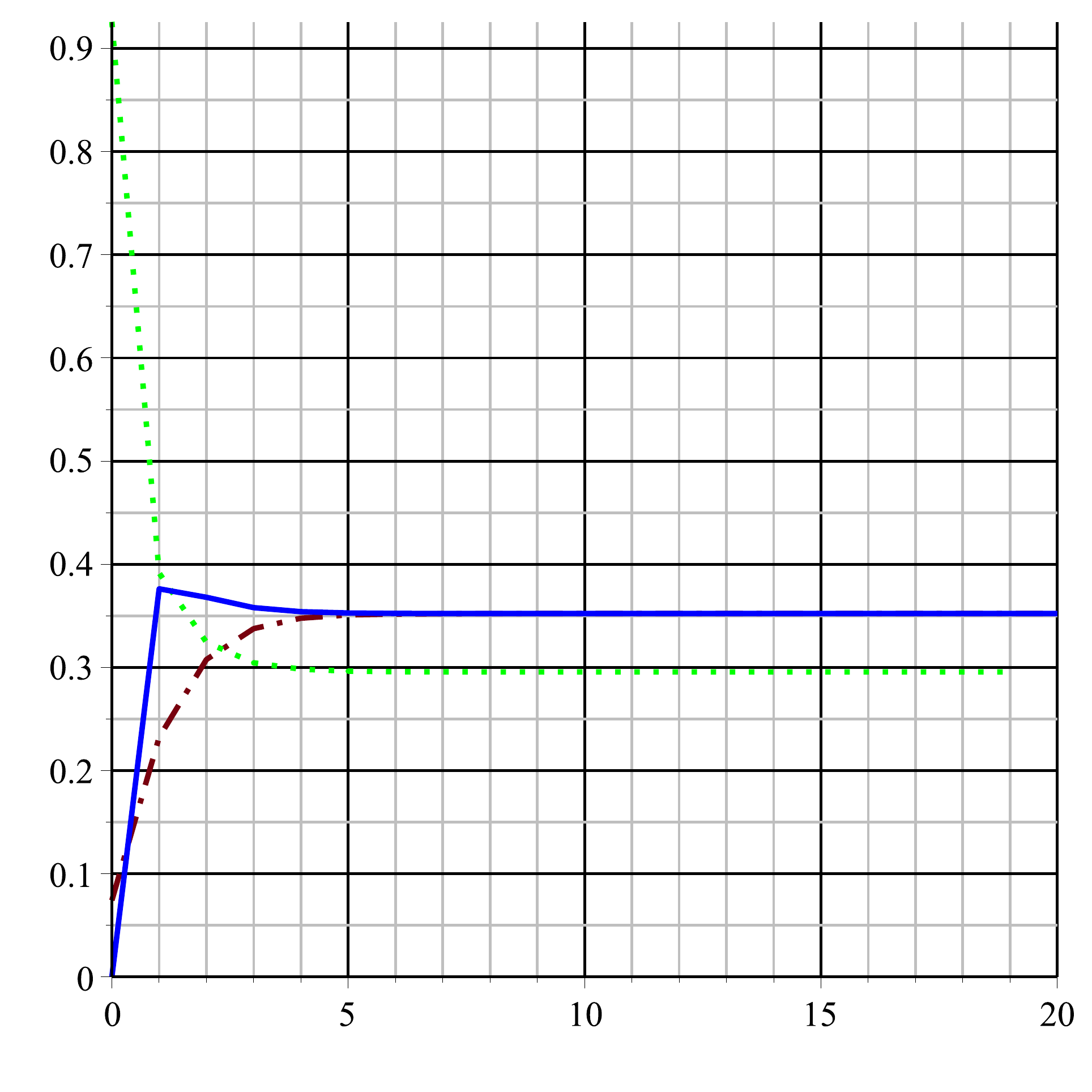}
   \caption{Example for $N=20$ of the evolution of the probability in $\Gamma L_i$ for  $i$ from 0
to 20. The green dot line represents the probability to draw a unary node (abstraction), the red
dashdot line represents the probability to draw a binary node (application), the solid blue line
represents the probability to draw a leaf (variable).}
   \end{center}
\end{figure}

Invoking the uniform convergence, we can reach more precise results on parameters. In particular,
consider $X_N$ the random variable of the number of leaves in a uniform random term from
$\mathcal{L}_{N,0}$, conditioned on the size. 

\begin{figure}[h]
\begin{center}
\includegraphics[scale=0.25]{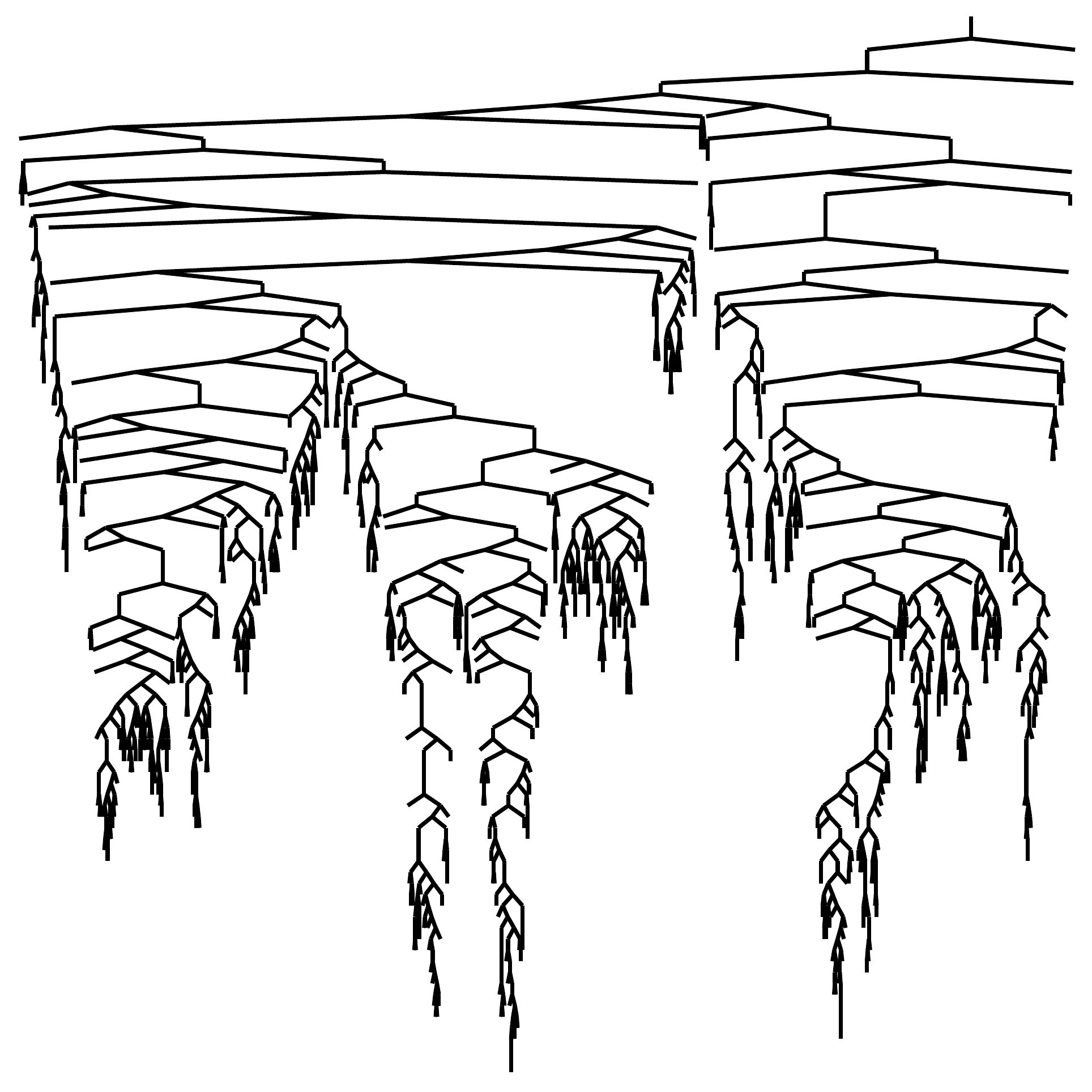}
\includegraphics[scale=0.25]{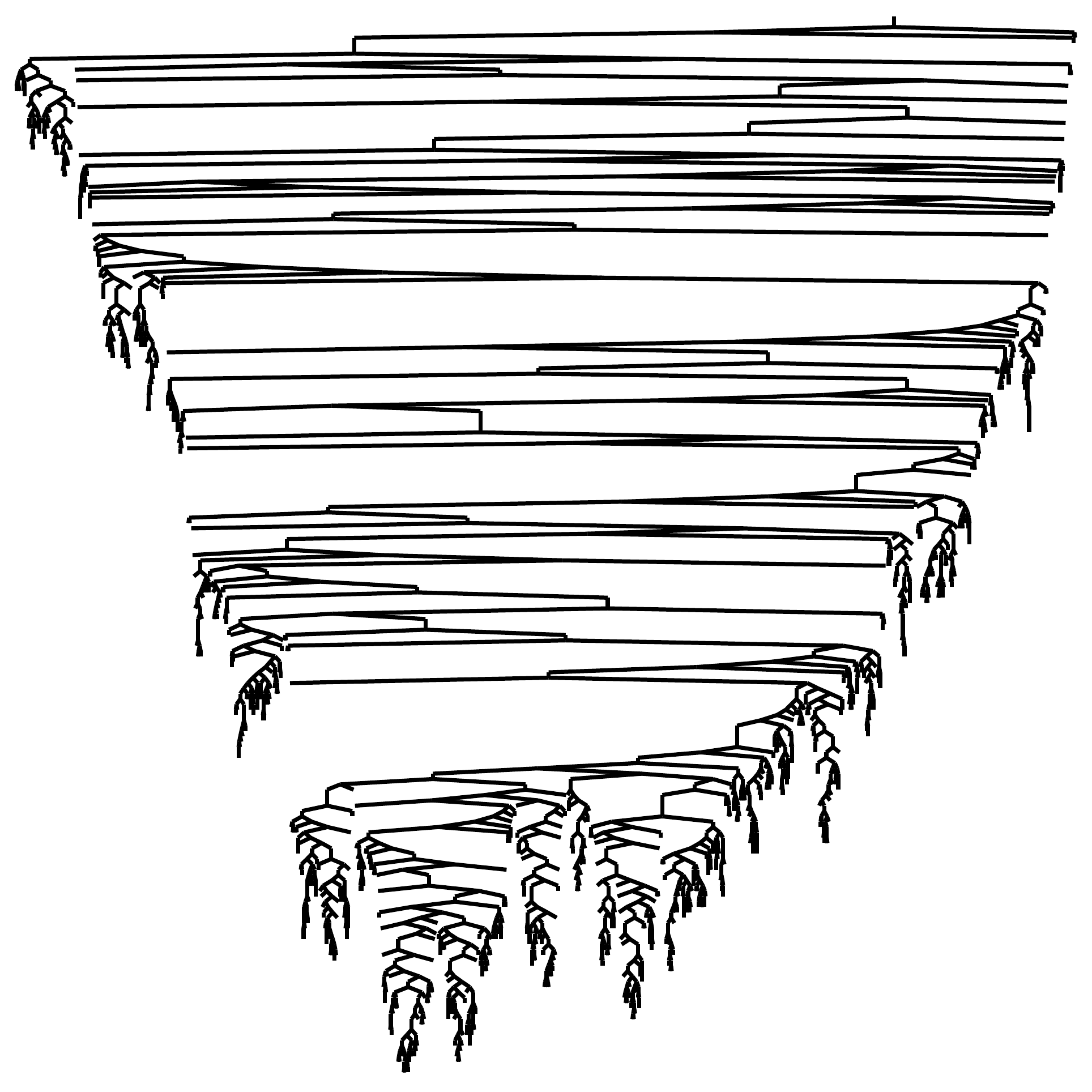}
\includegraphics[scale=0.25]{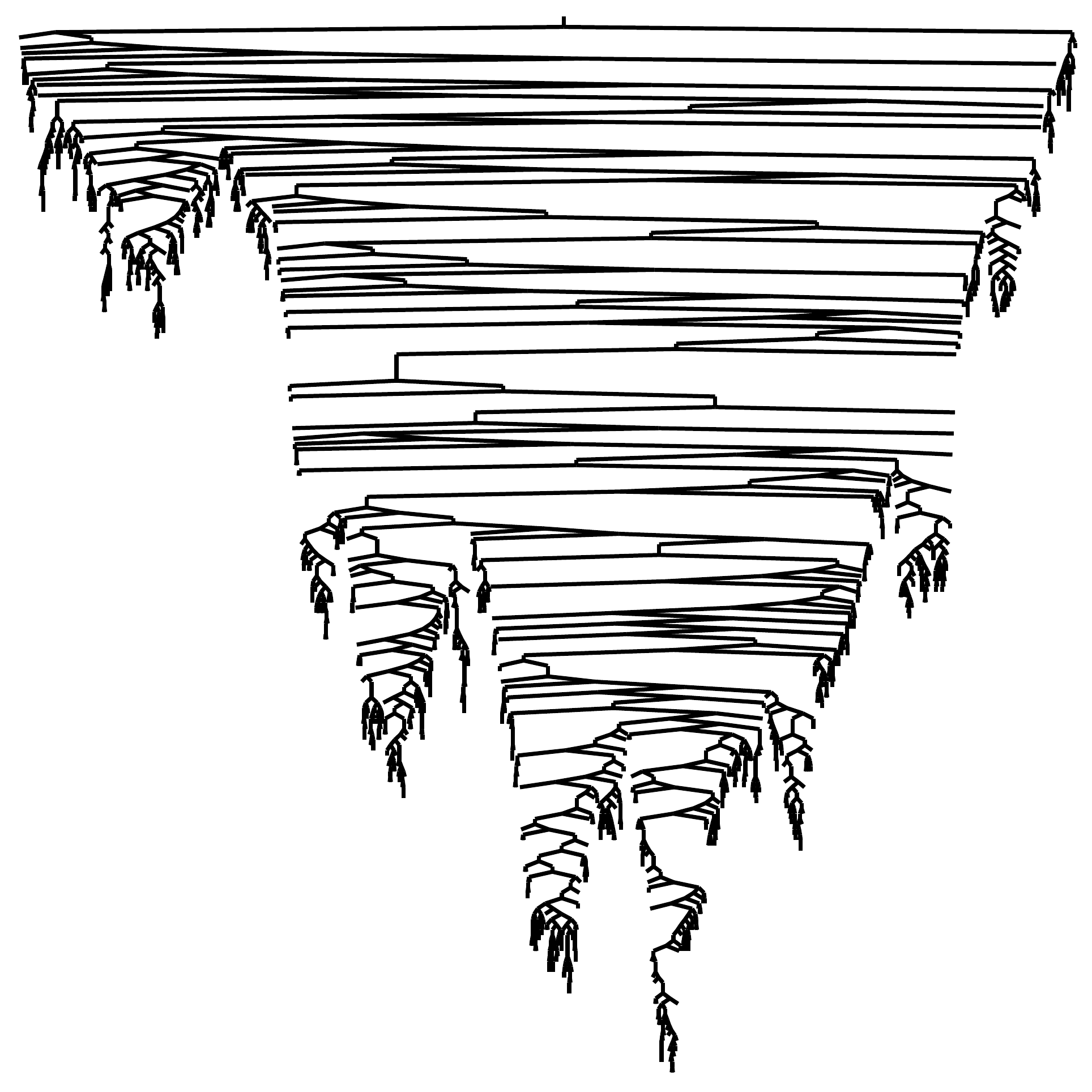}
   \caption{Three uniform random lambda terms (according to unary representation) of size
respectively 2098, 2541, 2761.} \label{largeterms}
   \end{center}
\end{figure}

First, notice that $X_N$ has the same distribution as the number of leaves
in $\mathcal{L}_{N,N}$. This can be seen by setting up equations for the bivariate generating
functions $L_{N,k}(z,u)$ where the second variable is associated to the leaves. Then, one observes
that the radius of convergence $\rho_N(u)$ of all functions is the same and depends only on $N$.
As $L_{N,0}$ converges uniformly to $L_0(z)$ the same applies to $\rho_N(u)$. Eventually, this
implies that all moments of $X_N$ are asymptotically equal to the corresponding moments of $X$, 
the number of leaves in a uniform random closed term, conditioned on the size. 
It follows that $X$ is asymptotically Gaussian as it
is the case for the number of leaves in $\mathcal{L}_{N,N}$.

\begin{theorem}
Let $X_n$ the number of variables in a lambda term of size $n$. Then $X_n$ is asymptotically 
Gaussian with $\mathbb{E}(X_n)\sim\alpha n$ and $\mathbb{V}ar(X)\sim \alpha n$ where 
$\alpha=\dfrac{1-\rho}{2}\approx 0.3522011287$. 
\end{theorem}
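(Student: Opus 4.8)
The plan is to exploit the link, already established in the preceding discussion, between the random variable $X_n$ (number of variables in a uniformly random closed lambda term of size $n$) and the corresponding parameter in the superclasses $\cL_{N,0}$ and ultimately in $\cL_{N,N}$. First I would introduce the bivariate generating functions $L_{N,k}(z,u)=\sum_{n,j} L_{N,k,n,j} z^n u^j$, where $j$ marks the number of variables (leaves). From the specification leading to \eqref{boltz} these satisfy the same system as \eqref{boltz} except that every occurrence of a leaf contributes a factor $u$; in particular the last equation becomes $L_{N,N}(z,u)=zL_{N,N}(z,u)+zL_{N,N}(z,u)^2+\frac{uz}{1-z}$, which is an explicit quadratic in $L_{N,N}$, and the others are obtained by solving quadratics successively from bottom to top, exactly as in the proof of Proposition~\ref{bnzero}. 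The key structural observation is that in each of these quadratic solves the discriminant that produces the dominant (square-root) singularity is governed by the bottom equation; hence the radius of convergence $\rho_N(u)$ of $z\mapsto L_{N,k}(z,u)$ is the same for all $k=0,1,\dots,N$ and coincides with the dominant singularity of $L_{N,N}(z,u)$, which is the smallest positive root of the explicit polynomial coming from the quadratic for $L_{N,N}(z,u)$.

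Next I would set up the standard singularity-perturbation machinery. For $u$ in a small real neighbourhood of $1$, $L_{N,N}(z,u)$ has a square-root singularity at $z=\rho_N(u)$, and because the singular expansion propagates up the system through analytic quadratic solves (as in Proposition~\ref{bnzero}), so does $L_{N,0}(z,u)$. Then Drmota–Lalley–Woods type analysis, or directly the quasi-powers theorem of Hwang applied via the movable singularity $\rho_N(u)$, yields that $X_{N,n}$ (the number of leaves in a random term from $\cL_{N,0}$ conditioned on size $n$) is asymptotically Gaussian with mean $\sim \mu_N n$ and variance $\sim \sigma_N^2 n$, where $\mu_N=-\rho_N'(1)/\rho_N(1)$ and $\sigma_N^2$ is the usual second-order expression in $\rho_N$ and its first two derivatives at $u=1$. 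Here $\rho_N(1)=\rho$, the root of $1-3z-z^2-z^3$. I would then note, as the text already remarks, that $X_{N,n}$ has the same distribution as the number of leaves in $\cL_{N,N}$ conditioned on size $n$ — this follows because marking leaves and then looking at a fixed output size $n$ does not see the difference between $\cL_{N,0}$ and $\cL_{N,N}$ once $N$ is fixed; more precisely, the relevant univariate-in-$u$ singular data $\rho_N(u)$ is literally the same for all levels.

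To pass from $\cL_{N,0}$ to genuine closed terms $\cL_0$, I would invoke the uniform (coefficient-wise) convergence $L_{N,0}(z)\to L_0(z)$ established earlier, upgraded to the bivariate setting: $L_{N,0}(z,u)\to L_0(z,u)$ coefficient-wise and, for $u$ near $1$, uniformly on $[0,\rho]$, with $\rho_N(u)\to\rho(u)$, the dominant singularity of $L_0(z,u)$. Since all asymptotic moments of $X_n$ are determined by $\rho(u)$ and its derivatives at $u=1$, and these are limits of the corresponding quantities for $X_{N,n}$, the limiting Gaussian law and the mean/variance constants transfer from $X_{N,n}$ to $X_n$. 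Finally I would identify the constant: from $L_0(z,u)$ one has $\rho(1)=\rho$ with $1-3\rho-\rho^2-\rho^3=0$, and a short implicit-differentiation computation of $\rho'(1)$ — using the system \eqref{boltz} with leaves marked and the identity $1-2\rho L_\infty(\rho)=\rho$ already used in the paper — gives $\mu=-\rho'(1)/\rho(1)=(1-\rho)/2$. An analogous but slightly longer computation shows $\sigma^2$ also equals $(1-\rho)/2$; the coincidence of mean and variance coefficients is not a deep fact here but a feature of this particular polynomial, and I would verify it by the explicit second-derivative formula rather than appeal to any general principle.

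The main obstacle I anticipate is making the transfer of the \emph{distributional} limit (not merely the moments) from $\cL_{N,0}$ to $\cL_0$ fully rigorous: coefficient-wise convergence of bivariate generating functions does not by itself give convergence of the conditioned distributions $X_{N,n}\Rightarrow X_n$ for fixed large $n$, and one genuinely needs the quantitative control — namely that $\rho_N(u)$ and its first two $u$-derivatives at $u=1$ converge to those of $\rho(u)$ at a rate that beats the error terms in the quasi-powers estimate, uniformly in a complex neighbourhood of $u=1$. This requires extending the exponentially-fast convergence $b_{N,0}\to b_0$ from Proposition~\ref{bnzero} and the proposition following it to a bivariate statement (a perturbation of that argument with $u$ as an inert parameter), together with an appeal to analyticity of $\rho_N(u)$ to promote real convergence to convergence on a complex disc around $u=1$; once that is in place, the method of moments (all moments of $X_{N,n}$ converge to those of a Gaussian, and these in turn converge, as $N\to\infty$, to the moments determined by $\rho(u)$) closes the argument since the Gaussian is determined by its moments.
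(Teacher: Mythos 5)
Your overall strategy coincides with the paper's: mark the leaves with a second variable $u$ in the system \eqref{boltz}, observe that all $L_{N,k}(z,u)$ inherit one common movable square-root singularity $\rho_N(u)$ from the bottom equation, apply a quasi-powers argument to get a Gaussian limit law for the superclass $\cL_{N,0}$, and then transfer to $\cL_0$ using the convergence $L_{N,0}\to L_0$. You are in fact more careful than the paper, which only sketches this; in particular you correctly isolate the genuinely delicate point, namely that coefficientwise convergence of the bivariate functions does not by itself transfer the limit law, so one needs quantitative, $u$-uniform control of $\rho_N(u)$ and its first two derivatives at $u=1$.

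There is, however, a concrete step that fails: the identification of the constant. With $u$ marking the zeros, $L_\infty(z,u)=\frac{uz}{1-z}+zL_\infty(z,u)+zL_\infty(z,u)^2$, so $\rho(u)$ is the smallest positive root of $(1-z)^3=4uz^2$. Implicit differentiation at $u=1$ gives
\[
\rho'(1)=\frac{-4\rho^2}{3(1-\rho)^2+8\rho},
\qquad
-\frac{\rho'(1)}{\rho(1)}=\frac{4\rho}{3(1-\rho)^2+8\rho}=\frac{1-\rho}{2+\rho}\approx 0.3068,
\]
where the simplification uses $(1-\rho)^2=4\rho^2/(1-\rho)$. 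This is \emph{not} $(1-\rho)/2\approx 0.3522$. The value $(1-\rho)/2$ is the limiting probability that the singular Boltzmann sampler draws a leaf at a given step, i.e.\ the proportion of leaves among the \emph{nodes} of the underlying unary--binary tree; but under natural counting the size $n$ also charges one unit per successor $S$, each leaf carries on average $\rho/(1-\rho)$ successors, so a node costs on average $1+\rho/2=(2+\rho)/2$ size units and the density of leaves per size unit is $\frac{1-\rho}{2}\cdot\frac{2}{2+\rho}=\frac{1-\rho}{2+\rho}$. A direct check confirms this: the cumulated number of zeros over all terms of sizes $1,2,3,4$ is $1,2,5,14$ against $1,2,4,9$ terms, with exact generating function $\frac{z}{(1-z)}\bigl((1-z)^2-\frac{4z^2}{1-z}\bigr)^{-1/2}$, whose coefficient asymptotics give mean $\frac{1-\rho}{2+\rho}\,n$, not $\frac{1-\rho}{2}\,n$. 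So the implicit-differentiation computation you propose, carried out correctly, contradicts the constant you claim it yields (and which the theorem states); the asserted equality $\mathbb{V}\mathrm{ar}(X_n)\sim\alpha n$ with the same $\alpha$ must be re-examined for the same reason. Everything else in your argument survives with the corrected constant, but as written the final step asserts an identity that is false.
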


For sampling lambda terms several approaches were use. In \cite{Wa05} an algorithm using
memoization techniques was developed. The Haskell sampler in \cite{GL13} which used recurrences
for different classes of lambda terms performed better and could generate terms of size 1500
within a few hours. 

Our sampler has the same complexity as the classical Boltzmann sampler for trees. In particular, it
is linear in approximate size. Grygiel and Lescanne \cite{GL15} already used a Boltzmann sampler
using a rejection procedure based on the results presented in \cite{GG16}. They were able to 
generate terms up to size 100 000. Our procedure refines the rejection procedure and exploits the
fast convergence of the $\cL_{N,0}$ to $\cL_0$. Using a standard laptop with a CPU i7-5600U
cadenced at 2.6 GHz, it is possible to draw a lambda term of size in the range [1 000 000,2 000
000]. The fairly large terms depicted in Figure~\ref{largeterms} can be generated within a
second.

\bibliographystyle{plain}
\bibliography{bibliography}

\end{document}